\newcommand\numberthis{\addtocounter{equation}{1}\tag{\theequation}}
\newlength\myindent
\newcommand{\whcomm}[2]{{}{#2}}
\newcommand{\whfirrev}[2]{{\sf\color{purple} #1}{#2}}
\newcommand{\whsecrev}[2]{{}{#2}}
\newcommand{\whthirev}[2]{{}{#2}}
\newcommand{\kwcomm}[1]{{}}
\newcommand{\kw}[1]{{}}
\newcommand{\kwnote}[2]{{\sf\color{red} #1}{#2}}
\newcommand{\comm}[1]{}
\newcommand{\pacomm}[1]{{}}
\newcommand{\delete}[1]{}
\renewcommand{\theequation}{\arabic{equation}}
\newcommand{\Rmnum}[1]{\expandafter\@slowromancap\romannumeral #1@}
\newcommand{\inner}[3][]{{\left\langle #2,#3 \right\rangle_{#1}}}
\newtheorem{definition}{Definition}[section]
\newtheorem{theorem}{Theorem}[section]
\newtheorem{lemma}{Lemma}[section]
\newtheorem{assumption}{Assumption}[section]
\newtheorem{remark}{Remark}[section]
\newtheorem{proposition}{Proposition}[section]
\numberwithin{equation}{section}
\DeclareMathOperator{\T}{\mathrm{T}}
\DeclareMathOperator{\Hess}{\mathrm{Hess}}
\DeclareMathOperator{\grad}{\mathrm{grad}}
\DeclareMathOperator{\Exp}{\mathrm{Exp}}
\DeclareMathOperator{\id}{\mathrm{id}}
\DeclareMathOperator{\N}{\mathrm{N}}
\DeclareMathOperator{\E}{\mathrm{F}}
\DeclareMathOperator{\D}{\mathrm{D}}
\DeclareMathOperator{\dist}{\mathrm{dist}}
\DeclareMathOperator{\trace}{\mathrm{trace}}
\DeclareMathOperator{\sign}{\mathrm{sign}}
\DeclareMathOperator{\diag}{\mathrm{diag}}
\DeclareMathOperator{\St}{\mathrm{St}}
\DeclareMathOperator{\vvec}{\mathrm{vec}}
\begin{document}

\title{Riemannian Proximal Gradient Methods (extended version)\footnotetext{Authors  are listed alphabetically, and correspondence may be addressed to \texttt{wen.huang@xmu.edu.cn} (WH) and \texttt{kewei@fudan.edu.cn} (KW). \\WH was partially  supported by the Fundamental Research Funds for the Central Universities (NO. 20720190060). KW was partially  supported by the NSFC Grant 11801088 and the Shanghai Sailing Program 18YF1401600.}
}

\author[1]{Wen Huang}
\author[2]{Ke Wei}

\affil[1]{ School of Mathematical Sciences, Xiamen University, Xiamen, China.\vspace{.15cm}}
\affil[2]{School of Data Science, Fudan University, Shanghai, China.}

\maketitle


\begin{abstract}
In the Euclidean setting the proximal  gradient method and its accelerated variants are a class of efficient algorithms for  optimization problems with decomposable objective.
\whfirrev{}{In this paper, we develop a Riemannian proximal gradient method (RPG) and its accelerated variant (ARPG)  for similar problems but constrained on a manifold. The global convergence of RPG \whsecrev{}{is} established under mild assumptions, and the  $O(1/k)$ is also derived for RPG based on the notion of retraction convexity. If  assuming the objective function obeys the Rimannian Kurdyka-\L ojasiewicz (KL) property, it is further shown that the sequence generated by RPG converges to a single stationary point.
 As in the Euclidean setting, local convergence rate  can be established if the objective function satisfies the Riemannian KL property with an exponent. 
 Moreover, we \whsecrev{}{show} that the restriction of a semialgebraic function onto  the Stiefel manifold  satisfies the Riemannian KL property, which covers for example the well-known sparse PCA problem. }
\whsecrev{}{Numerical experiments on  random and synthetic data are conducted to test  the performance of the  proposed RPG and ARPG.}
\end{abstract}

\section{Introduction}

We consider the problem of minimizing a sum of two functions on a Riemannian manifold,
\begin{equation} \label{RPG:prob1}
\min_{x \in \mathcal{M}} F(x) = f(x) + g(x),
\end{equation}
where $\mathcal{M}$ is a finite dimensional Riemannian manifold, $f$ is  differentiable, and $g$ is continuous but could be nonsmooth. This problem arises from a wide range of applications, such as sparse principal component analysis~\cite{JoTrUd2003a,GHT2015}, sparse blind deconvolution~\cite{ZLKCPW2017}, and unsupervised feature selection~\cite{Tang2012Unsupervised}. 

In the case when the manifold constraint is dropped (i.e., $\mathcal{M}$ is a Euclidean space), the \whcomm{}{nonsmooth} optimization problem~\eqref{RPG:prob1} have been extensively investigated and many algorithms have developed and analysed, see e.g.,~\cite{Darzentas1983Problem,Nesterov83,LL2015,GL2016,Beck2017} and references therein. Among them are a family of simple yet effective methods known as  
proximal gradient  method and its accelerated variants. Starting from an initial guess $x_0$, the proximal gradient method updates the estimate of a minimizer via
\begin{align}
&\left\{
\begin{array}{ll}
	d_k = \arg\min_{p \in \mathbb{R}^{n}} \inner[2]{\nabla f(x_k)}{p} + \frac{L}{2} \|p\|^2_{2} + g(x_k + p), & \hbox{(Proximal mapping\footnotemark)}  \\
	x_{k+1} = x_k + d_k, & \hbox{(Update iterates)} \label{RPG:EPG}
\end{array}
\right.
\end{align}
\whcomm{}{where $\inner[2]{u}{v} = u^T v$ and $\|u\|_2^2 = \inner[2]{u}{u}$.}
The intuition behind this method is to simplify the objective function in each iteration  by replacing the differentiable term $f$ with its first order approximation around the current estimate.
\footnotetext{The commonly-used update expression is $x_{k+1}=\arg\min_x\langle\nabla f(x_k),x-x_k\rangle_2+\frac{L}{2}\|x-x_k\|_2^2+g(x)$. We reformulate it equivalently for the convenience of the Riemannian formulation given later.}
In many practical settings, the proximal mapping either has a closed-form solution or can be solved efficiently. Thus,
the algorithm has low per iteration cost and    is applicable for large-scale problems. Furthermore, under the assumptions that $f$ is convex, Lipschitz-continuously differentiable with Lipschitz constant $L$, $g$ is convex, and $F$ is coercive, the proximal gradient method converges  on the order of $O(1/k)$~\cite{Beck2009,Beck2017}.
Note that the convergence rate of the proximal gradient method is not optimal and algorithms achieving the optimal $O(1/k^2)$ \cite{Darzentas1983Problem,Nesterov83} convergence rate can be developed based on certain acceleration schemes.
One of the representative  accelerated proximal gradient methods is the fast iterative shrinkage-thresholding algorithm (FISTA, \cite{Beck2009}):
\begin{align}
&\quad\; \hbox{Initial iterate: $x_0$ and let $y_0 = x_0$, $t_0 = 1$}, \nonumber \\
&\left\{
\begin{array}{ll}
	d_k = \arg\min_{p \in \mathbb{R}^{n}} \inner[2]{\nabla f(y_k)}{p} + \frac{L}{2} \|p\|_F^2 + g(y_k + p), \\
	x_{k+1} = y_k + d_k, \\
	t_{k+1} = \frac{1 + \sqrt{4 t_k^2 + 1}}{2}, \\
	y_{k+1} = x_{k+1} + \frac{t_k - 1}{t_{k+1}} (x_{k+1} - x_k).
\end{array}
\right. \label{RPG:EAPG}
\end{align}
FISTA uses the Nesterov momentum technique to generate an auxiliary sequence $\{y_k\}$ and has been proven to converge on the order of $O(1/k^2)$  \cite{Beck2009}.

With the presence of the  manifold constraint, the \whcomm{}{nonsmooth} optimization problem~\eqref{RPG:prob1} becomes more challenging, and  only a few optimization methods have been proposed and analyzed.
\whcomm{}{When the cost function is assumed to be Lipschitz continuous, existing methods are mostly based on the notion of $\epsilon$-subgradient. The $\epsilon$-subgradient refers to the technique of using the gradients at nearby points  to estimate the subgradient at a given point, so no subgradient is computed explicitly. Specifically,} in~\cite{GH2015a} and~\cite{GH2015b}, Grohs and Hosseini come up two $\epsilon$-subgradient-based optimization methods using the line search  and trust region strategies, respectively. It is proved that any limit point of the sequence from the algorithms is a critical point. In~\cite{HUANG2013}, Huang generalizes a gradient sampling method to the Riemannian setting. This method is very  efficient for small-scale problems, but lacks convergence analysis. In~\cite{HU2017}, Hosseini and Uschmajew fill this gap and present a Riemannian gradient sampling method with convergence analysis. In~\cite{HHY2018}, Hosseini et al. propose a new Riemannian line search  method which combines the $\epsilon$-subgradient and quasi-Newton ideas.
\whcomm{}{When the cost function is further assumed to be convex, several algorithms with convergent rate analysis have been proposed.} For example, in~\cite{ZS2016}, Zhang and Sra analyze a subgradient-based Riemannian method and show that the cost function decreases to the optimal value on the order of $O(1/\sqrt{k})$. Note that the subgradient is explicitly needed in this method, which differs from the $\epsilon$-subgradient-based methods. In~\cite{FO2002}, Ferreira and Oliveira propose a Riemannian proximal point method and the $O(1/k)$ convergence rate of the method for the Hadamard manifold is established by Bento et al. in~\cite{BFM2017}. However, the Riemannian proximal point method relies on the existence of an efficient algorithm for its subproblem in~\cite[(24)]{FO2002}, and no such algorithms or instances exist as far as we know. \kwcomm{need to be careful if reviewers argue this is also the case for the algorithms in this paper} \whcomm{[I think current version is fine. We have numerical experiments that show efficient implementations for the sparse PCA applications.]}{}
\whcomm{}{In addition, when $g=0$ and $F = f$ is Lipschitz-continuously differentiable, an accelerated first order method for convex functions on Riemannian manifolds has been analyzed in~\cite{LSCCJ2017} which shows that the optimal convergence rate $O(1/k^2)$ can be achieved.}

\kwcomm{Different objective functions for these literature work, should make it clear. Maybe literature review can be done according to different objective functions.}\whcomm{[WH: Done]}{}

Note that the aforementioned algorithms have not fully exploited the split structure of the cost function in \eqref{RPG:prob1}. In contrast,
 Chen et. al~\cite{CMSZ2019} recently present a Riemannian proximal gradient method which is suitable for the case when $\mathcal{M}$ is a submanifold of a Euclidean space. The algorithm is exactly parallel to \eqref{RPG:EPG}, and its global convergence has been established. The authors show  that the norm of the search direction computed from its Riemannian proximal mapping goes to zero. Moreover, if there exists a point such that the search direction from this point vanishes, then this point must be a critical point. Numerical experiments show that the proposed method is more efficient than existing methods based on the conventional constrained optimization framework such as SOC~\cite{LO2014} and PAMAL~\cite{CHY2016}.  Later on, Huang and Wei~\cite{HuaWei2019} show that any limit point of the sequence generated by the Riemannian proximal gradient method in~\cite{CMSZ2019} is indeed a critical point. Furthermore, they propose a Riemannian version of FISTA  with safeguard which exhibits the accelerated behavior over the Riemannian proximal gradient method. Nevertheless, no convergence rate analysis is presented there.

\whcomm{Our contribution:}{
The main contributions of this paper are summarized as follows. A Riemannian proximal gradient method (RPG) and its accelerated variant (ARPG)  are proposed and studied. These methods are based on a different Riemannian proximal mapping, compared to those in \cite{CMSZ2019,HuaWei2019}, which allows them to work for generic manifolds. 
It is proved that any accumulation point of RPG is a critical point under mild assumptions. \whfirrev{}{Based on a notion of retraction convexity on  Riemannian manifolds, we show that RPG has a $O(1/k)$ convergence rate. Furthermore, it is shown that the sequence generated by RPG converges to a single stationary point if the objective function satisfies the Riemannian KL property and local convergence rate can be given if the KL exponent is known. In particular, we have proved that the restriction of a semialgebraic function onto  the Stiefel manifold  satisfies the Riemannian KL property, and this result applies to the well-known sparse PCA problem. }
In addition, a practical Riemannian proximal gradient method, which shares the features of the Riemannian proximal gradient method (global convergence under mild conditions) and the Riemannian FISTA method (fast convergence \whfirrev{}{empirically}
), is derived. 
We then examine the performance of the proposed methods through two different optimization problems for sparse principle component analysis.
}

\whfirrev{}{
The Riemannian KL property is overall similar to the Euclidean KL, but with related notations defined on a manifold, see Definition~\ref{RPG:def:RKL}. To the best of our knowledge, 
the Riemannian KL property was first defined by Kurdyka in~\cite{KMP2000} for analytic manifolds and analytical functions. In~\cite{Lageman2007}, it was extended for analytic manifolds and differentiable $\mathcal{C}$-functions in an analytic-geometric category. 
In~\cite{BDLS2007}, a verifiable condition for a Riemannian KL property is given when the function on a manifold is differentiable, the manifold is an embedded submanifold of $\mathbb{R}^n$, and the Riemannian metric is endowed from the Euclidean metric. 
In~\cite{BBNOS2019}, the Riemannian KL property was used to analyze a Riemannian steepest descent method for computing a Riemannian center of mass on Hadamard manifold. The results about Riemnanian KL property for nonsmooth optimization on manifolds are still limited.
In~\cite{BCO2011}, a Riemannian generalization of KL property for nonsmooth functions is given and a Riemannian proximal point method is analyzed using the Riemannian KL property. In~\cite{Hosseini2017}, the Riemannian KL property is used to analyze an abstract subgradient method for manifold optimization. 
}


\whcomm{}{
This paper is organized as follows.
Notation and preliminaries on manifolds are given in Section~\ref{RPG:sect:NotationPreliminaries}. 
The Riemannian proximal gradient method together with its convergence analyses, are presented in Section~\ref{RPG:sect:RPG}. The accelerated Riemannian proximal gradient method and a practical variant  is described in Section~\ref{sec:ARPG}.
Numerical experiments are reported in Section~\ref{RPG:sect:NumExp}. This paper is concluded with potential future directions in Section~\ref{RPG:sect:Con}.
}


\section{Notation and Preliminaries on Manifolds} \label{RPG:sect:NotationPreliminaries}

The Riemannian concepts of this paper follow from the standard literature, e.g.,~\cite{Boo1986,AMS2008} and the  related notation  follows from~\cite{AMS2008}. A Riemannian manifold $\mathcal{M}$ is a manifold endowed with a Riemannian metric $(\eta_x, \xi_x) \mapsto \inner[x]{\eta_x}{ \xi_x} \in \mathbb{R}$, where $\eta_x$ and $\xi_x$ are tangent vectors in the tangent space of $\mathcal{M}$ at $x$. The induced norm in the tangent space at $x$ is denoted by $\|\cdot\|_x$ \whfirrev{}{or $\|\cdot\|$ when the subscript is clear from the context.}
The tangent space of the manifold $\mathcal{M}$ at $x$ is denoted by $\T_x \mathcal{M}$, and the tangent bundle, which is the set of all tangent vectors, is denoted by $\T \mathcal{M}$. A vector field is a function from the manifold to its tangent bundle, i.e., $\eta:\mathcal{M} \rightarrow \T \mathcal{M}: x\mapsto \eta_x$. \whfirrev{}{An open ball on a tangent space is denoted by $\mathcal{B}(\eta_x, r) = \{\xi_x \in \T_{x} \mathcal{M} \mid \|\xi_x - \eta_x\|_x < r\}$. 
An open ball on the manifold is denoted by $\mathbb{B}(x, r) = \{ y \in \mathcal{M} \mid \dist(y, x) < r \}$, where $\dist(x, y)$ denotes the distance between $x$ and $y$ on $\mathcal{M}$. 
}

A retraction is a \whfirrev{}{smooth ($C^\infty$)} mapping from the tangent bundle to the manifold such that (i)~$R(0_x) = x$ for all $x \in \mathcal{M}$, where $0_x$ denotes the origin of $\T_x \mathcal{M}$, and (ii) $\frac{d}{d t} R(t \eta_x) \vert_{t = 0} = \eta_x$ for all $\eta_x \in \T_x \mathcal{M}$. The domain of $R$ does not need to be the entire tangent bundle. However, it is usually the case in practice, and in this paper we assume  $R$  is always well-defined. Moreover, $R_x$ denotes the restriction of $R$ to $\T_x \mathcal{M}$, \whcomm{}{see Figure~\ref{fig:RandVT} for an illustration of  $R_x$}. \whfirrev{}{For any $x \in \mathcal{M}$, there always exists a neighborhood of $0_x$ such that the mapping $R_x$ is a diffeomorphism in the neighborhood.} An important retraction is the exponential mapping, denoted by $\mathrm{Exp}$, satisfying $\mathrm{Exp}_x(\eta_x) = \gamma(1)$, where $\gamma(0) = x$, $\gamma'(0) = \eta_x$, and $\gamma$ is the geodesic passing through $x$. \whcomm{}{In a Euclidean space, the most common retraction is the exponential mapping given by addition $\Exp_x(\eta_x) = x + \eta_x$.} 

A vector transport $\mathcal{T}: \T \mathcal{M} \oplus \T \mathcal{M} \rightarrow \T \mathcal{M}: (\eta_x, \xi_x) \mapsto \mathcal{T}_{\eta_x} \xi_x$ associated with a retraction $R$ is a \whfirrev{}{smooth ($C^\infty$)} mapping such that, for all $(x, \eta_x)$ in the domain of $R$ and all $\xi_x \in \T_x \mathcal{M}$, it holds that (i) $\mathcal{T}_{\eta_x} \xi_x \in \T_{R(\eta_x)} \mathcal{M}$ and (ii) $\mathcal{T}_{\eta_x}$ is a linear map, \whcomm{}{see Figure~\ref{fig:RandVT} for an illustration of a vector transport $\mathcal{T}_{\eta_x}$}. If $R_x^{-1}(y)$ is well-defined for $x, y \in \mathcal{M}$, then the vector transport $\mathcal{T}_{R_x^{-1}(y)}$ is also denoted by $\mathcal{T}_{x \rightarrow y}$.
An isometric vector transport $\mathcal{T}_{\mathrm{S}}$ additionally satisfies $\inner[R_x(\eta_x)]{\mathcal{T}_{\mathrm{S}_{\eta_x}} \xi_x}{\mathcal{T}_{\mathrm{S}_{\eta_x}} \zeta_x} = \inner[x]{\xi_x}{\zeta_x}$, for any $\eta_x, \xi_x, \zeta_x \in \T_x \mathcal{M}$.
\whcomm{}{An important vector transport is the parallel translation, denoted  $\mathcal{P}$. The basic idea behind the parallel translation is to move a tangent vector along a given curve on a manifold ``parallelly''. We refer to~\cite{AMS2008} for its rigorous definition. Note that parallel translation is an isometric vector transport.}
The vector transport by differential retraction $\mathcal{T}_R$ is defined by $\mathcal{T}_{R_{\eta_x}} \xi_x = \frac{d}{d t} R_{x}(\eta_x + t \xi_x) \vert_{t = 0}$. The adjoint operator of a vector transport $\mathcal{T}$, denoted by $\mathcal{T}^\sharp$, is a vector transport satisfying $\inner[y]{\xi_y}{\mathcal{T}_{\eta_x} \zeta_x} = \inner[x]{\mathcal{T}_{\eta_x}^\sharp \xi_y}{\zeta_x}$ for all $\eta_x, \zeta_x \in \T_x \mathcal{M}$ and $\xi_y \in \T_y \mathcal{M}$, where $y = R_x(\eta_x)$. \whcomm{}{The inverse operator of a vector transport, denoted $\mathcal{T}^{-1}$, is a vector transport satisfying $\mathcal{T}_{\eta_x}^{-1} \mathcal{T}_{\eta_x} = \id$ for all $\eta_x \in \T_x \mathcal{M}$, where $\id$ is the identity operator. In the Euclidean setting, a vector transport $\mathcal{T}_{\eta_x}$ for any $\eta_x \in \T_x \mathcal{M}$ can be represented by a matrix  (the commonly-used vector transport is the identity matrix). Then the adjoint and inverse operators of a vector transport are given by the transpose and inverse of the corresponding matrix, respectively. \kw{not clear to me what last sentence means, may just remove it to avoid confusion}} \whcomm{WH10[Done, symmetry to transpose]}{}

\begin{figure}
\centering
\scalebox{0.7}{
  \setlength{\unitlength}{1bp}%
  \begin{picture}(219.70, 178.15)(0,0)
  \put(0,0){\includegraphics{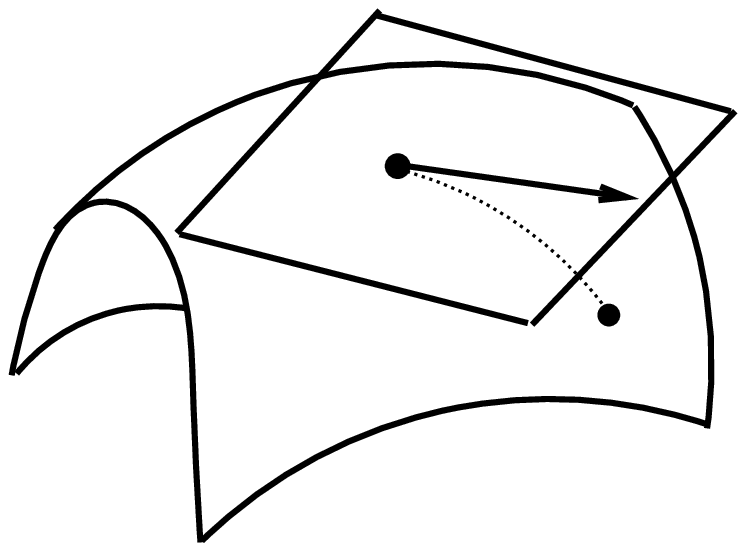}}
  \put(74.18,33.70){\fontsize{14.23}{17.07}\selectfont $\mathcal{M}$}
  \put(102.98,103.21){\fontsize{14.23}{17.07}\selectfont $x$}
  \put(169.61,112.53){\fontsize{14.23}{17.07}\selectfont $\eta_x$}
  \put(118.55,161.37){\fontsize{14.23}{17.07}\selectfont $\mathrm{T}_x \mathcal{M}$}
  \put(163.50,56.51){\fontsize{14.23}{17.07}\selectfont $R_x(\eta_x)$}
  \end{picture}%
  }
 \hspace{3em}
\scalebox{.7}{
  \begin{picture}(220.04, 178.15)(0,0)
  \put(0,0){\includegraphics{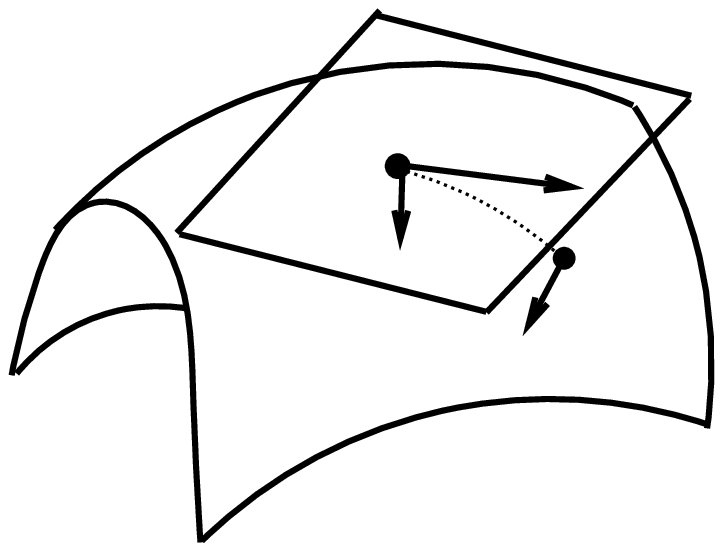}}
  \put(74.18,33.70){\fontsize{14.23}{17.07}\selectfont $\mathcal{M}$}
  \put(102.98,103.21){\fontsize{14.23}{17.07}\selectfont $x$}
  \put(167.61,113.53){\fontsize{14.23}{17.07}\selectfont $\eta_x$}
  \put(118.55,161.37){\fontsize{14.23}{17.07}\selectfont $\mathrm{T}_x \mathcal{M}$}
  \put(164.78,73.51){\fontsize{14.23}{17.07}\selectfont $R_x(\eta_x)$}
  \put(122.14,88.72){\fontsize{14.23}{17.07}\selectfont $\xi_x$}
  \put(126.29,54.27){\fontsize{14.23}{17.07}\selectfont $\mathcal{T}_{\eta_x} \xi_x$}
  \end{picture}%
}
  \caption{(Left) Retraction; (Right) Vector transport \kw{a) left: $\eta\rightarrow\eta_x$; b) different line width and fontsize in the two plots} \whcomm{WH10[Done]}{}}
  \label{fig:RandVT}
\end{figure}

\whfirrev{}{
The Riemannian gradient of a function $h:\mathcal{M} \rightarrow \mathbb{R}$
, denote  $\grad h(x)$, is the unique tangent vector satisfying:
\begin{equation*}
\D h(x) [\eta_x] = \inner[x]{\eta_x}{\grad h(x)}, \forall \eta_x \in \T_x \mathcal{M},
\end{equation*}
where $\D h(x) [\eta_x]$ denotes the directional derivative along the direction $\eta_x$.
The Riemannian Hessian of $h$ at $x$, denoted by $\Hess h(x)$, is a linear operator on $\T_x \mathcal{M}$ satisfying
\[
\Hess h(x) [\eta_x] = \overline{\nabla}_{ \eta_x } \grad h(x), \qquad \forall \eta_x \in \T_x \mathcal{M},
\]
where $\Hess h(x) [\eta_x]$ denotes the action of $\Hess h(x)$ on a tangent vector $\eta_x \in \T_x \mathcal{M}$, and $\overline{\nabla}$ denotes the Riemannian affine connection. Roughly speaking, an affine connection generalizes the concept of a directional derivative of a vector field and we refer to ~\cite[Section~5.3]{AMS2008} for its rigorous definition. 
}

If $h$ is  Lipschitz continuous but not differentiable, then the Riemannian version of generalized subdifferential defined in~\cite{HHY2018} is used. Specifically, since $\hat{h}_x = h \circ R_x$ is a Lipschitz continuous function defined on a Hilbert space $\T_x \mathcal{M}$, the Clarke generalized directional derivative at $\eta_x \in \T_x \mathcal{M}$, denoted by $\hat{h}_x^\circ(\eta_x; v)$, is defined by $\hat{h}_x^\circ(\eta_x; v) = \lim_{\xi_x \rightarrow \eta_x} \sup_{t \downarrow 0} \frac{\hat{h}_x(\xi_x + t v) - \hat{h}_x(\xi_x)}{t}$, where $v \in \T_x \mathcal{M}$. The generalized subdifferential of $\hat{h}_x$ at $\eta_x$, denoted  $\partial \hat{h}_x(\eta_x)$, is defined by $\partial \hat{h}_x(\eta_x) = \{\eta_x \in \T_x \mathcal{M} \mid \inner[x]{\eta_x}{v} \leq \hat{h}_x^\circ(\eta_x; v) \hbox{ for all } v \in \T_x \mathcal{M}\}$. The Riemannian version of the Clarke generalized direction derivative of $h$ at $x$ in the direction $\eta_x \in \T_x \mathcal{M}$, denoted  $h^\circ (x; \eta_x)$, is defined by $h^\circ (x; \eta_x) = \hat{h}_x^\circ (0_x; \eta_x)$. The generalized subdifferential of $h$ at $x$, denoted  $\partial h(x)$, is defined as $\partial h(x) = \partial \hat{h}_x(0_x)$. Any tangent vector $\xi_x \in \partial h(x)$ is called a \whfirrev{}{Riemannian} subgradient of $h$ at $x$.

\whfirrev{}{
A vector field $\eta$ is called Lipschitz continuous if there exist \whsecrev{}{a positive injectivity radius $i(\mathcal{M})$ and} a positive constant $L_v$ such that for all $x, y \in \mathcal{M}$ with $\dist(x, y) < i(\mathcal{M})$, it holds that
\begin{equation} \label{RPG:VFLipCon}
\| \mathcal{P}_{\gamma}^{0 \leftarrow 1} \eta_y - \eta_x \|_x \leq L_v \dist(y, x),
\end{equation}
where $\gamma$ is a geodesic with $\gamma(0) = x$ and $\gamma(1) = y$, \whsecrev{}{the injectivity radius $i(\mathcal{M})$ is defined by} $i(\mathcal{M}) = \inf_{x \in \mathcal{M}} i_x$ and $i_x = \sup \{\epsilon > 0 \mid \Exp_x\vert_{\mathbb{B}(x, \epsilon)} \hbox{ is a diffeomorphism} \}$. \whsecrev{}{Note that for any compact manifold, the injectivity radius is positive~\cite[Lemma~6.16]{Lee2018} .} A vector field $\eta$ is called locally Lipschitz continuous if for any compact subset $\bar\Omega$ of $\mathcal{M}$, there exists a positive constant $L_v$ such that for all $x, y \in \bar\Omega$ with $\dist(x, y) < i({\bar\Omega})$, inequality~\eqref{RPG:VFLipCon} holds. A function on $\mathcal{M}$ is called (locally) Lipschitz continuous differentiable if the vector field of its gradient is (locally) Lipschitz continuous.
}

\whfirrev{}{
Let $\tilde{\Omega}$ be a subset of $\mathcal{M}$. If there exists a positive constant $\varrho$  such that, for all $y \in \tilde{\Omega}, \tilde{\Omega} \subset R_y(\mathcal{B}(0_y, \varrho))$ and $R_y$ is a diffeomorphism on $\mathcal{B}(0_y, \varrho)$, \kwcomm{not very clear what it means here}\whcomm{[It is a definition of a set. For any two points in this set, the inverse of the retraction is well-defined.]}{} then we call \whfirrev{}{$\tilde{\Omega}$} a totally retractive set with respect to $\varrho$. 
The existence of $\tilde{\Omega}$ can be shown along the lines of~\cite[Theorem~3.7]{dC92}, i.e., given any $x \in \mathcal{M}$, there exists a neighborhood of $x$ which is a totally retractive set.
}

In a Euclidean space, the Euclidean metric is denoted by $\inner[\E]{\eta_x}{ \xi_x}$, where $\inner[\E]{\eta_x}{ \xi_x}$  is  equal to the summation of the entry-wise products of $\eta_x$ and $\xi_x$, such as $\eta_x^T \xi_x$ for vectors and $\trace(\eta_x^T \xi_x)$ for matrices. The induced Euclidean norm is denoted by $\|\cdot\|_{\mathrm{F}}$. \whcomm{}{For any matrix $M$, the spectral norm is denoted by $\|M\|_2$. For any vector $v \in \mathbb{R}^n$, the $p$-norm, denoted $\|v\|_p$, is equal to $\left(\sum_{i = 1}^n |v_i|^p \right)^{\frac{1}{p}}$.}
\whsecrev{}{In this paper, $\mathbb{R}^n$ does not only refer to a vector space, but also can refer to a matrix space or a tensor space.}

\kwcomm{maybe more concise to use notation without subscript to denote the Euclidean case} \whcomm{[I preferred to omit the subscript for the commonly-used norm in this paper, which is the induced norm. But I always include subscript in this version.]}{}

\section{A Riemannian Proximal Gradient Method} \label{RPG:sect:RPG}

The Riemannian proximal gradient method proposed in this paper is stated in Algorithm~\ref{RPG:a1}.  
In each iteration, the algorithm first computes a search direction by solving a proximal subproblem on the tangent space at the current estimate and then a new estimate is obtained through the application of the retraction. Steps~\ref{RPG:a1:st1} and~\ref{RPG:a1:st2} are a generalization of the proximal mapping and the iterate update formula in~\eqref{RPG:EPG}, repectively.
The discussion on solving the Riemannian proximal mapping \eqref{RPG:subproblem2} will be deferred to Section~\ref{RPG:sect:subproblem}, after the presentation of the convergence analysis. \kwcomm{not sure should point out the difference with Ma's method here or in Section~\ref{RPG:sect:subproblem}} \whcomm{[I prefer to discuss Ma's method later in Section 3.3, because I prefer to make each section independent, i.e., this section just focuses on our method.]}{}

\begin{algorithm}[H]
\caption{Riemannian Proximal Gradient Method (RPG)}
\label{RPG:a1}
\begin{algorithmic}[1]
\Require Initial iterate $x_0$; a positive constant $\tilde{L} \whfirrev{}{> L}$; 
\For {$k = 0, \ldots$}
\State Let $\ell_{x_k}(\eta) = \inner[x_k]{\grad f(x_k)}{\eta} + \frac{\tilde{L}}{2} \|\eta\|_{x_k}^2 + g(R_{x_k}(\eta))$;
\State \label{RPG:a1:st1}
Find $\eta_{x_k}^*{\in\T_{x_k}\mathcal{M}}$ such that \kwcomm{(3.1) not right way to express a stationary point} \whcomm{[Done]}{}
\begin{align}
\eta_{x_k}^* \hbox{ is a \whfirrev{}{stationary point} of } \ell_{x_k}(\eta) ~{\hbox{ on }\T_{x_k}\mathcal{M}} \hbox{ and }  \ell_{x_k}(0) \geq \ell_{x_k}(\eta_{x_k}^*); \label{RPG:subproblem2}
\end{align}
%
\State \label{RPG:a1:st2} $x_{k+1} = R_{x_k}(\eta_{x_k}^*)$;
\EndFor
\end{algorithmic}
\end{algorithm}



\subsection{Global Convergence Analysis}

In the Euclidean setting, the global convergence of the proximal gradient method is established under the assumptions that $f$ is $L$-smooth and $F$ is coercive,
\whcomm{}{
where a continuously differentiable function $f:\mathbb{R}^n \rightarrow \mathbb{R}$ is called $L$-smooth if
$$
f(x) \leq f(y) + \inner[\E]{x - y}{\nabla f(y)} + \frac{L}{2} \|x - y\|_{\E}^2 \hbox{ for all $x, y \in \mathbb{R}^n$,}
$$
and $F$ is called coercive if $F(x) \rightarrow \infty$ as $\|x\| \rightarrow \infty$, see the definitions in e.g.,~\cite{Beck2017}.
}
Similar assumptions will be made for the Riemannian setting, where the coercive property is replaced by compactness of the sublevel set.
\begin{assumption} \label{RPG:as10}
The function $F$ is bounded from below and the sublevel set $\Omega_{x_0} = \{x \in \mathcal{M} \mid F(x) \leq F(x_0)\}$ is compact.
\end{assumption}

In Definition~\ref{RPG:def:Lsmooth}, we generalize the $L$-smoothness to the Riemannian setting and define a notion of $L$-retraction-smooth.
\kwcomm{history about retraction-smooth} \whcomm{[Done]}{}
\begin{definition} \label{RPG:def:Lsmooth}
A function $h:\mathcal{M} \rightarrow \mathbb{R}$ is called $L$-retraction-smooth with respect to a retraction $R$ in $\mathcal{N} \subseteq \mathcal{M}$ if for any $x \in \mathcal{N}$ and any $\mathcal{S}_x \subseteq \T_x \mathcal{M}$ such that $R_x(\mathcal{S}_x)\subseteq \mathcal{N}$, we have that 
\begin{equation} \label{RPG:e65}
\whfirrev{}{
h (R_x(\eta)) \leq h(x) + \inner[x]{\grad h(x)}{\eta} + \frac{L}{2} \|\eta\|_x^2,\quad \forall \eta \in \mathcal{S}_x.
}
\end{equation}
\end{definition}
\whcomm{[WH10]}{A stronger version of~\eqref{RPG:e65}, which assumes 
\begin{equation} \label{RPG:e92}
\left|h(R_x(\eta)) - h(x)-\inner[x]{\grad h(x)}{\eta} \right| \leq \frac{L}{2} \|\eta\|_x^2,
\end{equation}
has been used in~\cite[Assumption~2.6]{BAC2018}.}
\whcomm{}{In addition, if we choose the retraction to be the exponential mapping, \whcomm{WH10[Definition~\ref{RPG:def:Lsmooth}]}{inequality~\eqref{RPG:e65}} also implies  $h$ is  geodesically $L$-smooth~\cite{ZS2016,LSCCJ2017}, that is,
\begin{equation} \label{RPG:e48}
h(y)\leq h(x)+\inner[x]{\grad h(x)}{\Exp_x^{-1}(y)}+\frac{L}{2} \|\Exp_x^{-1}(y)\|_x^2.
\end{equation}
}

\begin{assumption} \label{RPG:as3}
The function $f$ is $L$-retraction-smooth with respect to the retraction $R$ in the sublevel set $\Omega_{x_0}$.
\end{assumption}

\whfirrev{}{
It has been shown in~\cite[Lemma~2.7]{BAC2018} that if $\mathcal{M}$ is a compact Riemannian submanifold of a Euclidean space $\mathbb{R}^n$, the retraction $R$ is globally defined, $f:\mathbb{R}^n \rightarrow \mathbb{R}$ is $L$-smooth in the convex hull of $\mathcal{M}$, then the inequality~\eqref{RPG:e92} holds with $\mathcal{N} = \mathcal{M}$. By following the lines in the proofs of~\cite[Lemma~2.7]{BAC2018}, one can show that the conclusion still holds if the metric of $\mathcal{M}$ is not endowed from the Euclidean space.\footnote{\whfirrev{}{Such result can be obtained by noting (i) $\D f(x)[\eta] = \inner[\E]{P_{\T_x \mathcal{M}} \nabla f(x)}{\eta} = \inner[x]{\grad f(x)}{\eta}$ for~\cite[(B.2)]{BAC2018}, and (ii) there exists a constant $\alpha > 0$ such that $\|\eta\|_{\E} \leq \alpha \|\eta\|_x$ for all $x \in \mathcal{M}$ by smoothness of the Riemannian metric and compactness of $\mathcal{M}$. }}
}

Lemma~\ref{RPG:le3} shows that RPG is a descent algorithm. \whsecrev{}{It  is  worth noting  that  the key difference between Lemma~\ref{RPG:le3} and  the descent property established in~\cite[Lemma~5.2]{CMSZ2019} for the Riemannian proximal gradient method therein  is that Lemma~\ref{RPG:le3} does not require convexity of $g$ whereas~\cite[Lemma~5.2]{CMSZ2019} does. }
\begin{lemma} \label{RPG:le3}
Suppose Assumption~\ref{RPG:as3} holds. Then the sequence $\{x_k\}$ generated by Algorithm~\ref{RPG:a1} satisfies
\begin{equation} \label{RPG:e9}
F(x_k) - F(x_{k+1}) \geq \beta \|\eta_{x_k}^*\|_{x_k}^2,
\end{equation}
where $\beta = (\tilde{L} - L) / 2$.
\end{lemma}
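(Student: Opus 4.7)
The plan is to derive the descent inequality by combining two facts: a bound on the change in $g$ coming from the sufficient decrease condition in~\eqref{RPG:subproblem2}, and a bound on the change in $f$ coming from the $L$-retraction-smoothness in Assumption~\ref{RPG:as3}. Adding these two bounds, the linear term $\inner[x_k]{\grad f(x_k)}{\eta_{x_k}^*}$ will cancel and what remains is a quadratic term with coefficient $(\tilde L - L)/2 = \beta$.

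First I would compute $\ell_{x_k}(0)$. Since $R_{x_k}(0) = x_k$ by the defining property of the retraction, we have $\ell_{x_k}(0) = g(x_k)$. Plugging this into the inequality $\ell_{x_k}(0) \ge \ell_{x_k}(\eta_{x_k}^*)$ guaranteed by~\eqref{RPG:subproblem2}, and using $x_{k+1} = R_{x_k}(\eta_{x_k}^*)$, yields
\begin{equation*}
g(x_k) - g(x_{k+1}) \;\ge\; \inner[x_k]{\grad f(x_k)}{\eta_{x_k}^*} + \frac{\tilde L}{2}\|\eta_{x_k}^*\|_{x_k}^2.
\end{equation*}
Next, I would invoke Assumption~\ref{RPG:as3} at the point $x_k$ with the tangent vector $\eta_{x_k}^*$, which gives
\begin{equation*}
f(x_{k+1}) \;\le\; f(x_k) + \inner[x_k]{\grad f(x_k)}{\eta_{x_k}^*} + \frac{L}{2}\|\eta_{x_k}^*\|_{x_k}^2,
\end{equation*}
and hence $f(x_k) - f(x_{k+1}) \ge -\inner[x_k]{\grad f(x_k)}{\eta_{x_k}^*} - \tfrac{L}{2}\|\eta_{x_k}^*\|_{x_k}^2$. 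Adding this to the bound on $g(x_k)-g(x_{k+1})$ cancels the inner-product terms and delivers the desired inequality with constant $\beta=(\tilde L - L)/2$.

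The only subtle issue, and I expect the main obstacle, is that Assumption~\ref{RPG:as3} only furnishes the smoothness inequality when the retracted iterate lies in $\Omega_{x_0}$, so I must justify that $x_k, x_{k+1}\in\Omega_{x_0}$ for every $k$ before applying the inequality. I would handle this by induction on $k$: for $k=0$, $x_0 \in \Omega_{x_0}$ by definition; for the inductive step, assuming $x_k\in\Omega_{x_0}$, consider the smooth path $t \mapsto R_{x_k}(t\eta_{x_k}^*)$ for $t\in[0,1]$ and let $t^* = \sup\{\,t\in[0,1]: R_{x_k}(s\eta_{x_k}^*)\in\Omega_{x_0}\text{ for all }s\in[0,t]\,\}$. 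Applying the argument above on $[0,t^*]$ (where the retraction-smoothness is valid) shows $F$ is monotonically non-increasing along this path, which by continuity of $F\circ R_{x_k}$ and closedness of $\Omega_{x_0}$ forces $t^*=1$, so $x_{k+1}\in\Omega_{x_0}$ and the bound~\eqref{RPG:e9} holds as stated.
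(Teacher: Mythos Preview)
Your core argument---using $\ell_{x_k}(0)\ge\ell_{x_k}(\eta_{x_k}^*)$ to bound $g(x_k)-g(x_{k+1})$ and then invoking $L$-retraction-smoothness of $f$ so that the inner-product terms cancel---is exactly the paper's proof, which does this in two lines.

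Your final paragraph goes beyond what the paper writes: the paper simply applies the retraction-smoothness inequality without checking that $x_{k+1}\in\Omega_{x_0}$. You are right that Definition~\ref{RPG:def:Lsmooth} formally requires $R_{x_k}(\eta_{x_k}^*)\in\Omega_{x_0}$ before~\eqref{RPG:e65} can be invoked, so the concern is legitimate. However, your proposed fix has a gap. The ``argument above'' does \emph{not} show that $F$ is monotone along the path $t\mapsto R_{x_k}(t\eta_{x_k}^*)$: to conclude $F(R_{x_k}(t\eta_{x_k}^*))\le F(x_k)$ for an intermediate $t$ you would need $\ell_{x_k}(0)\ge\ell_{x_k}(t\eta_{x_k}^*)$, but the algorithm only guarantees this at $t=1$, not for all $t\in[0,1]$. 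Hence the sup/continuity argument forcing $t^*=1$ does not go through as written. In the applications the paper treats (compact embedded submanifolds with globally defined retractions), the discussion after Assumption~\ref{RPG:as3} shows that~\eqref{RPG:e65} in fact holds with $\mathcal{N}=\mathcal{M}$, which is presumably why the paper does not dwell on this point.
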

\begin{proof}
By the definition of $\eta_{x_k}^*$ and the $L$-retraction-smooth of $f$, we have
\begin{align}
F(x_k) &= f(x_k) + g(x_k) \geq f(x_k) + \inner[x_k]{\grad f(x_k)}{\eta_{x_k}^*} + \frac{\tilde{L}}{2} \|\eta_{x_k}^*\|_{x_k}^2 + g(R_{x_k}(\eta_{x_k}^*)) \nonumber \\
&\geq \frac{\tilde{L} - L}{2} \|\eta_{x_k}^*\|_{x_k}^2 + f(R_{x_k}(\eta_{x_k}^*)) + g(R_{x_k}(\eta_{x_k}^*)) = F(x_{k+1}) + \frac{\tilde{L} - L}{2} \|\eta_{x_k}^*\|_{x_k}^2, \nonumber
\end{align}
which completes the proof.
\end{proof}

Lemma~\ref{RPG:le10} will be used for the global convergence analysis in Theorem~\ref{RPG:globaltheo}.
\kwcomm{a) $\kappa$ shouldn't rely on $x$; b) add more details in the derivation so that non-experts can follow easily.}

\begin{lemma} \label{RPG:le10}
Let $\xi$ be a continuous vector field. Then  $\lim_{\eta_x \rightarrow 0} \|\xi_y - \mathcal{T}_{\eta_x}^{- \sharp} \xi_x\|_y = 0$, where $y=R_x(\eta_x)$. 
\end{lemma}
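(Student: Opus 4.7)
The plan is to reduce the comparison of tangent vectors at different base points to a comparison in the single tangent space $\T_x \mathcal{M}$ by means of an isometric transport, specifically the parallel translation $\mathcal{P}$ introduced in Section~\ref{RPG:sect:NotationPreliminaries}. For $\eta_x$ small enough that $y = R_x(\eta_x)$ lies within the injectivity radius of $x$, let $\gamma$ be the geodesic with $\gamma(0) = x$, $\gamma(1) = y$, and let $\mathcal{P}_\gamma^{0 \leftarrow 1}: \T_y \mathcal{M} \to \T_x \mathcal{M}$ denote the associated parallel translation. Since $\mathcal{P}_\gamma^{0 \leftarrow 1}$ is an isometry,
\begin{equation*}
\|\xi_y - \mathcal{T}_{\eta_x}^{-\sharp} \xi_x\|_y = \|\mathcal{P}_\gamma^{0 \leftarrow 1} \xi_y - \mathcal{P}_\gamma^{0 \leftarrow 1} \mathcal{T}_{\eta_x}^{-\sharp} \xi_x\|_x,
\end{equation*}
so it suffices to show that both $\mathcal{P}_\gamma^{0 \leftarrow 1} \xi_y$ and $\mathcal{P}_\gamma^{0 \leftarrow 1} \mathcal{T}_{\eta_x}^{-\sharp} \xi_x$ converge to $\xi_x$ in $\|\cdot\|_x$ as $\eta_x \to 0_x$.

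For the first term, smoothness of $R$ and $R_x(0_x) = x$ yield $y \to x$, and the continuity of $\xi$ together with continuity of parallel translation in its base point (with $\mathcal{P}$ reducing to the identity when $y = x$) gives $\mathcal{P}_\gamma^{0 \leftarrow 1} \xi_y \to \xi_x$. For the second term, I would rely on the smoothness of $\mathcal{T}$ in $\eta_x$, together with the consistency property $\mathcal{T}_{0_x} = \id_{\T_x \mathcal{M}}$; this property is standard for vector transports and, for instance, can be verified directly for the vector transport by differential retraction from $\mathcal{T}_{R_{\eta_x}}\xi_x = \frac{d}{dt}R_x(\eta_x + t\xi_x)\vert_{t=0}$ combined with $\frac{d}{dt}R_x(t\xi_x)\vert_{t=0}=\xi_x$, and similarly for the parallel translation itself. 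Since the Riemannian metric is smooth and $\mathcal{T}_{\eta_x}$ depends smoothly on $\eta_x$, its adjoint and inverse do as well, so $\eta_x \mapsto \mathcal{T}_{\eta_x}^{-\sharp}\xi_x$ is continuous in $\eta_x$ with value $\xi_x$ at $\eta_x = 0_x$. Continuity of parallel translation then delivers $\mathcal{P}_\gamma^{0 \leftarrow 1} \mathcal{T}_{\eta_x}^{-\sharp} \xi_x \to \xi_x$. A triangle inequality combines the two limits and finishes the proof.

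The main obstacle is the principled comparison of tangent vectors living in different tangent spaces, compounded by the base-point dependence of both the norm $\|\cdot\|_y$ and the adjoint operation $\sharp$. Pulling everything back to $\T_x \mathcal{M}$ by an isometric transport circumvents both issues at once: the isometry reduces $\|\cdot\|_y$ to the fixed norm $\|\cdot\|_x$, and on $\T_x\mathcal{M}$ the inner product is a single fixed bilinear form, so the remaining convergence statements are straightforward consequences of the smoothness of $R$, $\mathcal{T}$, and the Riemannian metric, together with the continuity of $\xi$.
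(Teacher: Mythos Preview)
Your argument is correct. Both your proof and the paper's rest on the same ingredients---smoothness of $R$ and $\mathcal{T}$, continuity of $\xi$, and the consistency $\mathcal{T}_{0_x}=\id$---but the paper takes a shorter, more abstract route: it simply observes that $h:\T\mathcal{M}\to\T\mathcal{M}$, $\eta_x\mapsto \xi_{R_x(\eta_x)}-\mathcal{T}_{\eta_x}^{-\sharp}\xi_x$, is continuous (because $\mathcal{T}^{-\sharp}$ is itself a smooth vector transport with $\mathcal{T}_{0_x}^{-\sharp}=\id$) and satisfies $h(0_x)=0$; since the Riemannian norm is a continuous function on the tangent bundle, $\|h(\eta_x)\|_{R_x(\eta_x)}\to 0$ follows at once. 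Your approach instead introduces parallel translation along a geodesic to pull everything back to the fixed space $\T_x\mathcal{M}$, splits into two terms, and argues each converges to $\xi_x$. This is perfectly valid and arguably more transparent for a reader uneasy with treating $\T\mathcal{M}$ as a single topological space, but it brings in extra machinery (the exponential map, geodesics within the injectivity radius, continuity of parallel translation in the endpoint) that the paper's one-line argument avoids entirely.
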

\begin{proof}
\whfirrev{}{
Define function $h:\T \mathcal{M} \rightarrow \T \mathcal{M}: \eta_x \mapsto \xi_y - \mathcal{T}_{\eta_x}^{- \sharp} \xi_x$. By definition of vector transport and its adjoint operator, we have that $\mathcal{T}^{-\sharp} \in C^1$ and $\mathcal{T}_{0_x}^{-\sharp} = \mathrm{id}$, and therefore $\mathcal{T}^{-\sharp}$ is a vector transport, where $\mathrm{id}$ denotes the identity operator. It follows that $h(0_x) = 0$ and $h$ is a continuous function. Therefore, $\lim_{\eta_x \rightarrow 0} \|\xi_y - \mathcal{T}_{\eta_x}^{- \sharp} \xi_x\|_y = 0$.}

\end{proof}

We are now in the position to give a global convergence analysis of Algorithm~\ref{RPG:a1}.\kwcomm{a) scale problem we once mentioned? b) assumptions seem weaker than in Ma's result}

\begin{theorem} \label{RPG:globaltheo}
If $\eta_{x_k}^* = 0$, then $x_k$ is a stationary point.
Suppose Assumptions~\ref{RPG:as10} and~\ref{RPG:as3} hold.
Then the sequence $\{x_k\}$ has at least one accumulation point.
Let $x_*$ be any accumulation point of the sequence $\{x_k\}$.
Then $x_*$ is a stationary point.
Furthermore, Algorithm~\ref{RPG:a1} returns $x_k$ satisfying $\|\eta_{x_k}^*\|_{x_k} \leq \epsilon$ in at most $(F(x_0) - F(x_*)) / (\beta \epsilon^2)$ iterations.
\end{theorem}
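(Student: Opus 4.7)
The proof naturally decomposes into four parts matching the four claims, and I would attack them in the order stated in the theorem.

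First, for the implication $\eta_{x_k}^*=0\Rightarrow x_k$ stationary, I would write down the first-order optimality condition for the inner subproblem~\eqref{RPG:subproblem2}. Because $\ell_{x_k}$ is the sum of a smooth quadratic and $g\circ R_{x_k}$ on the linear space $\T_{x_k}\mathcal{M}$, Clarke stationarity translates into $0\in\grad f(x_k)+\tilde{L}\eta_{x_k}^*+\partial(g\circ R_{x_k})(\eta_{x_k}^*)$. Setting $\eta_{x_k}^*=0$ and using $\D R_{x_k}(0_{x_k})=\id$ together with the chain rule for the Clarke subdifferential gives $-\grad f(x_k)\in\partial g(x_k)$, which is exactly the Riemannian stationarity condition. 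The second claim—existence of an accumulation point—follows at once: by Lemma~\ref{RPG:le3} the sequence $\{F(x_k)\}$ is non-increasing, so $\{x_k\}\subset\Omega_{x_0}$, and $\Omega_{x_0}$ is compact by Assumption~\ref{RPG:as10}.

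For the third claim, that every accumulation point $x_*$ is stationary, I would first telescope the descent inequality~\eqref{RPG:e9} and use lower boundedness of $F$ (Assumption~\ref{RPG:as10}) to obtain $\sum_{k\geq 0}\|\eta_{x_k}^*\|_{x_k}^2<\infty$; in particular $\|\eta_{x_k}^*\|_{x_k}\to 0$. Pick a subsequence $x_{k_j}\to x_*$; continuity of $R$ then also gives $x_{k_j+1}=R_{x_{k_j}}(\eta_{x_{k_j}}^*)\to x_*$. From the optimality of $\eta_{x_{k_j}}^*$ in the subproblem I can extract subgradients $\zeta_{k_j+1}\in\partial g(x_{k_j+1})$ obeying
\[
\zeta_{k_j+1} \;=\; -\,\mathcal{T}_{R_{\eta_{x_{k_j}}^*}}^{-\sharp}\bigl(\grad f(x_{k_j})+\tilde{L}\,\eta_{x_{k_j}}^*\bigr).
\]
Applying Lemma~\ref{RPG:le10} to the continuous vector field $\grad f$, combined with $\eta_{x_{k_j}}^*\to 0$, shows the right-hand side converges to $-\grad f(x_*)$ once everything is transported back to $\T_{x_*}\mathcal{M}$. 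Closedness of the Riemannian Clarke subdifferential of the continuous function $g$ at $x_*$ then gives $-\grad f(x_*)\in\partial g(x_*)$, i.e., $x_*$ is stationary. For the complexity claim I simply sum~\eqref{RPG:e9} from $0$ to $K-1$ to get $\beta\sum_{k=0}^{K-1}\|\eta_{x_k}^*\|_{x_k}^2\le F(x_0)-F(x_K)\le F(x_0)-F(x_*)$, the last inequality coming from the monotone convergence $F(x_k)\downarrow F(x_*)$ (continuity of $F$ plus monotonicity). If the stopping criterion fails for every $k<K$, each summand exceeds $\epsilon^2$, forcing $K<(F(x_0)-F(x_*))/(\beta\epsilon^2)$.

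The main obstacle I anticipate is inside Part 3: namely, identifying the object $\zeta_{k_j+1}$ produced by the optimality condition of the subproblem as a genuine Riemannian Clarke subgradient of $g$ at $x_{k_j+1}$, and then rigorously passing to the limit although the subgradients live in different tangent spaces $\T_{x_{k_j+1}}\mathcal{M}$. This rests on (i) a chain rule relating $\partial(g\circ R_{x_k})(\eta)$ to $\D R_{x_k}(\eta)^{*}\partial g(R_{x_k}(\eta))$, and (ii) a closedness (outer semi-continuity) property of the Riemannian Clarke subdifferential along sequences converging to $x_*$. Both are standard in the nonsmooth-manifold literature but must be cited and assembled carefully. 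Lemma~\ref{RPG:le10} is tailor-made to absorb the vector-transport mismatch, so once the chain rule and the closedness property are in place the remainder of the argument is routine.
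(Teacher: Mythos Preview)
Your proposal is correct and follows essentially the same route as the paper. The only cosmetic difference in Part~3 is that the paper packages the subgradient as an element of $\partial F(x_{k_j+1})$, namely $\grad f(x_{k_j+1})+\zeta_{x_{k_j+1}}$, shows its norm tends to zero via Lemma~\ref{RPG:le10}, and then invokes the outer-semicontinuity result \cite[Theorem~2.2(c)]{HHY2018} for $\partial F$ directly; you instead keep $\zeta$ in $\partial g$ and pass to the limit there, which is equivalent since $\grad f$ is continuous.
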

\begin{proof}
If $\eta_{x_k}^* = 0$, then we have $0 \in \partial F(x_k)$, which is the first-order necessary condition for the optimality of \eqref{RPG:prob1}.  
By Assumption~\ref{RPG:as10} and Lemma~\ref{RPG:le3}, the sequence $\{x_k\}$ stays in the compact set $\Omega_{x_0}$, which implies the existence of an accumulation point.

In order to prove that any accumulation point is a stationary point, we will resort to ~\cite[Theorem 2.2(c)]{HHY2018} which states that if $\{z_i\} \subset \mathcal{M}$, $\xi_i \in \partial F(z_i)$, $z_i \rightarrow z_*$, \whfirrev{}{$F(z_i)\rightarrow F(z_*)$}, and $\xi_i \rightarrow \xi_*$ as $i \rightarrow \infty$, then $\xi_* \in \partial F(z_*)$.

By Lemma~\ref{RPG:le3}, we have that $F(x_0) - F(\whfirrev{}{\tilde{x}}) \geq \beta \sum_{i = 0}^\infty \|\eta_{x_k}^*\|_{x_k}^2$, where $\whfirrev{}{\tilde{x}}$ denotes a global minimizer of $F$. Therefore, 
\begin{equation} \label{RPG:e82}
\lim_{k \rightarrow \infty} \|\eta_{x_k}^*\|_{x_k} = 0.
\end{equation}
Let $\{x_{k_j}\}$ be a subsequence satisfying 
\begin{equation} \label{RPG:e91}
x_{k_j} \rightarrow x_*
\end{equation}
as $j \rightarrow \infty$.
\whfirrev{}{
Choose $\delta>0$ sufficiently small such that $\mathbb{B}(x_*, \delta )$ is a totally retractive set, or equivalently, there exists a positive constant $\varrho$ such that, for all $y \in \mathbb{B}(x_*, \delta )$, $\mathbb{B}(x_*, \delta ) \subset R_y(\mathcal{B}(0_y, \varrho) )$ and $R_y$ is a diffeomorphism in $\mathcal{B}(0_y, \varrho)$. By~\eqref{RPG:e91}, there exists $J_1 > 0$ such that $x_{k_j} \in \mathbb{B}(x_*, \delta )$ for all $j > J_1$. By~\eqref{RPG:e82}, there exists $J_2 > 0$ such that $\|\eta_{x_{k_j}}^*\|_{x_{k_j}} < \varrho$ for all $j > J_2$.

}
\whfirrev{}{Since $R_{x_{k_j}}$ is smooth, 
the limit \eqref{RPG:e82} yields \whsecrev{}{
$\lim_{j \rightarrow \infty} R_{x_{k_j}} (\eta_{x_{k_j}}^*) = \lim_{j \rightarrow \infty} R_{x_{k_j}} (0_{x_{k_j}}) = \lim_{j \rightarrow \infty} x_{k_j}$. Further using~\eqref{RPG:e91} and noting $x_{k_j + 1} = R_{x_{k_j}} (\eta_{x_{k_j}}^*)$, we have $\lim_{j\rightarrow \infty} x_{k_j + 1} = x_*$.
}}


By the definition of $\eta_{x_k}^*$ in~\eqref{RPG:subproblem2}, there exists $\zeta_{x_{k+1}} \in \partial g(x_{k+1})$ such that
\begin{equation} \label{RPG:e27}
	\grad f(x_k) + \tilde{L} \eta_{x_k}^* + \mathcal{T}_{R_{\eta_{x_k}^*}}^\sharp \zeta_{x_{k+1}} = 0.
\end{equation}
\whfirrev{}{
Since for any $j > \max(J_1, J_2)$, $R_{x_{k_j}}$ is a diffeomprohism in $\mathbb{B}(x_{k_j}, \varrho)$ and $\eta_{x_{k_j}}^* \in \mathbb{B}(x_{k_j}, \varrho )$, the vector transport by differentiated retraction $\mathcal{T}_{R_{ \eta_{x_{k_j}} }}$ is invertible. It follows that $\mathcal{T}_{R_{ \eta_{x_{k_j}} }}^{\sharp}$ is invertible.}
Therefore, we have
\begin{equation} \label{RPG:e26}
	\grad f(R_{x_{k_j}}(\eta_{x_{k_j}}^*)) - \mathcal{T}_{R_{\eta_{x_{k_j}}^*}}^{- \sharp} (\grad f(x_{k_j}) + \tilde{L}  \eta_{x_{k_j}}^*) = \grad f(x_{k_j + 1}) + \zeta_{x_{k_j+1}} \in \partial F(x_{k_j + 1}).
\end{equation}
Combining~\eqref{RPG:e82} with Lemma~\ref{RPG:le10} yields
\begin{align}
&\|\grad f(R_{x_{k_j}}(\eta_{x_{k_j}}^*)) - \mathcal{T}_{R_{\eta_{x_{k_j}}^*}}^{- \sharp} (\grad f(x_{k_j}) + \tilde{L} \eta_{x_{k_j}}^*)\|_{R_{x_{k_j}}(\eta_{x_{k_j}}^*)} \nonumber \\
\leq& \|\grad f(R_{x_{k_j}}(\eta_{x_{k_j}}^*)) - \mathcal{T}_{R_{\eta_{x_{k_j}}^*}}^{- \sharp} \grad f(x_{k_j})\|_{R_{x_{k_j}}(\eta_{x_{k_j}}^*)} + \|\mathcal{T}_{R_{\eta_{x_{k_j}}^*}}^{- \sharp} \tilde{L} \eta_{x_{k_j}}^*\|_{R_{x_{k_j}}(\eta_{x_{k_j}}^*)} \rightarrow 0, \hbox{ as $j \rightarrow \infty$}. \nonumber 
\end{align}
\kwnote{}{Moreover, since $F$ is continuous, $F(x_{k_j+1})\rightarrow F(x_*)$.} It follows from~\cite[Theorem 2.2(c)]{HHY2018} that $0 \in \partial F(x_*)$, so $x_*$ is a stationary point.

\whfirrev{}{
Lastly, we show that Algorithm~\ref{RPG:a1} returns $x_k$ satisfying $\|\eta_{x_k}^*\|_{x_k} \leq \epsilon$ in at most $(F(x_0) - F(x_*)) / (\beta \epsilon^2)$ iterations. If it was not true, then it would hold that $\|\eta_{x_k}^*\|_{x_k} > \epsilon$ for all $k = 0, 1, \ldots, K-1$, where $K$ is the the smallest integer larger than or equal to $(F(x_0) - F(x_*)) / (\beta \epsilon^2)$. It follows that $F(x_0) - F(x_*) \geq F(x_0) - F(x_K) > K \beta \epsilon^2 \geq (F(x_0) - F(x_*)) / (\beta \epsilon^2) * (\beta \epsilon^2) = (F(x_0) - F(x_*))$, which is a contradiction.
}


\end{proof}

\subsection{Convergence Rate Analysis Using Retraction Convexity} \label{RPG:sect:ConRatConv} 

It is well-known that in the Euclidean setting the proximal gradient method \eqref{RPG:EPG} has $O(1/k)$ convergence rate for convex problems~\cite{Beck2009}.
 In order to establish the convergence rate of Algorithm~\ref{RPG:a1} in the Riemannian setting, we use the following concept of convexity on a manifold.



\kwcomm{history about retraction-convex?} \whcomm{[It has been discussed after Lemma~\ref{RPG:le13}.]}{}
\begin{definition} \label{RPG:def:Rconvexity}
A function $h:\mathcal{M} \rightarrow \mathbb{R}$ is called retraction-convex with respect to a retraction $R$ in  $\mathcal{N} \subseteq \mathcal{M}$ if for any $x \in \mathcal{N}$ and any $\mathcal{S}_x \subseteq \T_x \mathcal{M}$ such that $R_x(\mathcal{S}_x)\subseteq \mathcal{N}$, there exists a tangent vector $\zeta \in \T_x \mathcal{M}$ such that $q_x = h \circ R_x$ satisfies
\begin{equation} \label{RPG:def:conv}
q_x(\eta) \geq q_x(\xi) + \inner[x]{\zeta}{\eta - \xi}\;\; \forall \eta, \xi \in \mathcal{S}_x.
\end{equation}
Note that $\zeta = \grad q_x(\xi)$ if $h$ is differentiable; otherwise, $\zeta$ is any \whfirrev{}{Riemannian} subgradient of $q_x$ at $\xi$.
\end{definition}
\whcomm{}{
In a Euclidean space,  any local minimizer of a convex function over a convex set is a global minimizer. In the Riemannian setting, a notion of retraction-convex set is not well-defined in general. 
To avoid such technical difficulties, we can assume that minimizers only appear in the interior of the constrained set. Then it is not difficult to show that any local minimizer is a global minimizer. \whfirrev{}{The details are omitted due to the similarity with the Euclidean case.}
}

Convexity of functions on  Riemannian manifolds has already been investigated in the literature  based on geodesic, see for example \cite{FO2002,ZS2016}.  A function $h:\mathcal{M} \rightarrow \mathbb{R}$ is called geodesic convex, if for any $x, y \in \mathcal{M}$, there exists a tangent vector $\eta_x \in \T_x \mathcal{M}$ such that $f(y) \geq f(x) + \inner[x]{\eta_x}{\mathrm{Exp}_x^{-1}(y)}$. It can be verified that if a function is retraction-convex with respect to the exponential mapping, then it is indeed geodesic-convex. \whcomm{}{This can be easily seen by setting $\xi = 0$ and choosing the retraction to be the exponential mapping in~\eqref{RPG:def:conv}.} \kwcomm{cite}\whcomm{[Added a sentence before.]}{}  \whcomm{}{In~\cite{HGA2014}, a retraction-convexity is defined  for $C^2$ functions on manifolds which can be viewed as  a Riemannian generalization of the geodesic convexity for $C^2$ functions.}
\kwcomm{not sure whether this comment added here is appropriate or not} \whcomm{[I am fine with this comment.]}{}
The following lemma presents two sufficient conditions for a function to be  locally retraction-convex.
\begin{lemma} \label{RPG:le12}
\whfirrev{}{Given $x \in \mathcal{M}$ and a twice continuously differentiable function $h:\mathcal{M} \rightarrow \mathbb{R}$,  if one of the following conditions holds:
\begin{itemize}
\item $\Hess h$ is positive definite at $x$, and the retraction is second order;
\item The manifold $\mathcal{M}$ is an embedded submanifold of $\mathbb{R}^n$ endowed with the Euclidean metric; $\mathcal{W}$ is an open subset of $\mathbb{R}^n$; $x \in \mathcal{W}$; $h: \mathcal{W} \subset \mathbb{R}^n \rightarrow \mathbb{R}$ is a $\mu$-strongly convex function in the Euclidean setting for a sufficient large $\mu$; the retraction is second order; 
\end{itemize}
then there exists a neighborhood of $x$, denoted by $\mathcal{N}_x$, such that the function $h:\mathcal{M} \rightarrow \mathbb{R}$ is retraction-convex in $\mathcal{N}_x$.
}

\end{lemma}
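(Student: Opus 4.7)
The plan is to prove each of the two listed sufficient conditions separately, with the second one reducing largely to the first. In both cases, the strategy is to verify positive definiteness of the Hessian of the pullback $q_y = h \circ R_y$ on a neighborhood, and then invoke a standard convex-analysis argument: if $\Hess q_y$ is positive definite on (a convex subset of) $\T_y \mathcal{M}$, then $q_y$ is convex there, which yields the subgradient inequality~\eqref{RPG:def:conv}.

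For the first bullet, I would begin by recalling the well-known property of a second-order retraction (see~\cite{AMS2008}): $\Hess q_x(0_x) = \Hess h(x)$. Hence the assumption that $\Hess h(x)$ is positive definite immediately gives positive definiteness of $\Hess q_x(0_x)$. Next, since $(y, \eta) \mapsto \Hess q_y(\eta)$ is continuous on the tangent bundle, there exists an open neighborhood $U$ of $(x, 0_x)$ in $\T \mathcal{M}$ on which the Hessian remains positive definite. I would then shrink a neighborhood $\mathcal{N}_x$ of $x$ (using that $R$ is a local diffeomorphism in a tube around the zero section, together with a compactness argument) to ensure that whenever $y \in \mathcal{N}_x$ and $\eta \in \T_y \mathcal{M}$ satisfies $R_y(\eta) \in \mathcal{N}_x$, the pair $(y, \eta)$ lies in $U$. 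For any $\mathcal{S}_y \subseteq \T_y \mathcal{M}$ with $R_y(\mathcal{S}_y) \subseteq \mathcal{N}_x$, positive definiteness of $\Hess q_y$ on this set gives convexity of $q_y$ along segments in $\mathcal{S}_y$, and choosing $\zeta = \grad q_y(\xi)$ yields~\eqref{RPG:def:conv}.

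For the second bullet, I would reduce to the first by showing that the Riemannian Hessian of $h|_\mathcal{M}$ at $x$ is positive definite when $\mu$ is sufficiently large. Since $\mathcal{M}$ is an embedded submanifold of $\mathbb{R}^n$ with the induced metric, the standard formula for the Riemannian Hessian (see, e.g., \cite{AMS2008}) gives
\[
\langle \eta, \Hess h|_\mathcal{M}(x)[\eta]\rangle_{\E} \;=\; \langle \eta, \nabla^2 h(x)\,\eta\rangle_{\E} \;-\; \langle \mathfrak{A}_{P_{\N_x\mathcal{M}}\nabla h(x)}(\eta),\, \eta\rangle_{\E},
\]
for all $\eta \in \T_x\mathcal{M}$, where $\mathfrak{A}$ denotes the Weingarten map and $P_{\N_x\mathcal{M}}$ the projection onto the normal space. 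Strong convexity of $h$ gives the lower bound $\mu\|\eta\|_{\E}^2$ on the first term, while the second term is bounded in absolute value by $\|\mathfrak{A}\|\,\|\nabla h(x)\|_{\E}\,\|\eta\|_{\E}^2$, where $\|\mathfrak{A}\|$ and $\|\nabla h(x)\|_{\E}$ are finite at the fixed point $x$. Hence for $\mu$ larger than this product, $\Hess h|_\mathcal{M}(x)$ is positive definite, and the first bullet supplies the desired neighborhood $\mathcal{N}_x$.

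The main obstacle is the uniformity argument in the first case: one needs $\mathcal{N}_x$ small enough that positive definiteness of $\Hess q_y$ persists not only at the origin of $\T_y\mathcal{M}$ but at every $\eta$ whose retraction lands back in $\mathcal{N}_x$. Handling this cleanly requires exploiting continuity of $\Hess q_y(\eta)$ in the tangent-bundle coordinate together with the fact that $R_y$ is a local diffeomorphism near $0_y$ uniformly in $y$ near $x$, so that $\{\eta \in \T_y \mathcal{M} : R_y(\eta) \in \mathcal{N}_x\}$ is contained in an arbitrarily small ball about $0_y$ for $\mathcal{N}_x$ sufficiently small. Once this uniform smallness is established, everything else is standard.
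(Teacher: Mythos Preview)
Your proposal is correct and follows essentially the same approach as the paper. Both arguments use the second-order retraction identity $\Hess(h\circ R_x)(0_x)=\Hess h(x)$ together with continuity of the pullback Hessian on the tangent bundle for the first bullet, and for the second bullet both reduce to the first by showing the Riemannian Hessian at $x$ is positive definite once $\mu$ dominates the curvature correction term (the paper writes this via $P_{\T_x\mathcal{M}}(\D_{\eta_x}P)\nabla h(x)$, you via the Weingarten map, which are equivalent formulations). Your treatment of the uniformity issue in the first bullet is in fact a bit more explicit than the paper's, which simply asserts that a neighborhood $\mathcal{S}_x$ of $0_x$ in $\T\mathcal{M}$ on which the Hessian stays positive definite yields retraction-convexity in a sufficiently small neighborhood of $x$.
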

{\em Proof. }
First note that an equivalent condition of $\mu$-strongly convexity for a twice continuously differentiable function is that the smallest eigenvalue of its Hessian is greater than $\mu$.
\begin{itemize}
\item Since the retraction is second order, it follows from~\cite[Proposition~5.5.6]{AMS2008} that $\Hess h(x) = \Hess (h \circ R_x) (0_x)$. Therefore, $\Hess (h\circ R)$ is positive definite at $0_x$. Since $h$ and $R$ are twice continuously differentiable, \whcomm{[fixed the notation]}{$\Hess (h\circ R)$} is continuous in $\T \mathcal{M}$.\kwcomm{function of what? or should it be $\Hess (h\circ R_x) (\eta)$} \whcomm{[it is a function of a tangent vector in the tangent bundle. In other words, $\Hess (h \circ R)$ is a function from the tangent bundle to a linear operator on a tangent space.]}{} Therefore, there exists a neighborhood of $0_x$, denoted by $\mathcal{S}_x \subset \T \mathcal{M}$, such that $\Hess (h \circ R)(\eta)$ is positive definite for any $\eta \in \mathcal{S}_x$ \kwcomm{should it be $\Hess (h \circ R_x)(\eta)$}. This implies that $h$ is retraction-convex in a sufficient small neighborhood of $x$.
\item The Riemannian Hessian of $h$ at any point $x \in \mathcal{M}$ is (see~\cite{AMT2013})
	\begin{equation*}
		\Hess h(x) [\eta_x] = P_{\T_x \mathcal{M}} \nabla^2 h(x) \eta_x +  P_{\T_x \mathcal{M}} (\D_{\eta_x}  P) \nabla h(x),
	\end{equation*}
	where $\D_{\eta_x} P = \lim_{t \rightarrow 0} \frac{P_{\T_{\gamma(t)} \mathcal{M}} - P_{\T_{\gamma(0)} \mathcal{M}}}{t}$, $\gamma$ is a curve on $\mathcal{M}$ such that $\gamma(0) = x$ and $\gamma'(0) = \eta_x$. Let $\vartheta = \sup_{\eta_x \in \T_x \mathcal{M}} \frac{\|P_{\T_x \mathcal{M}} (\D_{\eta_x}  P) \nabla h(x)\|_{\E}}{\|\eta_{x}\|_{\E}}$. It holds that \whfirrev{}{for all $\eta_x \in \T_x \mathcal{M}$},
	\begin{equation} \label{RPG:e29}
	\inner[\E]{\eta_x}{\Hess h(x) [\eta_x]} = \inner[\E]{\eta_x}{\nabla^2 h(x) \eta_x} + \inner[\E]{\eta_x}{P_{\T_x \mathcal{M}} (\D_{\eta_x}  P) \nabla h(x)} \geq (\mu - \vartheta) \|\eta_x\|_{\E}^2,
	\end{equation}
	where the inequality is from the $\mu$-strongly convexity of $h$. As a result, $\lambda_\mathrm{min}(\Hess h(x)) \geq \mu - \vartheta$, where $\lambda_{\mathrm{min}}(M)$ denotes the smallest eigenvalue of the linear operator $M$. It follows from~\cite[Proposition~5.5.6]{AMS2008} that $\inner[\E]{\eta_x}{\Hess h\circ R_x(0_x) [\eta_x]} = \inner[\E]{\eta_x}{\Hess h(x) [\eta_x]}$ for any second order retraction. Therefore, we have $\lambda_\mathrm{min}(\Hess h\circ R_x(0_x)) \geq \mu - \vartheta$. If \whfirrev{}{$\mu > \vartheta$},
    then  $\Hess h \circ R$ is positive definite at $0_x$. It follows that $h$ is retraction-convex in a sufficiently small neighborhood of $x$.\hfil\qed
\end{itemize}

The convergence rate analysis of the Riemannian proximal gradient methods relies on the following two assumptions.
\begin{assumption} \label{RPG:as1}
There exists an open set $\Omega \supseteq \Omega_{x_0}$ such that the function $f$ is $L$-retraction-smooth and retraction-convex with respect to the retraction $R$ in $\Omega$. The function $g$ is retraction-convex with respect to the retraction $R$ in $\Omega$. 
\end{assumption}

%
%
%

 \kwcomm{why righthand is $\eta_x$ not $\xi_x$ which seems unsymmetric} \whcomm{[The right hand side can be $\kappa \min(\|\eta_x\|^2, \|\xi_x\|^2) \|\zeta\|_y^2$. A footnote is added.]}{}
\begin{assumption} \label{RPG:as6}
For any $x, y, z \in \Omega$, there exists a constant \whfirrev{}{$\kappa_{\Omega}$} such that \footnote{The right hand side of~\eqref{RPG:e52} can be $\whfirrev{}{\kappa_{\Omega}} \min(\|\eta_x\|_x^2, \|\xi_x\|_x^2) \|\zeta_y\|_y^2$. We use the the form in~\eqref{RPG:e52} for simplicity.}
\begin{align} \label{RPG:e52}
\left| \| \xi_x - \eta_x \|_x^2 - \| \zeta_y \|_y^2 \right| \leq& \whfirrev{}{\kappa_{\Omega}} \|\eta_x\|_x^2,
\end{align}
where $\eta_x = R_x^{-1}(y)$, $\xi_x = R_x^{-1}(z)$, $\zeta_y = R_y^{-1}(z)$, \whfirrev{}{$\kappa_{\Omega}$} is a constant, and $\Omega$ is defined in Assumption~\ref{RPG:as1}.
\end{assumption}

 Assumption~\ref{RPG:as6}  imposes an additional restriction on the retraction $R$. In the Euclidean setting, this assumption naturally holds since $\xi_x - \eta_x = (z - x) - (y - x) = (z - y) = \zeta_y$. In the Riemannian setting, we find this assumption reasonable in the sense that \whsecrev{}{
 it has been proven in~\cite[Lemma~2.4]{DDDR2018} that~\eqref{RPG:e52} holds for any compact set $\overline{\Omega}$ when the retraction is the exponential mapping, where $\overline{\Omega}$ denotes the closure of $\Omega$.
 }

The following lemma is central to the later convergence rate analysis and it is a Riemannian version of~\cite[Lemma~2.3]{Beck2009}.

\kwcomm{something about the connection between stationary point in the algorithm and local min in the lemma should be discussed under the rectraction-convex property} \whcomm{[Done]}{} \whcomm{[modify the definition of $\eta_x^*$.]}{}

\begin{lemma} \label{RPG:le1}
\whcomm{}{Let $\eta_x^*$ be a \whfirrev{}{stationary point} of $\ell_x(\eta) = \inner[x]{\grad f(x)}{\eta} + \frac{\tilde{L}}{2} \|\eta\|_x^2 + g\left(R_{x}(\eta)\right)$ such that $\ell_x(0) \geq \ell_x(\eta_x^*)$.} Suppose Assumption~\ref{RPG:as1} holds, and $x$ and $z = R_x(\eta_x^*)$ are in $\Omega$. Then \whfirrev{}{for any $\xi_x \in \T_x \mathcal{M}$ such that $y:= R_x(\xi_x) \in \Omega$}, we have
\begin{equation*}
F(z) \leq F(y) + \frac{\tilde{L}}{2} \left( \|\xi_x\|_x^2 - \|\xi_x - \eta_x^*\|_x^2 \right).
\end{equation*}
\end{lemma}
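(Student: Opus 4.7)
The plan is to mimic the Euclidean proof of Beck--Teboulle's Lemma~2.3 \cite{Beck2009} by working entirely on the tangent space $\T_x \mathcal{M}$ via the retraction $R_x$, using the retraction-convexity of $f$ and $g$ from Assumption~\ref{RPG:as1} together with the $L$-retraction-smoothness of $f$. Since $\Omega$ is open and $x,y,z \in \Omega$, I may select a subset $\mathcal{S}_x \subseteq \T_x\mathcal{M}$ containing $0_x,\eta_x^*,\xi_x$ with $R_x(\mathcal{S}_x) \subseteq \Omega$, so the definitions of retraction-convexity and retraction-smoothness apply to all three points simultaneously.

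First, I translate the stationarity of $\eta_x^*$ for $\ell_x$ into a subdifferential inclusion on $\T_x\mathcal{M}$: there exists $\zeta \in \partial(g \circ R_x)(\eta_x^*)$ with
\[
\grad f(x) + \tilde{L}\,\eta_x^* + \zeta = 0.
\]
Next I combine three one-sided estimates. Retraction-convexity of $g$ (applied with $\xi=\eta_x^*$, $\eta=\xi_x$) gives
\[
g(y) \;\geq\; g(z) + \inner[x]{\zeta}{\xi_x - \eta_x^*} \;=\; g(z) - \inner[x]{\grad f(x) + \tilde{L}\eta_x^*}{\xi_x - \eta_x^*}.
\]
Retraction-convexity of $f$ at $x$ (with $\xi = 0_x$, $\eta = \xi_x$, using $\grad(f\circ R_x)(0_x)=\grad f(x)$) yields
\[
f(y) \;\geq\; f(x) + \inner[x]{\grad f(x)}{\xi_x}.
\]
Finally, $L$-retraction-smoothness of $f$ gives
\[
f(z) \;\leq\; f(x) + \inner[x]{\grad f(x)}{\eta_x^*} + \tfrac{L}{2}\|\eta_x^*\|_x^2.
\]

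Adding these three estimates, the $f(x)$ and $\inner[x]{\grad f(x)}{\xi_x}$ cross-terms cancel and I obtain
\[
F(z) - F(y) \;\leq\; \tfrac{L}{2}\|\eta_x^*\|_x^2 + \tilde{L}\inner[x]{\eta_x^*}{\xi_x - \eta_x^*}.
\]
Using the polarization identity
\[
\inner[x]{\eta_x^*}{\xi_x - \eta_x^*} = \tfrac{1}{2}\!\left(\|\xi_x\|_x^2 - \|\xi_x - \eta_x^*\|_x^2 - \|\eta_x^*\|_x^2\right),
\]
the right-hand side becomes
\[
\tfrac{L-\tilde L}{2}\|\eta_x^*\|_x^2 + \tfrac{\tilde L}{2}\!\left(\|\xi_x\|_x^2 - \|\xi_x - \eta_x^*\|_x^2\right),
\]
and since $\tilde L > L$ the first summand is nonpositive, yielding the claimed inequality.

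The only delicate point is bookkeeping around the subgradient: one must verify that the $\zeta$ produced by the stationarity condition is the \emph{same} $\zeta$ that appears in the retraction-convexity inequality for $g\circ R_x$ at $\eta_x^*$. This is automatic from Definition~\ref{RPG:def:Rconvexity}, which permits $\zeta$ to be \emph{any} Riemannian subgradient of $q_x = g\circ R_x$ at the base point, so the same element that realizes $0 \in \partial\ell_x(\eta_x^*)$ serves in the subgradient inequality. Aside from this, no new Riemannian machinery is needed and Assumption~\ref{RPG:as6} plays no role here; it is reserved for the subsequent rate proof where the distance term $\|\xi_x - \eta_x^*\|_x^2$ has to be compared to a distance on the manifold.
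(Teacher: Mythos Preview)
Your proof is correct and follows essentially the same approach as the paper's: both use the stationarity condition of $\ell_x$ at $\eta_x^*$, the retraction-convexity of $g$ and of $f$, and the $L$-retraction-smoothness of $f$, then collapse the resulting expression via the polarization identity. The only cosmetic differences are that the paper writes the stationarity condition via the vector-transport adjoint, $0=\grad f(x)+\tilde L\eta_x^*+\mathcal{T}_{R_{\eta_x^*}}^\sharp\zeta_z$ with $\zeta_z\in\partial g(z)$, whereas you work directly with $\zeta\in\partial(g\circ R_x)(\eta_x^*)$ (equivalent by the chain rule), and the paper absorbs the slack $L\le\tilde L$ in the smoothness step while you carry $\tfrac{L-\tilde L}{2}\|\eta_x^*\|_x^2$ to the end before discarding it.
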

\begin{proof}
By definition of $\eta_x^*$, we have
\begin{equation} \label{RPG:e1}
0 = \grad f(x) + \tilde{L} \eta_x^* + \mathcal{T}_{R_{\eta_x^*}}^\sharp \zeta_z,
\end{equation}
where $\zeta_z \in \partial g(z)\subset\T_z\mathcal{M}$. Since $g$ is retraction-convex and $y, z$ are in $\Omega$, we have
\begin{equation} \label{RPG:e2}
g(y) - g(z) = g(R_x(\xi_x)) - g(R_x(\eta_x^*)) \geq \inner[x]{\mathcal{T}_{R_{\eta_x^*}}^\sharp \zeta_z}{ \left( \xi_x - \eta_x^* \right) }.
\end{equation}
Combining~\eqref{RPG:e2} with~\eqref{RPG:e1} yields
\begin{equation} \label{RPG:e3}
g(y) - g(z) \geq  \inner[x]{\grad f(x) + \tilde{L} \eta_x^*}{ \left( \eta_x^* - \xi_x \right) }.
\end{equation}
It follows that
\begin{align*}
&F(z) = F(R_x(\eta_x^*)) = f(R_x(\eta_x^*)) + g(R_x(\eta_x^*)) = f(z) + g(z) \\
\leq& g(y) + \inner[x]{ \grad f(x) + \tilde{L} \eta_x^* }{ \left( \xi_x - \eta_x^* \right) } + f(z)\;\; \hbox{ (using~\eqref{RPG:e3})} \\
\leq& g(y) + \inner[x]{ \grad f(x) + \tilde{L} \eta_x^* }{ \left( \xi_x - \eta_x^* \right) } + f(x) + \inner[x]{\grad f(x)}{\eta_x^*} + \frac{\tilde{L}}{2} \|\eta_x^*\|_x^2\;\; \hbox{ ($f$ is $L$-smooth)} \\
=& g(y) + f(x) + \inner[x]{\grad f(x)}{\xi_x} + \inner[x]{\tilde{L} \eta_x^*}{\xi_x - \eta_x^*} + \frac{\tilde{L}}{2} \|\eta_x^*\|_x^2 {\kwcomm{should~ it ~be~ equality?} \whcomm{[yes]}{}}\\
\leq& g(y) + f(y) + \inner[x]{\tilde{L} \eta_x^*}{\xi_x - \eta_x^*} + \frac{\tilde{L}}{2} \|\eta_x^*\|_x^2 \;\;\;\; \hbox{ ($f$ is retraction-convex)} \\
=& F(y) + \frac{\tilde{L}}{2}\left( \inner[x]{\eta_x^*}{2 \xi_x - \eta_x^*} \right)\\
 =& F(y) + \frac{\tilde{L}}{2} \left( \|\xi_x\|_x^2 - \|\xi_x - \eta_x^*\|_x^2 \right),
\end{align*}
which concludes the proof.
\end{proof}

Theorem~\ref{RPG:th1} shows that Algorithm~\ref{RPG:a1} converges on the order of $O(1/k)$. Note that in the Euclidean setting, the second term on the right side of~\eqref{RPG:e30} vanishes since $\whfirrev{}{\kappa_{\Omega}}= 0$. \whsecrev{}{The convergence rate given in Theorem~\ref{RPG:th1} is different from the iteration complexity stated in Theorem~\ref{RPG:globaltheo} in the sense that the iteration complexity in Theorem~\ref{RPG:globaltheo}   measures the rate of $\|\eta_{x_k}^*\|_{x_k}$ going to zero, not that of  $F(x_k)$ going to $F(x_*)$. }
\begin{theorem} \label{RPG:th1}
Suppose Assumptions~\ref{RPG:as10}, ~\ref{RPG:as1} and~\ref{RPG:as6} hold. \whfirrev{}{Let $x_*$ be any accumulation point of the sequence $\{x_k\}$.} Then the sequence $\{x_k\}$ generated by Algorithm~\ref{RPG:a1} satisfies
\begin{equation} \label{RPG:e30}
F(x_k) - F(x_*) \leq \frac{1}{k} \left( \frac{\tilde{L}}{2} \|R_{x_0}^{-1} (x_*)\|_{x_0}^2 + \frac{\tilde{L} \whfirrev{}{\kappa_{\Omega}}}{2 \beta} (F(x_0) - F(x_*))\right),
\end{equation}
where \whfirrev{}{$\kappa_{\Omega}$} is defined in Assumption~\ref{RPG:as6} and $\beta$ is defined in~\eqref{RPG:e9}.
\end{theorem}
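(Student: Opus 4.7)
The plan is to follow the classical Beck--Teboulle telescoping argument for the Euclidean proximal gradient method, using Lemma~\ref{RPG:le1} in place of~\cite[Lemma~2.3]{Beck2009}, and then absorb the Riemannian ``non-flatness'' error through Assumption~\ref{RPG:as6}. First I would verify that the algorithm stays inside $\Omega$: by Lemma~\ref{RPG:le3} every iterate lies in $\Omega_{x_0}$, so $x_k, x_{k+1}, x_* \in \Omega$ and $R_{x_k}^{-1}(x_*)$, $R_{x_{k+1}}^{-1}(x_*)$ are well defined (possibly after restricting $\Omega$ further so that the needed inverses exist, which is legitimate since $\Omega$ is an open neighborhood of the compact $\Omega_{x_0}$).

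Second, I would instantiate Lemma~\ref{RPG:le1} at $x = x_k$ with the two natural choices of $\xi_x$. Taking $\xi_x = R_{x_k}^{-1}(x_*)$ (so $y = x_*$) gives
\begin{equation*}
F(x_{k+1}) - F(x_*) \leq \frac{\tilde L}{2}\bigl(\|R_{x_k}^{-1}(x_*)\|_{x_k}^2 - \|R_{x_k}^{-1}(x_*) - \eta_{x_k}^*\|_{x_k}^2\bigr).
\end{equation*}
In the Euclidean case one would now use $R_{x_k}^{-1}(x_*) - \eta_{x_k}^* = R_{x_{k+1}}^{-1}(x_*)$ to obtain a perfect telescoping. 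Riemannianly this identity fails, which is the main obstacle. To handle it, I would apply Assumption~\ref{RPG:as6} at $x = x_k$, $y = x_{k+1} = R_{x_k}(\eta_{x_k}^*)$, $z = x_*$, yielding
\begin{equation*}
\|R_{x_k}^{-1}(x_*) - \eta_{x_k}^*\|_{x_k}^2 \geq \|R_{x_{k+1}}^{-1}(x_*)\|_{x_{k+1}}^2 - \kappa_{\Omega}\|\eta_{x_k}^*\|_{x_k}^2,
\end{equation*}
so that combining the two inequalities produces the perturbed telescoping bound
\begin{equation*}
F(x_{k+1}) - F(x_*) \leq \frac{\tilde L}{2}\|R_{x_k}^{-1}(x_*)\|_{x_k}^2 - \frac{\tilde L}{2}\|R_{x_{k+1}}^{-1}(x_*)\|_{x_{k+1}}^2 + \frac{\tilde L \kappa_{\Omega}}{2}\|\eta_{x_k}^*\|_{x_k}^2.
\end{equation*}

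Third, I would sum these inequalities from $k=0$ to $k=K-1$; the first two terms telescope, and the residual $\sum_{k=0}^{K-1}\|\eta_{x_k}^*\|_{x_k}^2$ is controlled using Lemma~\ref{RPG:le3}, which gives $\sum_{k=0}^{K-1}\|\eta_{x_k}^*\|_{x_k}^2 \leq (F(x_0) - F(x_K))/\beta \leq (F(x_0) - F(x_*))/\beta$. Dropping the nonnegative term $\frac{\tilde L}{2}\|R_{x_K}^{-1}(x_*)\|_{x_K}^2$ yields
\begin{equation*}
\sum_{k=0}^{K-1}\bigl(F(x_{k+1}) - F(x_*)\bigr) \leq \frac{\tilde L}{2}\|R_{x_0}^{-1}(x_*)\|_{x_0}^2 + \frac{\tilde L \kappa_{\Omega}}{2\beta}(F(x_0) - F(x_*)).
\end{equation*}

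Finally, I would exploit monotonicity of $\{F(x_k)\}$ from Lemma~\ref{RPG:le3} to write $K(F(x_K) - F(x_*)) \leq \sum_{k=0}^{K-1}(F(x_{k+1}) - F(x_*))$, divide by $K$, and relabel $K$ as $k$ to conclude~\eqref{RPG:e30}. The main technical difficulty is exactly the step that introduces Assumption~\ref{RPG:as6}: without it, iterates living on different tangent spaces cannot be compared, and the telescoping breaks; with it, the discrepancy is absorbed into the second summand of~\eqref{RPG:e30}, which correctly vanishes ($\kappa_{\Omega} = 0$) in the Euclidean case, recovering the classical rate.
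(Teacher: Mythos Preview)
Your proposal is correct and follows essentially the same argument as the paper: apply Lemma~\ref{RPG:le1} with $y=x_*$, correct the tangent-space mismatch via Assumption~\ref{RPG:as6}, telescope, and finish with the monotonicity from Lemma~\ref{RPG:le3}. The only cosmetic differences are that the paper bounds $\|\eta_{x_k}^*\|_{x_k}^2$ by $(F(x_k)-F(x_{k+1}))/\beta$ \emph{before} summing (so the error term telescopes directly), whereas you sum first and then bound the total; and your mention of ``two natural choices of $\xi_x$'' is a harmless slip, since only $\xi_x = R_{x_k}^{-1}(x_*)$ is actually used.
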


\begin{proof}
Lemma~\ref{RPG:le1} with $x = x_k$ and $y = x_*$ gives
\begin{equation*}
F(x_{k+1}) - F(x_*) \leq \frac{\tilde{L}}{2} \left( \|R_{x_k}^{-1} (x_*)\|_{x_k}^2 - \|R_{x_k}^{-1} (x_*) - \eta_{x_k}^*\|_{x_k}^2 \right).
\end{equation*}
Furthermore, Assumption~\ref{RPG:as6} with $x = x_k$, $y = x_{k+1}$, $z = x_*$ gives
\begin{equation*}
\left|\|R_{x_k}^{-1} (x_*) - \eta_{x_k}^*\|_{x_k}^2 - \|R_{x_{k+1}}^{-1} (x_*)\|_{x_{k + 1}}^2\right| \leq \whfirrev{}{\kappa_{\Omega}} \|\eta_{x_k}^*\|_{x_k}^2.
\end{equation*}
Consequently,
\begin{align}
F(x_{k+1}) - F(x_*) &\leq \frac{\tilde{L}}{2} \left( \|R_{x_k}^{-1} (x_*)\|_{x_k}^2 - \|R_{x_k}^{-1} (x_*) - \eta_{x_k}^*\|_{x_k}^2 \right) \nonumber \\
&\leq \frac{\tilde{L}}{2} \left( \|R_{x_k}^{-1} (x_*)\|_{x_k}^2 - \|R_{x_{k+1}}^{-1} (x_*)\|_{x_{k+1}}^2 \right) + \frac{\tilde{L}}{2} \whfirrev{}{\kappa_{\Omega}} \|\eta_{x_k}^*\|_{x_k}^2. \label{RPG:e8}
\end{align}
Combining~\eqref{RPG:e9} and~\eqref{RPG:e8} yields
\begin{equation} \label{RPG:e11}
F(x_{k+1}) - F(x_*) \leq \frac{\tilde{L}}{2} \left( \|R_{x_k}^{-1} (x_*)\|_{x_k}^2 - \|R_{x_{k+1}}^{-1} (x_*)\|_{x_{k+1}}^2 \right) + \frac{\tilde{L} \whfirrev{}{\kappa_{\Omega}}}{2 \beta} (F(x_{k}) - F(x_{k+1})).
\end{equation}
Thus, after summing~\eqref{RPG:e11} over $k$ from 0 to $s-1$ and dividing the result by $s$, we obtain
\begin{equation} \label{RPG:e12}
\left(\frac{1}{k} \sum_{s = 0}^{k-1} F(x_{s+1}) - F(x_*)\right) \leq \frac{\tilde{L}}{2k} \left( \|R_{x_0}^{-1} (x_*)\|_{x_0}^2 - \|R_{x_{k}}^{-1} (x_*)\|_{x_k}^2 \right) + \frac{\tilde{L} \whfirrev{}{\kappa_{\Omega}}}{2 \beta k} (F(x_{0}) - F(x_{*})).
\end{equation}
Since~\eqref{RPG:e9} implies $F(x_k)-F(x_*)$ is decreasing, \eqref{RPG:e30} follows immediately from~\eqref{RPG:e12}.
\end{proof}


\subsection{\whfirrev{}{Local Convergence Rate Analysis Using Riemannian Kurdyka-\L ojasiewicz Property}} \label{sec:CRA}

The KL property has been widely used for the convergence analysis of various convex and nonconvex algorithms in the Euclidean case, see e.g., \cite{ABRS2010,ABS2013,BST2014,LL2015}. In this section we will study the convergence of RPG base on the Riemannian Kurdyka-\L ojasiewicz (KL) property, introduced in~{\cite{KMP2000}} for the analytic setting and in~\cite{BCO2011} for the nonsmooth setting .


\begin{definition} \label{RPG:def:RKL}
A continuous function $f: \mathcal{M} \rightarrow \mathbb{R}$ is said to have the Riemannian KL property at $x \in \mathcal{M}$ if and only if there exists $\varepsilon \in (0, \infty]$, a neighborhood $U \subset \mathcal{M}$ of $x$, and a continuous concave function $\varsigma: [0, \varepsilon] \rightarrow [0, \infty)$ such that
\begin{itemize}
\item $\varsigma(0) = 0$,
\item $\varsigma$ is $C^1$ on $(0, \varepsilon)$,
\item $\varsigma' > 0$ on $(0, \whsecrev{}{\varepsilon})$,
\item For every $y \in U$ with $f(x) < f(y) < f(x) + \varepsilon$, we have 
\[
\varsigma'(f(y) - f(x)) \dist(0, \partial f(y)) \geq 1,
\]
where $\dist(0, \partial f(y)) = \inf\{ \|v\|_y : v \in \partial f(y) \}$ and $\partial$ denotes the Riemannian generalized subdifferential. The function $\varsigma$ is called the desingularising function.
\end{itemize}
\end{definition}

 Note that the definition of the Riemannian KL property is overall similar to the KL property in the Euclidean setting, except that related notions including $U$, $\partial f(y)$ and $\dist(0, \partial f(y))$ are all defined on a manifold. 
Theorem~\ref{RPG:th3} provides an approach to verify if a function on a manifold satisfies the Riemannian KL property based on a chart of the manifold and the Euclidean KL property. This theorem is a slight generalization of \cite[Theorem~4.3]{BCO2011}, where we directly impose the condition that $F \circ \phi^{-1}$ satisfies the Euclidean KL property rather than first require that $F$ is a continuous $\mathcal{C}$-function on a manifold. It enables us to 
establish the Riemannian KL property of a semialgebraic function on the Stiefel manifold (see Section~\ref{sec:KLStiefel}) without first resorting to the discussion of the abstract manifold property of the function, but only based on the basic semialgebraic properties of the Euclidean function.
Here we include the short proof of Theorem~\ref{RPG:th3} for the presentation to be self-contained. 

\begin{theorem}~\label{RPG:th3}
Given $x \in \mathcal{M}$, let $(\phi, \mathcal{U})$ denote a chart of $\mathcal{M}$ covering $x$, i.e., $x \in \mathcal{U}$. We assume that $F \circ \phi^{-1}: \mathbb{R}^{d} \rightarrow \mathbb{R}$ satisfies the Euclidean KL property at $\phi(x)$ with the desingularising function $\tilde{\varsigma}_x$, then $F$ satisfies the Riemannian KL property at $x$ with the desingularising function $\tilde{\varsigma}_x / C_x$, where $C_x$ is a constant.
\end{theorem}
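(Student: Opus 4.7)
The plan is to transport the Euclidean KL inequality for $F\circ\phi^{-1}$ through the chart $\phi$ back to a Riemannian KL inequality for $F$. The constant $C_x$ will arise from the bounded distortion between the Euclidean structure on $\phi(\mathcal{U})\subset\mathbb{R}^d$ and the Riemannian structure on $\mathcal{M}$ around $x$, captured through the differential of $\phi$.

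First I would shrink the Euclidean KL neighborhood of $\phi(x)$ to a ball $B$ contained in $\phi(\mathcal{U})$ and set $U=\phi^{-1}(B)$, which is an open neighborhood of $x$ in $\mathcal{M}$ since $\phi$ is a homeomorphism. Because $F(y)=(F\circ\phi^{-1})(\phi(y))$, the condition $F(x)<F(y)<F(x)+\varepsilon$ for $y\in U$ is equivalent to the corresponding Euclidean level-set condition at $\phi(y)\in B$, so both sides' thresholds match verbatim.

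The key step is to identify the Riemannian generalized subdifferential $\partial F(y)$ with a push-forward of the Euclidean subdifferential $\partial(F\circ\phi^{-1})(\phi(y))$. By definition, $\partial F(y)=\partial\hat{F}_y(0_y)$ where $\hat{F}_y=F\circ R_y$. Writing $\hat{F}_y=(F\circ\phi^{-1})\circ(\phi\circ R_y)$ and using $\D R_y(0_y)=\id$ so that $\D(\phi\circ R_y)(0_y)=\D\phi(y)$, the Clarke chain rule — applicable since $F\circ\phi^{-1}$ is locally Lipschitz and $\phi\circ R_y$ is $C^\infty$ — gives
\[
\partial F(y)=\D\phi(y)^{*}\,\partial(F\circ\phi^{-1})(\phi(y)),
\]
where $\D\phi(y)^{*}:\mathbb{R}^d\to\T_y\mathcal{M}$ is the adjoint of $\D\phi(y)$ with respect to $\langle\cdot,\cdot\rangle_y$ on $\T_y\mathcal{M}$ and the Euclidean inner product on $\mathbb{R}^d$. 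Since $\D\phi(y)$ is a linear isomorphism and the metric tensor is smooth and positive definite, the operator $\D\phi(y)\D\phi(y)^{*}$ is positive definite with smallest eigenvalue depending continuously on $y$. A compactness argument on a closed neighborhood $\overline{U}$ of $x$ then yields a constant $C_x>0$ with $\|\D\phi(y)^{*}u\|_y\geq C_x\|u\|_2$ for all $y\in\overline{U}$ and $u\in\mathbb{R}^d$, and hence
\[
\dist(0,\partial F(y))\geq C_x\,\dist(0,\partial(F\circ\phi^{-1})(\phi(y))).
\]

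Substituting this bound into the Euclidean KL inequality $\tilde{\varsigma}_x'(F(y)-F(x))\,\dist(0,\partial(F\circ\phi^{-1})(\phi(y)))\geq 1$ gives $(\tilde{\varsigma}_x/C_x)'(F(y)-F(x))\,\dist(0,\partial F(y))\geq 1$. Since $C_x>0$, the function $\varsigma:=\tilde{\varsigma}_x/C_x$ inherits the required properties from $\tilde{\varsigma}_x$ ($\varsigma(0)=0$, continuous concave, $C^1$ on $(0,\varepsilon)$, and $\varsigma'>0$ there), completing the verification of the Riemannian KL property at $x$ with desingularising function $\tilde{\varsigma}_x/C_x$. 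The main technical obstacle is the rigorous justification of the Clarke chain-rule identity between the retraction-based Riemannian subdifferential and the chart-based Euclidean subdifferential; once that is settled, the existence of $C_x$ is a routine compactness consequence of the smoothness of $\phi$ and of the Riemannian metric.
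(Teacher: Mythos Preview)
Your proposal is correct and follows essentially the same route as the paper: both transport the Euclidean KL inequality through the chart via the identity $\partial F(y)=\D\phi(y)^{\sharp}\,\partial(F\circ\phi^{-1})(\phi(y))$, and then absorb the distortion of $\D\phi(y)^{\sharp}$ (or its inverse) into the desingularising function using continuity/compactness near $x$. The only difference is that where you sketch the subdifferential identity via the retraction-based definition and the Clarke chain rule (and rightly flag it as the main technical point), the paper simply invokes~\cite[Proposition~2.5]{HP2011} for this identity and then bounds $\|[\D\phi(y)]^{-\sharp}\|$ directly by its value at $x$ on a small enough neighborhood.
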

\begin{proof}
Let $\tilde{F}$ denote $F \circ \phi^{-1}$. Since $\tilde{F}$ satisfies the Euclidean KL property at $\tilde{x} := \phi(x)$, there exists $\varepsilon > 0$, a neighborhood $\tilde{\mathcal{U}}$ of $\tilde{x}$ and a desingularising function $\tilde{\varsigma}:[0, \varepsilon] \rightarrow [0, \infty)$ such that for every $\tilde{y} \in \tilde{\mathcal{U}} \cap \phi(\mathcal{U}) \cap \{\tilde{z} \mid \tilde{F}(\tilde{x}) < \tilde{F}(\tilde{z}) < \tilde{F}(x) + \varepsilon \}$
\begin{equation} \label{RPG:e69}
\tilde{\varsigma}'(\tilde{F}(\tilde{y}) - \tilde{F}(\tilde{x})) \dist(0, \partial \tilde{F}(\tilde{y})) \geq 1.
\end{equation}
Let $y$ denote $\phi^{-1}(\tilde{y})$. Since $\phi$ is a bijection in $\mathcal{U}$, we have that $y$ can be any point in $\phi^{-1}(\tilde{\mathcal{U}} \cap \phi(\mathcal{U}) ) \cap \{z \mid F(x) < F(z) < F(x) + \varepsilon \}$.
Note that $\partial F(y) = [\D \phi(y)]^{\sharp} [\partial \tilde{F} (\tilde{y})]$ by~\cite[Proposition~2.5]{HP2011}, where $\sharp$ denotes the adjoint operator. Inequality~\eqref{RPG:e69} becomes
\[
\tilde{\varsigma}'(F(y) - F(x)) \dist(0, [\D \phi(y)]^{-\sharp} \partial F(y)) \geq 1.
\]
Since $\phi$ is a diffeomorphism, there exists a positive constant $c_0$ such that $\|[\D \phi(x)]^{-\sharp}\| \leq c_0$. Therefore, there exists a neighborhood $\mathcal{W}_x$ of $x$ such that $\|[\D \phi(z)]^{-\sharp}\| \leq 2 c_0$ for all $z \in \mathcal{W}_x$. Thus, for any $y \in \mathcal{W}_x \cap \phi^{-1}(\tilde{\mathcal{U}} \cap \phi(\mathcal{U}) ) \cap \{z \mid F(x) < F(z) < F(x) + \varepsilon \}$, it holds that 
\begin{align*}
\tilde{\varsigma}'(F(y) - F(x)) \dist(0, \partial F(y)) \geq & \frac{1}{2 c_0} 
\tilde{\varsigma}'(F(y) - F(x)) \dist(0, [\D \phi(y)]^{-\sharp} \partial F(y)) \geq \frac{1}{2 c_0}.
\end{align*}
It follows that $F$ satisfies the Riemannian KL property at $x$ with desingularising function~$2 c_0 \tilde{\varsigma}$.
\end{proof}

A Riemannian generalization of the uniformized Euclidean KL property \cite[Lemma 6]{BST2014} is given in Lemma~\ref{RPG:le15}. It shows that if the Riemannian KL property holds for every single point in a compact set with the same function value, then there exists a single desingularising function such that the Riemannian KL property holds for all points in the compact set.
Note that this generalization also appears implicitly in the proof of \cite[Theorem~4.8]{Hosseini2017}.

\begin{lemma} \label{RPG:le15}
Let $\bar\Omega$ be a compact set in $\mathcal{M}$ and let $\sigma: \mathcal{M} \rightarrow (-\infty, \infty]$ be a continuous 
function. Assume that $\sigma$ is constant on $\bar\Omega$ and satisfies the Riemannian KL property at each point of $\bar\Omega$. Then, there exist $\varpi > 0$, $\varepsilon > 0$ and a continuous concave function $\varsigma: [0, \varepsilon] \rightarrow [0, \infty)$ such that for all $\bar{u}$ in $\bar\Omega$ and all $u$ in the following intersection:
\[
\{ u \in \mathcal{M} : \dist(u, \bar\Omega) < \varpi \} \cap \{u \in \mathcal{M}: \sigma(\bar{u}) < \sigma(u) < \sigma(\bar{u}) + \varepsilon \},
\]
one has
\[
\varsigma'(\sigma(u) - \sigma(\bar{u})) \dist(0, \partial \sigma(u)) \geq 1.
\]
\end{lemma}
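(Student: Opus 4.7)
The plan is to mimic the standard Euclidean proof of the uniformized KL property (Lemma~6 in~\cite{BST2014}), with the necessary tweaks to accommodate the Riemannian setting. The whole argument reduces to (i)~extracting a finite subcover from the pointwise KL neighborhoods using compactness of $\bar\Omega$, (ii)~producing a single tube around $\bar\Omega$ contained in that subcover, and (iii)~gluing the pointwise desingularising functions into one by summation.

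First, I would apply Definition~\ref{RPG:def:RKL} at every point $\bar{u}\in\bar\Omega$ to obtain an open neighborhood $U_{\bar{u}}\subset\mathcal{M}$, a constant $\varepsilon_{\bar{u}}>0$, and a continuous concave desingularising function $\varsigma_{\bar{u}}:[0,\varepsilon_{\bar{u}}]\to[0,\infty)$, all enjoying the four listed properties. Since $\bar\Omega$ is compact, I can extract a finite subcover $U_{\bar{u}_1},\dots,U_{\bar{u}_p}$ of $\bar\Omega$, and then choose $\varpi>0$ small enough that the open tube $\{u\in\mathcal{M}:\dist(u,\bar\Omega)<\varpi\}$ is contained in $\bigcup_{i=1}^{p}U_{\bar{u}_i}$. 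This last step is the Lebesgue-number-style argument: the continuous function $u\mapsto\dist(u,\mathcal{M}\setminus\bigcup_i U_{\bar{u}_i})$ is strictly positive on the compact set $\bar\Omega$, hence attains a positive minimum, which we call $\varpi$.

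Set $\varepsilon:=\min_{1\le i\le p}\varepsilon_{\bar{u}_i}>0$ and define $\varsigma:[0,\varepsilon]\to[0,\infty)$ by $\varsigma(s)=\sum_{i=1}^{p}\varsigma_{\bar{u}_i}(s)$. Being a finite sum of continuous concave functions with value $0$ at $0$, the function $\varsigma$ is itself continuous, concave, vanishes at $0$, is $C^1$ on $(0,\varepsilon)$, and satisfies $\varsigma'(s)=\sum_i\varsigma'_{\bar{u}_i}(s)>0$ on $(0,\varepsilon)$. To verify the key inequality, let $\sigma_*$ denote the common value of $\sigma$ on $\bar\Omega$, pick any $\bar{u}\in\bar\Omega$ and any $u$ in the stated intersection. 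By the choice of $\varpi$ there exists an index $i$ with $u\in U_{\bar{u}_i}$, and since $\sigma_*<\sigma(u)<\sigma_*+\varepsilon\le\sigma_*+\varepsilon_{\bar{u}_i}$, the pointwise Riemannian KL inequality at $\bar{u}_i$ gives $\varsigma'_{\bar{u}_i}(\sigma(u)-\sigma_*)\,\dist(0,\partial\sigma(u))\ge 1$. Because every $\varsigma'_{\bar{u}_j}$ is positive, $\varsigma'(\sigma(u)-\sigma_*)\ge\varsigma'_{\bar{u}_i}(\sigma(u)-\sigma_*)$, and the uniformized inequality follows.

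The only step that is not entirely routine is extracting the tube width $\varpi$; everything else amounts to checking that the sum of the local desingularising functions inherits all the desired structural properties, and that the pointwise KL inequality at the selected $\bar{u}_i$ carries over to one stated with the fixed base value $\sigma_*$ (this is automatic because $\sigma$ is constant on $\bar\Omega$). Thus the proof is short provided one carefully invokes compactness of $\bar\Omega$ and continuity of the distance function on $\mathcal{M}$ to produce $\varpi$.
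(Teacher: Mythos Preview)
Your proposal is correct and follows essentially the same approach as the paper's own proof: both arguments extract a finite subcover from the pointwise KL neighborhoods by compactness, shrink to a uniform tube of width $\varpi$ around $\bar\Omega$ contained in that cover, set $\varepsilon$ to the minimum of the local $\varepsilon_i$, and define the uniform desingularising function as the sum $\varsigma=\sum_i\varsigma_i$. Your write-up is in fact slightly more detailed than the paper's (you spell out the Lebesgue-number argument for $\varpi$ and verify the structural properties of $\varsigma$), but the underlying strategy is identical.
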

\begin{proof}
Let $\sigma^*$ be the value of $\sigma$ over $\bar\Omega$. Let the compact set $\bar\Omega$ be covered by a finite number of open balls $\mathbb{B}(u_i, \varpi_i)$ (with $u_i \in \bar\Omega$ for $i = 1, \ldots, p$) on which the Riemannian KL property holds. For each $i = 1, \ldots, p$, we denote the corresponding desingularising function by $\varsigma_i:[0, \varepsilon_i) \rightarrow [0, \infty)$ with $\varepsilon_i > 0$. For each $u \in \mathbb{B}(u_i, \varpi_i) \cap \{u \mid \sigma^* < \sigma(u) < \sigma^* + \varepsilon_i\}$, we have
\[
\varsigma_i'(\sigma(u) - \sigma(u_i)) \dist(0, \partial \sigma (u)) = \varsigma_i'( \sigma(u) - \sigma^* ) \dist(0, \partial \sigma (u)) \geq 1.
\]
Choose $\varpi$ sufficiently small so that
\[
\mathcal{U}_{\varpi} := \left\{ x \in \mathcal{M} \mid \dist(x, \bar\Omega) \leq \varpi \right\} \subset \cup_{i = 1}^p \mathbb{B}(u_i, \varpi_i).
\]
Let $\varepsilon = \min(\varepsilon_i, i = 1, \ldots, p) > 0$ and 
\[
\varsigma(s) = \sum_{i = 1}^p \varsigma_i(s), \quad \forall s \in [0, \varepsilon).
\]
It follows that for all $u \in \mathcal{U}_{\varpi} \cap \{ u \mid \sigma^* < \sigma(u) < \sigma^* + \varepsilon \}$, it holds that
\[
\varsigma'(\sigma(u) - \sigma^*) \dist(0, \partial \sigma(u)) = \sum_{i = 1}^p \varsigma_i'(\sigma(u) - \sigma^*) \dist(0, \partial \sigma(u)) \geq 1,
\]
which completes the proof.
\end{proof}



{
Assumption~\ref{RPG:as19} will be used for the convergence analysis in this subsection. When the manifold $\mathcal{M}$ is the Euclidean space, such assumption has been made in e.g.,~\cite{LL2015}.
\begin{assumption} \label{RPG:as19}
$f:\mathcal{M} \rightarrow \mathbb{R}$ is locally Lipschitz continuously differentiable. 
\end{assumption}
}

{
In order to study the convergence analysis of Algorithm~\ref{RPG:a1} based on the Riemannian KL property, \whsecrev{}{we also need two results regarding to the retraction and vector  transport, given in Lemmas~~\ref{RPG:le16} and \ref{RPG:le19},   respectively. The proofs of these two  lemmas will be deferred to Appendix~\ref{app:proofs}.} Note that Lemma~\ref{RPG:le16} is a variant of~\cite[Proposition~7.4.5, Corollary~7.4.6]{AMS2008}, and the proof is partially the same as that of \cite[Proposition~7.4.5]{AMS2008}.   }
\begin{lemma} \label{RPG:le16}
{ \whsecrev{}{Let $\bar\Omega \subset \mathcal{M}$ be a compact set.} Then for any given $\delta_T > 0$, 
there exists a positive constant $\kappa$ such that $\dist(x, R_x(\eta_x)) \leq \kappa \|\eta_x\|_x$ for all $x \in \bar{\Omega}$ and for all $\eta_x \in \mathcal{B}(0_x, \delta_T)$.
}
\end{lemma}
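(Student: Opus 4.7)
The plan is to bound the Riemannian distance $\dist(x, R_x(\eta_x))$ from above by the length of the smooth curve $t \mapsto R_x(t\eta_x)$ connecting $x$ to $R_x(\eta_x)$, then use compactness of the closed tangent ball bundle over $\bar\Omega$ to extract a uniform constant $\kappa$ from the continuous function describing the operator norm of $\D R$.

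First, for each $(x, \eta_x)$ with $x \in \bar\Omega$ and $\|\eta_x\|_x \leq \delta_T$, define $\gamma_{x, \eta_x}: [0,1] \to \mathcal{M}$ by $\gamma_{x, \eta_x}(t) = R_x(t \eta_x)$. Since $R$ is smooth and $R_x(0_x) = x$, this is a smooth path from $x$ to $R_x(\eta_x)$, so
\begin{equation*}
\dist(x, R_x(\eta_x)) \;\leq\; \int_0^1 \|\dot\gamma_{x, \eta_x}(t)\|_{\gamma_{x, \eta_x}(t)} \, dt \;=\; \int_0^1 \bigl\| \D R_x(t \eta_x)[\eta_x] \bigr\|_{R_x(t\eta_x)} \, dt,
\end{equation*}
where $\D R_x(t\eta_x)$ denotes the differential of $R_x:\T_x\mathcal{M} \to \mathcal{M}$ at $t\eta_x$, viewed as a linear map from $\T_x\mathcal{M}$ to $\T_{R_x(t\eta_x)}\mathcal{M}$. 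Writing $\|\D R_x(t\eta_x)\|_{\mathrm{op}}$ for its operator norm with respect to the Riemannian metrics on these two tangent spaces, linearity gives
\begin{equation*}
\bigl\| \D R_x(t\eta_x)[\eta_x] \bigr\|_{R_x(t\eta_x)} \;\leq\; \|\D R_x(t\eta_x)\|_{\mathrm{op}} \cdot \|\eta_x\|_x.
\end{equation*}

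Second, set $\mathcal{K} = \{(x, \eta_x) \in \T\mathcal{M} : x \in \bar\Omega,\, \|\eta_x\|_x \leq \delta_T\}$. Covering $\bar\Omega$ with finitely many charts that also trivialize the tangent bundle, $\mathcal{K}$ becomes a finite union of sets that are closed and bounded in their trivializations, hence $\mathcal{K}$ is compact in $\T\mathcal{M}$. The map $\mathcal{K} \times [0,1] \to \mathbb{R}: (x, \eta_x, t) \mapsto \|\D R_x(t\eta_x)\|_{\mathrm{op}}$ is continuous by smoothness of $R$ and of the Riemannian metric, and hence attains a finite maximum, which I will call $\kappa$. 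Integrating the pointwise estimate over $t \in [0,1]$ yields $\dist(x, R_x(\eta_x)) \leq \kappa \|\eta_x\|_x$ for every $(x, \eta_x) \in \mathcal{K}$, and this in particular covers all $\eta_x \in \mathcal{B}(0_x, \delta_T)$.

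The main technical subtlety is verifying the compactness of $\mathcal{K}$ and the continuity of the operator-norm function on it, since the operator norm of $\D R_x(t\eta_x)$ is computed between two different tangent spaces that depend on $(x, \eta_x, t)$. This is handled cleanly by passing to a finite trivializing atlas of $T\mathcal{M}$ over $\bar\Omega$, in which the operator norm becomes a continuous function of finitely many continuous matrix-valued quantities. Once compactness of $\mathcal{K}$ is in place, the argument is a direct application of the fundamental theorem of calculus to the length of $\gamma_{x,\eta_x}$, and no additional structural hypothesis on $R$ is required beyond smoothness.
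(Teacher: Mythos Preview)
Your proof is correct. Both you and the paper bound $\dist(x,R_x(\eta_x))$ by the length of the curve $t\mapsto R_x(t\eta_x)$ and then invoke compactness, but the compactness is applied at different orders. You bound the speed $\|\D R_x(t\eta_x)[\eta_x]\|$ directly by the operator norm $\|\D R_x(t\eta_x)\|_{\mathrm{op}}\cdot\|\eta_x\|_x$ and take the supremum of $\|\D R_x(\cdot)\|_{\mathrm{op}}$ over the compact closed disk bundle; the paper instead normalizes to $\tilde\eta_x=\eta_x/\|\eta_x\|_x$, bounds the covariant acceleration $\bigl\|\tfrac{D}{dt}\tfrac{d}{dt}R_x(t\tilde\eta_x)\bigr\|\leq b_2$ on a compact set of unit vectors, and integrates to obtain $\|\tfrac{d}{dt}R_x(t\tilde\eta_x)\|\leq 1+b_2 t$, hence $\kappa=1+b_2\delta_T/2$. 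Your route is more elementary, using only $C^1$ information about $R$ rather than $C^2$. The paper's route, following \cite[Proposition~7.4.5]{AMS2008}, has the minor advantage of making explicit that $\kappa\to 1$ as $\delta_T\to 0$, which reflects $\D R_x(0_x)=\id$, but this refinement is not needed for the lemma as stated.
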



\begin{lemma} \label{RPG:le19}
{Let $\xi$ be a locally Lipschitz continuous vector field on $\mathcal{M}$. Given a constant $a$ and a compact set $\bar\Omega \subset \mathcal{M}$, there exist positive constants $\mu$ and $L_c$ such that $\|\xi_y - \mathcal{T}_{R_{\eta_x}}^{- \sharp} (\xi_x + a \eta_x)\|_y \leq L_c \|\eta_x\|_x$ for any $x$ and $\eta_x \in \T_x \mathcal{M}$ satisfying $\|\eta_x\|_x < \mu$, where $y = R_x(\eta_x)$ and $\mathcal{T}_R$ is the vector transport by differentiated the retraction $R$. 
}
\end{lemma}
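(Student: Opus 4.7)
The plan is to bound the quantity $\xi_y - \mathcal{T}_{R_{\eta_x}}^{-\sharp}(\xi_x + a \eta_x)$ by inserting parallel translation along the geodesic joining $x$ to $y$ and then splitting into three summands:
\[
\xi_y - \mathcal{T}_{R_{\eta_x}}^{-\sharp}(\xi_x + a \eta_x) = \underbrace{\bigl(\xi_y - \mathcal{P}_{\gamma}^{1 \leftarrow 0} \xi_x\bigr)}_{\text{(I)}} + \underbrace{\bigl(\mathcal{P}_{\gamma}^{1 \leftarrow 0} - \mathcal{T}_{R_{\eta_x}}^{-\sharp}\bigr)\xi_x}_{\text{(II)}} - \underbrace{a\,\mathcal{T}_{R_{\eta_x}}^{-\sharp}\eta_x}_{\text{(III)}},
\]
where $\gamma$ is the geodesic with $\gamma(0)=x$ and $\gamma(1)=y$. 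I would first use the compactness of $\bar\Omega$ together with smoothness of $R$ to pick $\mu>0$ small enough that: (a) $R_x(\mathcal{B}(0_x,\mu))$ is contained in a fixed compact neighborhood $\tilde\Omega$ of $\bar\Omega$ for every $x\in\bar\Omega$, (b) $\dist(x,R_x(\eta_x)) < i(\tilde\Omega)$ so the geodesic $\gamma$ is well-defined, and (c) $R_x$ restricted to $\mathcal{B}(0_x,\mu)$ is a diffeomorphism, so that $\mathcal{T}_{R_{\eta_x}}^{-\sharp}$ is well-defined.

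Term (I) is handled by local Lipschitz continuity of $\xi$ together with Lemma~\ref{RPG:le16}: since parallel translation is isometric,
\[
\|\xi_y - \mathcal{P}_\gamma^{1\leftarrow 0}\xi_x\|_y = \|\mathcal{P}_\gamma^{0\leftarrow 1}\xi_y - \xi_x\|_x \leq L_v\,\dist(x,y) \leq L_v \kappa \|\eta_x\|_x.
\]
Term (III) is bounded by observing that $(x,\eta_x)\mapsto\mathcal{T}_{R_{\eta_x}}^{-\sharp}$ is smooth on the compact set $\{(x,\eta_x): x\in\bar\Omega, \|\eta_x\|_x\le\mu\}$, so its operator norm is uniformly bounded by some constant $C_2$, yielding $\|a\,\mathcal{T}_{R_{\eta_x}}^{-\sharp}\eta_x\|_y \leq |a|\,C_2\,\|\eta_x\|_x$.

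The main obstacle and the only nontrivial ingredient is term (II), where I need a uniform $O(\|\eta_x\|_x)$ bound on the difference between the two vector transports. The key observation is that both $\mathcal{P}_{\gamma}^{1 \leftarrow 0}$ (viewed as a function of $\eta_x=\mathrm{Exp}_x^{-1}(y)$ when $R=\mathrm{Exp}$, or more generally along the geodesic determined by $x$ and $R_x(\eta_x)$) and $\mathcal{T}_{R_{\eta_x}}^{-\sharp}$ coincide with the identity at $\eta_x = 0_x$, and both depend smoothly on $(x,\eta_x)$ on our compact domain. A first-order Taylor expansion in $\eta_x$ (carried out in a finite atlas covering $\bar\Omega$, using the smoothness of $R$, the Riemannian metric, and the Christoffel symbols) therefore gives a constant $C_1$ with $\|\mathcal{P}_\gamma^{1\leftarrow 0}-\mathcal{T}_{R_{\eta_x}}^{-\sharp}\|_{\mathrm{op}} \leq C_1\|\eta_x\|_x$ uniformly over $\bar\Omega$. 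Combining this with the uniform bound $\sup_{x\in\bar\Omega}\|\xi_x\|_x \leq M$ (continuity of $\xi$ on the compact set $\bar\Omega$) controls (II) by $C_1 M \|\eta_x\|_x$. Summing the three bounds gives the claim with $L_c = L_v\kappa + C_1 M + |a|C_2$.
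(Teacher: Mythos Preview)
Your proposal is correct and follows essentially the same three-term decomposition as the paper's proof; the constants you obtain ($L_v\kappa + C_1 M + |a|C_2$) match the paper's ($L_v\kappa + L_{cc}\sup_{x\in\bar\Omega}\|\xi_x\|_x + aL_t$) term by term. The only difference is that for term~(II) the paper invokes \cite[Lemma~3.5]{HGA2014} as a black box to get $\|\mathcal{P}_{\gamma}^{1\leftarrow 0}\xi_x - \mathcal{T}_{R_{\eta_x}}^{-\sharp}\xi_x\|_y \leq L_{\bar x}\|\xi_x\|_x\|\eta_x\|_x$ locally (and then patches via a finite cover), whereas you sketch this bound directly from the fact that both transports are smooth and agree with the identity at $\eta_x=0_x$.
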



Now, we are in position to  show the convergence of the iterates $\{x_k\}$ generated by Algorithm~1 to a single stationary point. The structure of the proof follows the one  for \cite[Theorem~1]{BST2014}.
\begin{theorem}\label{RPG:single}
Let $\{x_k\}$ denote the sequence generated by Algorithm~\ref{RPG:a1} and $\mathcal{S}$ denote the set of all accumulation points.
Suppose Assumptions~\ref{RPG:as10}, ~\ref{RPG:as3} and~\ref{RPG:as19} hold.  
We further assume that $F = f + g$ satisfies the Riemannian KL property at every point in $\mathcal{S}$. Then, 
\begin{align*}
\sum_{k=0}^\infty \dist(x_k,x_{k+1})<\infty. \numberthis\label{eq:finite}
\end{align*}
It follows that $\mathcal{S}$ only contains a single point.
\end{theorem}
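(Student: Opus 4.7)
My plan follows the Bolte--Sabach--Teboulle template adapted to the Riemannian setting, making essential use of the structural results already established in the paper (Lemma~\ref{RPG:le3} for descent, the stationarity identity \eqref{RPG:e27}, Lemma~\ref{RPG:le15} for a uniformised KL, Lemma~\ref{RPG:le19} for a subgradient bound, and Lemma~\ref{RPG:le16} for converting tangent-norm sums into distance sums). First I would record the preliminary facts: by Assumption~\ref{RPG:as10} the whole sequence stays in the compact sublevel set $\Omega_{x_0}$, so $\mathcal{S}$ is nonempty and compact; by Lemma~\ref{RPG:le3} the sequence $\{F(x_k)\}$ is monotonically decreasing and bounded below, hence converges to some value $F_*$; by continuity of $F$, $F \equiv F_*$ on $\mathcal{S}$. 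I would handle the degenerate case $F(x_{k_0}) = F_*$ separately: then $F(x_k) = F_*$ for all $k \geq k_0$ and Lemma~\ref{RPG:le3} forces $\eta_{x_k}^* = 0$, so the sequence is eventually constant and \eqref{eq:finite} is trivial. From now on assume $F(x_k) > F_*$ for all $k$.

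Next I would invoke the uniformised Riemannian KL property (Lemma~\ref{RPG:le15}) on the compact set $\mathcal{S}$ to obtain $\varpi, \varepsilon > 0$ and a single desingularising function $\varsigma$ that works on the tube $\{u : \dist(u,\mathcal{S}) < \varpi\} \cap \{u : F_* < F(u) < F_* + \varepsilon\}$. Since $\dist(x_k,\mathcal{S}) \to 0$ and $F(x_k) \downarrow F_*$, there exists $k_0$ such that for all $k \geq k_0$ the iterate $x_k$ lies in this tube, and
\[
\varsigma'\bigl(F(x_k) - F_*\bigr)\,\dist\!\bigl(0, \partial F(x_k)\bigr) \;\geq\; 1.
\]
The key step, which I expect to be the main obstacle, is to bound $\dist(0,\partial F(x_k))$ from above by $\|\eta_{x_{k-1}}^*\|_{x_{k-1}}$. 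From \eqref{RPG:e27} and \eqref{RPG:e26} we have
\[
\grad f(x_k) - \mathcal{T}_{R_{\eta_{x_{k-1}}^*}}^{-\sharp}\!\bigl(\grad f(x_{k-1}) + \tilde{L}\,\eta_{x_{k-1}}^*\bigr) \;\in\; \partial F(x_k).
\]
Since $\grad f$ is locally Lipschitz (Assumption~\ref{RPG:as19}) and all iterates lie in the compact set $\Omega_{x_0}$, Lemma~\ref{RPG:le19} applied with $\xi = \grad f$ and $a = \tilde{L}$ yields a constant $L_c > 0$ with $\dist(0,\partial F(x_k)) \leq L_c\,\|\eta_{x_{k-1}}^*\|_{x_{k-1}}$ for all $k$ large (the $\|\eta_{x_{k-1}}^*\| \to 0$ bound from \eqref{RPG:e82} ensures the smallness hypothesis of Lemma~\ref{RPG:le19}).

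Combining the KL inequality with concavity of $\varsigma$ in the usual way,
\[
\varsigma\bigl(F(x_k)-F_*\bigr) - \varsigma\bigl(F(x_{k+1})-F_*\bigr) \;\geq\; \varsigma'\bigl(F(x_k)-F_*\bigr)\bigl(F(x_k)-F(x_{k+1})\bigr),
\]
and using the descent estimate $F(x_k)-F(x_{k+1}) \geq \beta\|\eta_{x_k}^*\|_{x_k}^2$ from Lemma~\ref{RPG:le3} together with the subgradient bound above, I obtain
\[
\|\eta_{x_k}^*\|_{x_k}^2 \;\leq\; \tfrac{L_c}{\beta}\,\|\eta_{x_{k-1}}^*\|_{x_{k-1}}\,\Delta_k, \qquad \Delta_k := \varsigma\bigl(F(x_k)-F_*\bigr) - \varsigma\bigl(F(x_{k+1})-F_*\bigr).
\]
Applying $2\sqrt{ab} \leq a + b$ gives $2\|\eta_{x_k}^*\|_{x_k} \leq \|\eta_{x_{k-1}}^*\|_{x_{k-1}} + \tfrac{L_c}{\beta}\Delta_k$, and summing over $k \geq k_0$ the right-hand side telescopes (using $\varsigma \geq 0$), so $\sum_{k} \|\eta_{x_k}^*\|_{x_k} < \infty$. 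Finally, applying Lemma~\ref{RPG:le16} on the compact set $\Omega_{x_0}$ (valid since $\|\eta_{x_k}^*\| \to 0$, so eventually $\eta_{x_k}^* \in \mathcal{B}(0_{x_k},\delta_T)$) gives $\dist(x_k,x_{k+1}) = \dist(x_k, R_{x_k}(\eta_{x_k}^*)) \leq \kappa\,\|\eta_{x_k}^*\|_{x_k}$, which establishes \eqref{eq:finite}. This implies $\{x_k\}$ is a Cauchy sequence with respect to the Riemannian distance, hence convergent, so $\mathcal{S}$ is a singleton.
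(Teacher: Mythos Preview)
Your proposal is correct and follows essentially the same approach as the paper: both use Lemma~\ref{RPG:le15} for a uniformised desingularising function on $\mathcal{S}$, the stationarity identity \eqref{RPG:e27}/\eqref{RPG:e26} together with Lemma~\ref{RPG:le19} for the bound $\dist(0,\partial F(x_k))\le L_c\|\eta_{x_{k-1}}^*\|$, concavity of $\varsigma$ combined with the descent Lemma~\ref{RPG:le3}, and Lemma~\ref{RPG:le16} to pass from $\sum_k\|\eta_{x_k}^*\|<\infty$ to $\sum_k\dist(x_k,x_{k+1})<\infty$. Your write-up is in fact slightly more explicit than the paper's, which at the key recursion simply refers to ``the same algebra manipulation as in the proof of~\cite[Theorem~1]{BST2014}'' where you spell out the $2\sqrt{ab}\le a+b$ and telescoping steps.
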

\begin{proof}
First note that the global convergence result in Theorem~\ref{RPG:globaltheo} implies that every point in $\mathcal{S}$
is a stationary point. Since  $\lim_{k \rightarrow \infty} \|\eta_{x_k}^*\|_{x_k} = 0$, there exists a $\delta_T>0$ such that $\|\eta_{x_k}^*\|_{x_k}\leq\delta_T$ for all $k$. {Thus, the application of Lemma~\ref{RPG:le16} implies that
\begin{align*}
\dist(x_{k},x_{k+1})=\dist(x_k,R_{x_k}(\eta_{x_k^*}))\leq\kappa \|\eta_{x_k}^*\|_{x_k}\rightarrow 0.\numberthis\label{eq:kwrev01a}
\end{align*}
Then by \cite[Remark 5]{BST2014}, we know that $\mathcal{S}$ is a compact set.
} 
Moreover, since $F(x_k)$ is nonincreasing and $F$ is continuous, $F$ has the same value at all the points in $\mathcal{S}$.  Therefore, by Lemma~\ref{RPG:le15}, there exists a single  desingularising function, denoted $\varsigma$, for the Riemannian KL property  of $F$ to hold at all the points in $\mathcal{S}$.

Let $x_*$ be a point in $\mathcal{S}$. Assume there exists $\bar k$ such that $x_{\bar k}=x_*$. Since $F(x_k)$ is non-increasing, it must hold $F(x_{\bar k})=F(x_{{\bar k}+1})$. By Lemma~\ref{RPG:le3}, we have $\eta_{x_{\bar k}}^*=0$,  $x_{\bar k}=x_{{\bar k}+1}$, \eqref{eq:finite} holds evidently. 

In the case when $F(x_k)>F(x_*)$ for all $k$, since $F(x_k)\rightarrow F(x_*)$, $\dist(x_k,\mathcal{S})\rightarrow 0$, by the Riemannian KL property of $F$ on $\mathcal{S}$, there exists an $l>0$  such that 
\begin{align*}
\varsigma'(F(x_k) - F(x_*)) \dist(0, \partial F(x_k)) \geq 1\quad\mbox{for all }k>l.
\end{align*}
It follows that 
\begin{align*}
\varsigma'(F(x_k) - F(x_*))\geq \dist(0, \partial F(x_k))^{-1}\quad\mbox{for all }k>l.\numberthis\label{eq:kwrev01}
\end{align*}
Since $\lim_{k \rightarrow \infty} \|\eta_{x_k}^*\|_{x_k} = 0$, there exists a constant $\hat{k}_0 > 0$ such that $\|\eta_{x_k}^*\|_{x_k} < \mu$ for all $k > \hat{k}_0$, where $\mu$ is defined in Lemma~\ref{RPG:le19}. Therefore, we have
\begin{align} \label{RPG:e96}
\|\grad f(R_{x_{k}}(\eta_{x_{k}}^*)) - \mathcal{T}_{R_{\eta_{x_{k}}^*}}^{- \sharp} (\grad f(x_{k}) + \tilde{L} \eta_{x_{k}}^*)\|_{R_{x_{k}}(\eta_{x_{k}}^*)} \leq L_c \|\eta_{x_{k}}^*\|_{x_{k}}
\end{align}
for all $k \geq \hat{k}_0$. By~\eqref{RPG:e26}, it holds that
\begin{equation} \label{RPG:e97}
	\grad f(R_{x_{k}}(\eta_{x_{k}}^*)) - \mathcal{T}_{R_{\eta_{x_{k}}^*}}^{- \sharp} (\grad f(x_{k}) + \tilde{L}  \eta_{x_{k}}^*) = \grad f(x_{k + 1}) + \zeta_{x_{k+1}} \in \partial F(x_{k + 1}).
\end{equation}
Therefore,~\eqref{RPG:e96} and~\eqref{RPG:e97} yield
\begin{equation} \label{RPG:e66}
\dist(0, \partial F(x_k)) \leq L_c \|\eta_{x_{k-1}}^*\|_{x_{k-1}},
\end{equation}
for all $k > \hat{k}_0$. Inserting this into \eqref{eq:kwrev01} gives 
\begin{align*}\varsigma'(F(x_k) - F(x_*))\geq L_c^{-1}\|\eta_{x_{k-1}}^*\|_{x_{k-1}}^{-1}\quad\mbox{for all }k>\hat{l}:=\max(k_0,l).\numberthis\label{eq:kerev02}
\end{align*}
Moreover, the concavity of $\varsigma$ yields that 
\begin{align}
\varsigma(F(x_k)-F(x_*))-\varsigma(F(x_{k+1})-F(x_*))&\geq \varsigma'(F(x_k)-F(x_*))(F(x_k)-F(x_{k+1}))\\
&\geq L_c^{-1}\beta\frac{\|\eta_{x_{k}}^*\|_{x_{k}}^2}{\|\eta_{x_{k-1}}^*\|_{x_{k-1}}}\quad\mbox{for all }k>\hat{l}:=\max(k_0,l),
\label{RPG:e99}\end{align}
where the second inequality follows from Lemma~\ref{RPG:le3} and \eqref{eq:kerev02}. Finally, the same algebra manipulation as in the proof of  \cite[Theorem~1]{BST2014} yields that 
\begin{align*}
\sum_{k=0}^\infty\|\eta_{x_{k}}^*\|_{x_{k}}<\infty,
\end{align*}
and \eqref{eq:finite} follows immediately due to \eqref{eq:kwrev01a}.

\end{proof}
Similar to the Euclidean case, if $F$ further satisfies the Riemannian KL property with the desingularising function being of the form\footnote{{When the desingularising function has the form $\varsigma(t) = \frac{C}{\theta} t^{\theta}$ for some $C > 0$, $\theta \in (0, 1]$, we say that $F$ satisfies the Riemannian KL property with an exponent $\theta$, as in the Euclidean case. }} $\varsigma(t) = \frac{C}{\theta} t^{\theta}$ for some $C > 0$, $\theta \in (0, 1]$, then the local convergence rate can be established. \whsecrev{}{The proof is overall similar to that  for the Euclidean case, and it is included in Appendix~\ref{appen:localKL} for completeness.}
\begin{theorem} \label{RPG:LocalRateKL}
Let $\{x_k\}$ denote the sequence generated by Algorithm~\ref{RPG:a1} and $\mathcal{S}$ denote the set of all accumulation points.
Suppose Assumptions~\ref{RPG:as10}, ~\ref{RPG:as3} and~\ref{RPG:as19} hold.  
We further assume that $F = f + g$ satisfies the Riemannian KL property at every point in $\mathcal{S}$ with the desingularising function having the form of $\varsigma(t) = \frac{C}{\theta} t^{\theta}$ for some $C > 0$, $\theta \in (0, 1]$. The accumulation point, denoted $x_*$, is unique by Theorem~\ref{RPG:single}. Then
\begin{itemize}
\item If $\theta = 1$, then there exists $k_1$ such that $x_k = x_*$ for all $k > k_1$.
\item If $\theta \in [\frac{1}{2}, 1)$, then there exist constants $C_r > 0$ and $d \in (0, 1)$ such that for all $k$
\[
\dist(x_k, x_*) < C_r d^{k};
\]
\item If $\theta \in (0, \frac{1}{2})$, then there exists a positive constant $\tilde{C}_r$ such that for all $k$
\[
\dist(x_k, x_*) < \tilde{C}_r k^{\frac{-1}{1 - 2 \theta}}.
\]
\end{itemize}
\end{theorem}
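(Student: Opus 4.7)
The strategy is to adapt the standard Kurdyka--\L{}ojasiewicz convergence-rate template (as in \cite{ABRS2010,ABS2013,BST2014} in the Euclidean case) to the present Riemannian setting. Three ingredients are already in place for RPG: the sufficient-decrease estimate $r_{k-1} - r_k \geq \beta \|\eta_{x_{k-1}}^*\|_{x_{k-1}}^2$ of Lemma~\ref{RPG:le3}, where $r_k := F(x_k) - F(x_*) \geq 0$; the relative-error bound $\dist(0, \partial F(x_k)) \leq L_c \|\eta_{x_{k-1}}^*\|_{x_{k-1}}$ from~\eqref{RPG:e66}; and the explicit desingularising function $\varsigma(t) = (C/\theta) t^\theta$ with $\varsigma'(t) = C t^{\theta - 1}$. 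By Theorem~\ref{RPG:single} and continuity of $F$, $r_k \downarrow 0$. For all sufficiently large $k$ with $r_k > 0$, the uniformized Riemannian KL property (Lemma~\ref{RPG:le15}) gives
\[
r_k^{1-\theta} \leq C\, \dist(0,\partial F(x_k)) \leq C L_c\, \|\eta_{x_{k-1}}^*\|_{x_{k-1}},
\]
and squaring then using Lemma~\ref{RPG:le3} yields the core recurrence
\[
r_k^{2(1-\theta)} \leq \alpha\,(r_{k-1} - r_k), \qquad \alpha := C^2 L_c^2/\beta.
\]

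From this recurrence the three regimes are read off separately. If $\theta = 1$, the KL inequality reduces to $C\, \dist(0, \partial F(x_k)) \geq 1$ whenever $r_k > 0$, which is incompatible with $\dist(0, \partial F(x_k)) \leq L_c \|\eta_{x_{k-1}}^*\|_{x_{k-1}} \to 0$; hence $r_k = 0$ for all $k \geq k_1$, so Lemma~\ref{RPG:le3} forces $\eta_{x_k}^* = 0_{x_k}$ and $x_{k+1} = R_{x_k}(0_{x_k}) = x_k$ for $k \geq k_1$, and the constant tail must coincide with the limit $x_*$. If $\theta \in [1/2, 1)$, then $2(1-\theta) \leq 1$; since $r_k \to 0$ we eventually have $r_k \leq r_k^{2(1-\theta)} \leq \alpha(r_{k-1} - r_k)$, so $r_k \leq \bigl(\alpha/(1+\alpha)\bigr)^{k - k_2} r_{k_2}$ decays geometrically. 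If $\theta \in (0, 1/2)$, then $p := 2(1-\theta) > 1$ and the standard real-analysis lemma applied to $r_k^p \leq \alpha(r_{k-1} - r_k)$ gives $r_k = O(k^{-1/(p-1)}) = O(k^{-1/(1-2\theta)})$.

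To convert these residual rates into bounds on $\dist(x_k, x_*)$, Lemma~\ref{RPG:le16} together with the triangle inequality yields $\dist(x_k, x_*) \leq \kappa S_k$, where $S_k := \sum_{j \geq k} \|\eta_{x_j}^*\|_{x_j}$. Reusing the concavity-plus-descent estimate of~\eqref{RPG:e99}, namely $\varsigma(r_k) - \varsigma(r_{k+1}) \geq (\beta/L_c)\,\|\eta_{x_k}^*\|^2/\|\eta_{x_{k-1}}^*\|$, combined with $2ab \leq a^2 + b^2$ gives
\[
2 \|\eta_{x_k}^*\|_{x_k} \leq \|\eta_{x_{k-1}}^*\|_{x_{k-1}} + \frac{L_c}{\beta}\bigl(\varsigma(r_k) - \varsigma(r_{k+1})\bigr),
\]
and telescoping from $k$ to $\infty$ yields the tail bound $S_k \leq \|\eta_{x_{k-1}}^*\|_{x_{k-1}} + (L_c/\beta)\,\varsigma(r_k)$. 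In the geometric regime, both terms on the right-hand side are of order $d^k$ for some $d \in (0,1)$, producing the claimed $\dist(x_k, x_*) \leq C_r d^k$. In the polynomial regime, one derives an analogous recurrence directly for $S_k$ from the same telescoped estimate and applies the same real-analysis lemma to $S_k$ to arrive at the $k^{-1/(1-2\theta)}$ rate. The main obstacle is this last step: the bookkeeping needed to transfer the sharp exponent from $r_k$ to the tail sum $S_k$, since the naive substitution $\varsigma(r_k) = O(r_k^\theta)$ into the tail bound only gives the weaker rate $O(k^{-\theta/(1-2\theta)})$, and one has to repeat the KL-plus-descent argument one level higher (on $S_k$) to recover the claimed exponent.
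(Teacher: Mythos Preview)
Your proposal is correct and follows the same KL template as the paper. The only difference is organizational: the paper never establishes a separate rate for $r_k$, but goes directly to the recurrence for the tail sum $\Delta_k := S_k$, which is precisely the ``one level higher'' argument you flag as the remaining obstacle. Concretely, from your telescoped bound
\[
S_k \leq \|\eta_{x_{k-1}}^*\|_{x_{k-1}} + \frac{CL_c}{\theta\beta}\, r_k^{\theta},
\]
the paper uses your own KL-plus-subgradient estimate $r_k^{1-\theta} \leq C L_c\, \|\eta_{x_{k-1}}^*\|_{x_{k-1}}$ to substitute $r_k^{\theta} \leq (C L_c\, \|\eta_{x_{k-1}}^*\|_{x_{k-1}})^{\theta/(1-\theta)}$, and then observes $\|\eta_{x_{k-1}}^*\|_{x_{k-1}} = S_{k-1} - S_k$. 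This yields the self-contained recurrence
\[
S_k \leq (S_{k-1} - S_k) + b_1 (S_{k-1} - S_k)^{\theta/(1-\theta)},
\]
which is exactly \cite[(12)]{AB2009}, and all three regimes ($\theta = 1$, $\theta \in [1/2,1)$, $\theta \in (0,1/2)$) follow in one stroke from \cite[Theorem~2]{AB2009}, with $\dist(x_k,x_*) \leq \kappa\, S_k$ by Lemma~\ref{RPG:le16}. So the detour through rates for $r_k$ is unnecessary, and the ``bookkeeping obstacle'' you identified is resolved by this single substitution using ingredients already in your hands.
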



\subsection{\whfirrev{}{Restriction of  Semialgebraic Function onto  Stiefel Manifold satisfies  Riemannian KL }} \label{sec:KLStiefel}

It has been shown in e.g.,~\cite{ABRS2010,BST2014} that a semialgebraic function on $\mathbb{R}^n$ has the Euclidean KL property. A natural question is:  Given a submanifold $\mathcal{M}$ of $\mathbb{R}^n$ and a semialgebraic function $F$ on $\mathbb{R}^n$, does the function defined by restricting $F$ onto $\mathcal{M}$ have the Riemannian KL property? In this section, we will \whsecrev{}{give a sufficient condition under which the answer is positive and then verify the condition for $\mathcal{M}$ being the Stiefel manifold.} 
\whsecrev{}{We emphasize the Stiefel manifold here since it is used in our experiments, see Section~\ref{RPG:sect:NumExp}. }

The definitions of semialgebraic sets, mappings and functions are given in Definition~\ref{RPG:def:Semi}. More can be found in e.g.,~\cite{BCR1998}.
\begin{definition}[Semialgebraic sets, mappings and functions] \label{RPG:def:Semi}
\begin{enumerate}
\item A subset $\mathcal{S}$ of $\mathbb{R}^n$ is called semialgebraic if there exists a finite number of polynomial function $g_{i j}, h_{i j}: \mathbb{R}^n \rightarrow \mathbb{R}$ such that
\[
\mathcal{S} = \cup_{j = 1}^p \cap_{i = 1}^q \{ u \in \mathbb{R}^n \mid g_{i j}(u) = 0 \hbox{ and } h_{i j}(u) < 0\}.
\]
\item Let $\mathcal{A} \subseteq \mathbb{R}^m$ and $\mathcal{B} \subseteq \mathbb{R}^n$ be two semialgebraic sets. A mapping $: \mathcal{A} \rightarrow \mathcal{B}$ is semialgebraic if its graph is semialgebraic in $\mathbb{R}^{m + n}$. If $n = 1$, then the mapping is also called a semialgebraic function.
\end{enumerate}
\end{definition}

The following properties about semialgebraic sets and mappings will be used later. 
Their proofs can be found in e.g.,~\cite{BCR1998,BDL2007,BDLS2007,ABRS2010,BST2014}.
\begin{proposition}[Properties of semialgebraic sets and mappings] \label{RPG:pr1}
\begin{enumerate}
\item \label{RPG:semi:p1} Generalized inverse of semialgebraic mappings are \whsecrev{}{semialgebraic};
\item \label{RPG:semi:p2} Composition of semialgebraic functions or mappings are semialgebraic;
\item \label{RPG:semi:p3} Continuous semialgebraic functions satisfy the Euclidean KL property with desingularising function in the form of $\varsigma(t) = \frac{C}{\theta} t^\theta$, where $\theta \in (0, 1]$ and $C > 0$.
\item \label{RPG:semi:p4} Let $\mathcal{S}$ be a semialgebraic set of $\mathbb{R}^{m + n}$ and $\pi: \mathbb{R}^{m + n} \rightarrow \mathbb{R}^m$ be the projection on the space of the first $m$ coordinates. Then $\pi(\mathcal{S})$ is a semialgebraic set of $\mathbb{R}^m$.
\end{enumerate}
\end{proposition}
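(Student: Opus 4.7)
The plan is to handle each of the four listed properties in turn, using the Tarski--Seidenberg theorem as the central tool and invoking deeper results from semialgebraic geometry only for the last item.

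For property~\ref{RPG:semi:p4}, which states that the image of a semialgebraic set under a coordinate projection is semialgebraic, this is essentially the content of the Tarski--Seidenberg theorem. I would treat this as the basic input, since a self-contained proof requires either quantifier elimination for the theory of real closed fields or the construction of a cylindrical algebraic decomposition, both of which are lengthy. Once this is granted, properties~\ref{RPG:semi:p1} and~\ref{RPG:semi:p2} follow quickly.

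For property~\ref{RPG:semi:p1}, the graph of the generalized inverse of a semialgebraic mapping $\phi:\mathcal{A}\to\mathcal{B}$ is obtained from the graph of $\phi$ by swapping the two blocks of coordinates, so it is defined by exactly the same polynomial equations and inequalities after relabeling the variables; semialgebraicity is therefore preserved. For property~\ref{RPG:semi:p2}, given semialgebraic $\phi:\mathcal{A}\subseteq\mathbb{R}^m\to\mathcal{B}\subseteq\mathbb{R}^n$ and $\psi:\mathcal{B}\to\mathcal{C}\subseteq\mathbb{R}^p$, the graph of $\psi\circ\phi$ is the image under the projection onto the $(x,z)$ coordinates of
\[
\{(x,y,z)\in\mathbb{R}^{m+n+p}:(x,y)\in\mathrm{graph}(\phi)\text{ and }(y,z)\in\mathrm{graph}(\psi)\}.
\]
Both sets appearing in the intersection are semialgebraic in the larger ambient space (a semialgebraic set in $\mathbb{R}^{m+n}$ lifts trivially to one in $\mathbb{R}^{m+n+p}$ by adding free variables), so their intersection is semialgebraic, and property~\ref{RPG:semi:p4} then delivers the conclusion.

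The main obstacle is property~\ref{RPG:semi:p3}, which is the \L ojasiewicz--Kurdyka gradient inequality for continuous semialgebraic functions and is substantially deeper than the others. My plan would be to invoke the curve selection lemma for semialgebraic sets to reduce, at a critical value, to controlling the function along a semialgebraic arc, and then use the Puiseux series expansion of a semialgebraic function of one variable to extract a rational exponent $\theta\in(0,1]$ governing the rate at which the function approaches the critical value relative to the norm of its subgradient. This exponent then yields a desingularising function of the form $\varsigma(t)=\frac{C}{\theta}t^\theta$. Each of these ingredients (curve selection, Puiseux expansions of semialgebraic functions) itself rests on the cell decomposition of semialgebraic sets, which is a nontrivial consequence of the Tarski--Seidenberg theorem; I would cite rather than reprove these, and focus the writeup on properties~\ref{RPG:semi:p1} and~\ref{RPG:semi:p2} where the reduction to property~\ref{RPG:semi:p4} is explicit.
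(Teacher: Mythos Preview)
Your proposal is correct and your outline is the standard route: Tarski--Seidenberg gives property~\ref{RPG:semi:p4}, from which properties~\ref{RPG:semi:p1} and~\ref{RPG:semi:p2} follow by the graph manipulations you describe, while property~\ref{RPG:semi:p3} is the deeper \L ojasiewicz inequality requiring curve selection and Puiseux expansions. However, you should be aware that the paper does not actually prove this proposition at all; it is stated as background and the proofs are deferred to the references~\cite{BCR1998,BDL2007,BDLS2007,ABRS2010,BST2014}. So there is no ``paper's own proof'' to compare against --- your sketch simply goes further than the paper does, and what you have written would be an appropriate expansion of the citation into an outline for a reader unfamiliar with semialgebraic geometry.
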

%

\whsecrev{}{
\begin{theorem} \label{RPG:th6}
Let $F: \mathbb{R}^{n} \rightarrow \mathbb{R}$ be a continuous semialgebraic function, $\mathcal{M}$ be an $d$-dimensional embedded submanifold of $\mathbb{R}^n$. For any $x \in \mathcal{M}$, let $(\mathcal{U}, \phi_x)$ be a chart covering $x$. If $\phi_x^{-1}: \phi_x(\mathcal{U}) \subset \mathbb{R}^d \rightarrow \mathbb{R}^n$ is a semialgebraic mapping, then the function by restricting $F$ onto $\mathcal{M}$, $\tilde{F}: \mathcal{M} \rightarrow \mathbb{R}$, has the Riemannian KL property at any point $X$ of $\mathcal{M}$ with desingularising function in the form of $\varsigma(t) = \frac{C_X}{\theta_X} t^{\theta_X}$, where $\theta_X \in (0, 1]$ and $C_X > 0$.
\end{theorem}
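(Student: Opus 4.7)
The plan is to reduce this statement to the existing machinery: Theorem~\ref{RPG:th3} (chart-based transfer of the KL property from Euclidean to Riemannian setting) together with Proposition~\ref{RPG:pr1} (preservation properties of semialgebraic objects and the KL property for continuous semialgebraic functions). The key observation is that under the hypotheses $\tilde F \circ \phi_X^{-1}$ is a continuous semialgebraic function on a Euclidean open set, so it automatically has the Euclidean KL property with a desingularising function of the desired power form, and Theorem~\ref{RPG:th3} then pushes this property onto $\mathcal{M}$ with the same functional form.

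More concretely, fix $X \in \mathcal{M}$ and choose a chart $(\mathcal{U}, \phi_X)$ covering $X$ whose inverse $\phi_X^{-1}$ is a semialgebraic mapping, as assumed. First I would argue that $\phi_X(\mathcal{U}) \subset \mathbb{R}^d$ is a semialgebraic set: since $\phi_X^{-1}$ is semialgebraic its graph is a semialgebraic subset of $\mathbb{R}^{d+n}$, and projecting onto the first $d$ coordinates gives $\phi_X(\mathcal{U})$, which is semialgebraic by item~\ref{RPG:semi:p4} of Proposition~\ref{RPG:pr1}. Next, because $\tilde F$ is the restriction of $F$ to $\mathcal{M}$ and $\phi_X^{-1}$ lands in $\mathcal{M}$, we have $\tilde F \circ \phi_X^{-1} = F \circ \phi_X^{-1}$ on $\phi_X(\mathcal{U})$. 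Applying item~\ref{RPG:semi:p2} of Proposition~\ref{RPG:pr1} to the composition of the semialgebraic function $F:\mathbb{R}^n \to \mathbb{R}$ and the semialgebraic mapping $\phi_X^{-1}$ shows $\tilde F \circ \phi_X^{-1}$ is a semialgebraic function; continuity is inherited from $F$ and the smoothness of $\phi_X^{-1}$.

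Item~\ref{RPG:semi:p3} of Proposition~\ref{RPG:pr1} then provides that $\tilde F \circ \phi_X^{-1}$ satisfies the Euclidean KL property at $\phi_X(X)$ with desingularising function $\tilde\varsigma_X(t) = \frac{\bar C}{\bar\theta} t^{\bar\theta}$ for some $\bar C > 0$ and $\bar\theta \in (0,1]$. Invoking Theorem~\ref{RPG:th3} with this $\tilde\varsigma_X$ yields that $\tilde F$ satisfies the Riemannian KL property at $X$ with desingularising function $\tilde\varsigma_X / C_X$ for some constant $C_X > 0$ coming from the norm bound on $[\D\phi_X(\cdot)]^{-\sharp}$; since dividing by a positive constant only rescales $\bar C$ while leaving the exponent $\bar\theta$ unchanged, the resulting desingularising function still has the form $\frac{C_X'}{\theta_X} t^{\theta_X}$ with $\theta_X = \bar\theta$, as required.

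The only nontrivial piece is ensuring the composition argument genuinely produces a semialgebraic function in the technical sense used in Proposition~\ref{RPG:pr1}; this is where it is essential that $\phi_X^{-1}$ (rather than $\phi_X$ itself) be semialgebraic, since we need the composition to be formed on the Euclidean side where the semialgebraic calculus applies. Once this is in place, every remaining step is a direct citation of Proposition~\ref{RPG:pr1} and Theorem~\ref{RPG:th3}, so no separate obstacle is anticipated beyond carefully matching domains.
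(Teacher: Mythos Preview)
Your proposal is correct and follows essentially the same route as the paper: show $\tilde F\circ\phi_X^{-1}=F\circ\phi_X^{-1}$ is semialgebraic via Proposition~\ref{RPG:pr1}\ref{RPG:semi:p2}, invoke Proposition~\ref{RPG:pr1}\ref{RPG:semi:p3} for the Euclidean KL property with a power-type desingularising function, and then apply Theorem~\ref{RPG:th3}. Your extra observations---that $\phi_X(\mathcal{U})$ is semialgebraic via projection and that the constant rescaling in Theorem~\ref{RPG:th3} preserves the power form---are correct refinements the paper leaves implicit.
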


\begin{proof}
Since $F$ is a semialgebraic function and $\phi^{-1}$ is a semialgebraic mapping, it follows from ~\ref{RPG:semi:p2} in Proposition~\ref{RPG:pr1} that $F \circ \phi_X^{-1}$ is a semialgebraic function. 
Since the image of $\phi_X^{-1}$ is in $\St(p, n)$ and $\tilde{F}$ is the restriction of $F$ to $\St(p, n)$, we have $F \circ \phi_{X}^{-1} = \tilde{F} \circ \phi_X^{-1}$. It follows that $\tilde{F} \circ \phi_X^{-1}$ is a semialgebraic function, and thus satisfies the Euclidean KL property at any points in the domain $\phi_X(\mathcal{U} \cap \mathcal{M})$ with a desingularising function of the form $\frac{\tilde{C}}{\theta} t^{\theta}$ for certain $\tilde{C} > 0$ and $\theta \in (0, 1]$, see \ref{RPG:semi:p3} in Proposition~\ref{RPG:pr1}. 
Together with Theorem~\ref{RPG:th3}, the proof is completed.
\end{proof}
}

\whsecrev{}{Next, we will prove that the assumptions in Theorem~\ref{RPG:th6} hold when the manifold is the Stiefel manifold. Specifically, for any $x$ in the Stiefel manifold, we will construct a chart of $x$ that satisfies the condition in Theorem~\ref{RPG:th6}.} The Stiefel manifold is $\St(p, n) = \{X \in \mathbb{R}^{n \times p} \mid X^T X = I_p \}$. The tangent space of $\St(p, n)$ at $X$ is $\T_X \St(p, n) = \{V \in \mathbb{R}^{n \times p} \mid X^T V + V^T X = 0\}$. 
We consider the Riemannian metric  inherited from the Euclidean metric,
\[
\inner[X]{\eta_X}{\xi_X} = \trace(\eta_X^T \xi_X),
\]
where $\eta_X, \xi_X \in \T_X \St(p, n)$. 
The normal space, which is the orthonormal complement space of $\T_X \St(p, n)$, is given by $\N_x \St(p, n) = \{X S \mid S = S^T\}$. 

The construction \whsecrev{}{of the chart for the Stiefel manifold} relies on the following result.
\begin{lemma} \label{RPG:le17}
Let $X \in \St(p, n)$, $B_X \in \mathbb{R}^{np \times (np - \frac{1}{2} p (p+1))}$ be an orthonormal basis of $\T_X \St(p, n)$, and $H_X \in \mathbb{R}^{np \times \frac{1}{2} p (p+1))}$ be an orthonormal basis of $\N_x \St(p, n)$. Then there exists a positive constant $\delta_X$ such that the mapping
\begin{align*}
\tilde{\phi}_X&: \mathfrak{B}(X, \delta_X) \rightarrow \mathbb{R}^{np} \\
&: Y \mapsto \begin{bmatrix} B_X & H_X \end{bmatrix}^T \left( \frac{1}{2} \vvec( X (Y^T Y - I_p) ) + (I_p \otimes Y ) (I_p \oplus (X^T Y) )^{-1} \vvec(2 I_p) - \vvec(X) \right)
\end{align*}
is a diffeomorphism, where $\mathfrak{B}(X, \delta_X) = \{Y \in \mathbb{R}^{n \times p} \mid \|Y - X\|_{\E} < \delta_X \}$, $\otimes$ denotes the Kronecker product, $\oplus$ denotes the Kronecker sum, and $\vvec$ denotes the operation that stacks the columns of its matrix arguments into a single vector. 
\end{lemma}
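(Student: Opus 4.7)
The plan is to apply the inverse function theorem to $\tilde{\phi}_X$, viewed as a smooth map between open subsets of $\mathbb{R}^{np}$ after identifying $\mathbb{R}^{n \times p}$ with $\mathbb{R}^{np}$ via vectorization. The three steps I would carry out are: (i) verify $\tilde{\phi}_X$ is smooth near $X$ and $\tilde{\phi}_X(X) = 0$; (ii) compute $\D \tilde{\phi}_X(X)$ in closed form; and (iii) show that this differential is invertible.

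For step (i), the crucial observation is that because $I_p$ and $X^T Y$ are both $p \times p$, the Kronecker sum collapses: $I_p \oplus (X^T Y) = I_p \otimes I_p + I_p \otimes (X^T Y) = I_p \otimes (I_p + X^T Y)$. Using the identity $(B^T \otimes A)\vvec(Z) = \vvec(AZB)$, this yields $(I_p \otimes Y)(I_p \oplus (X^T Y))^{-1}\vvec(2 I_p) = 2\vvec\!\bigl(Y(I_p + X^T Y)^{-1}\bigr)$. At $Y = X$, $I_p + X^T X = 2 I_p$ is invertible, so by continuity $\tilde{\phi}_X$ is well-defined and smooth on a sufficiently small $\mathfrak{B}(X,\delta_X)$; substituting $Y = X$ shows that the parenthesized expression equals $0 + \vvec(X) - \vvec(X) = 0$, confirming $\tilde{\phi}_X(X) = 0$.

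For step (ii), I would differentiate each piece at $Y = X$ in an arbitrary direction $V \in \mathbb{R}^{n\times p}$. The first term contributes $\tfrac{1}{2}\vvec\!\bigl(X(V^T X + X^T V)\bigr)$; for the simplified second term, the derivative-of-inverse rule combined with $X^T X = I_p$ gives $V - \tfrac{1}{2} X X^T V$. Summing the two and invoking the cancellation $\tfrac{1}{2} X X^T V - \tfrac{1}{2} X X^T V = 0$ leaves
\begin{equation*}
\D \tilde{\phi}_X(X)[V] = [B_X\ H_X]^T \vvec\!\Bigl(V + \tfrac{1}{2} X V^T X\Bigr).
\end{equation*}
For step (iii), since $B_X$ and $H_X$ are orthonormal bases of the orthogonal subspaces $\T_X\St(p,n)$ and $\N_X\St(p,n)$ whose sum is $\mathbb{R}^{n\times p}$, the matrix $[B_X\ H_X]$ is orthogonal, so $[B_X\ H_X]^T$ is invertible. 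It remains to prove that $V \mapsto V + \tfrac{1}{2} X V^T X$ is a bijection on $\mathbb{R}^{n\times p}$. Writing $V = XA + X_\perp B$ in the orthogonal decomposition (where $X_\perp$ is an orthonormal basis of the complement of the column space of $X$) gives $V + \tfrac{1}{2} X V^T X = X(A + \tfrac{1}{2} A^T) + X_\perp B$; decomposing $A$ into symmetric and antisymmetric parts turns $A \mapsto A + \tfrac{1}{2} A^T$ into multiplication by $\tfrac{3}{2}$ on the symmetric part and by $\tfrac{1}{2}$ on the antisymmetric part, hence invertible. Together these yield invertibility of $\D \tilde{\phi}_X(X)$, and the inverse function theorem furnishes a $\delta_X > 0$ on which $\tilde{\phi}_X$ is a diffeomorphism.

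The main obstacle is algebraic rather than conceptual: one must spot the Kronecker-sum simplification and carry out the differentiation without sign errors. After that, invertibility of the Jacobian reduces to an elementary decomposition argument.
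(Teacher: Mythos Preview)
Your overall plan---compute the Jacobian of $\tilde{\phi}_X$ at $Y=X$ and invoke the inverse function theorem---is exactly the paper's approach. The difference lies in the Jacobian you obtain. The paper arrives at the simpler expression
\[
\D\tilde{\phi}_X(X)[V] \;=\; [B_X\ H_X]^T\,\vvec(V),
\]
which is immediately invertible because $[B_X\ H_X]$ is orthogonal; no further decomposition is needed. The reason you get the extra term $\tfrac{1}{2}XV^TX$ is your reading of the Kronecker sum. You take $I_p\oplus(X^TY)=I_p\otimes I_p+I_p\otimes(X^TY)=I_p\otimes(I_p+X^TY)$, which is the textbook convention, but the paper is actually using $I_p\oplus M$ to denote the Sylvester operator $M\otimes I_p+I_p\otimes M$; this is visible both from its own derivative formula (where differentiating $I_p\oplus(X^TY)$ in $Y$ produces $I_p\oplus(X^TV)$, impossible under your reading) and from the proof of the subsequent lemma, where $(I_p\oplus(X^TY))^{-1}\vvec(2I_p)=\vvec(S)$ with $(X^TY)S+S(Y^TX)=2I_p$. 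Under the paper's intended meaning the $\tfrac{1}{2}XV^TX$ term cancels against a matching piece coming from the derivative of the inverse, yielding just $\vvec(V)$.

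That said, your argument is internally consistent and correct for the formula as you read it: the map $V\mapsto V+\tfrac{1}{2}XV^TX$ is indeed a bijection by your eigenvalue-on-symmetric/antisymmetric-parts argument, so invertibility of the full Jacobian still follows. In short, the route is the same, the conclusion is the same, but the paper's computation is cleaner because of the intended (if nonstandard) use of~$\oplus$.
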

\begin{proof}
For any $V \in \mathbb{R}^{n \times p}$, we have
\begin{align*}
\D \tilde{\phi}_X(Y)[V] =& \begin{bmatrix} B_X & H_X \end{bmatrix}^T \Big( \frac{1}{2} \vvec( X (Y^T V + V^T Y) ) + (I_p \otimes V ) (I_p \oplus (X^T Y) )^{-1} \vvec(2 I_p) \\
&- (I_p \otimes Y ) (I_p \oplus (X^T Y) )^{-1} (I_p \oplus X^T V ) (I_p \oplus (X^T Y) )^{-1}\vvec(2 I_p) \Big).
\end{align*}
It follows that
\[
\D \tilde{\phi}_X(Y)[V] |_{Y = X} = \begin{bmatrix} B_X & H_X \end{bmatrix}^T \vvec(V),
\]
which implies $J_{\tilde{\phi}_X}(Y)$ at $Y = X$ is a surjective operator. Therefore, the determinant of $J_{\tilde{\phi}_X}(X)$ is nonzero. \whsecrev{}{The conclusion follows from the inverse function theorem.} 
\end{proof}

\whsecrev{}{We are now ready to construct  the chart of $\St(p, n)$  that satisfies the condition in Theorem~\ref{RPG:th6}.}
\begin{lemma}[A chart of $\St(p, n)$] \label{RPG:le18}
The pair $(\mathcal{W}_X, \phi_X)$ is a chart of the embedded submanifold $\St(p, n)$, where $\mathcal{W}_X = \mathfrak{B}(X, \delta_X) \cap \St(p, n)$, $\phi_X = E^T \tilde{\phi}_X$, $\delta_X$ and $\tilde{\phi}_X$ are defined in Lemma~\ref{RPG:le17}, $E = \begin{bmatrix} e_1 & e_2 & \ldots & e_{np - \frac{1}{2} p (p + 1))} \end{bmatrix} \in \mathbb{R}^{np \times (np - \frac{1}{2} p (p + 1))}$ with $e_j$ being the $j$-th canonical basis of $\mathbb{R}^{np}$.
In addition, the inverse of $\phi_X$ is
\[
\phi_X^{-1}: \mathcal{E} \rightarrow \St(p, n): v \mapsto \tilde{\phi}_X^{-1} E v = R_X(B_X v),
\]
where 
$
\mathcal{E} = \left\{v \in \mathbb{R}^{np - \frac{1}{2} p (p + 1)} \mid 
\begin{bmatrix} v \\ 0 \end{bmatrix} \in \tilde{\phi}_X(\mathfrak{B}(X, \delta_X))
\right\}$,
and $R_X(\eta_X) = (X + \eta_X) (I_p + \eta_X^T \eta_X)^{-1/2}$ is the retraction by the polar decomposition~\cite[(4.7)]{AMS2008}. \whsecrev{}{Moreover, $\phi_X^{-1}$ is a semialgebraic mapping.}
\end{lemma}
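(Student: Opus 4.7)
The statement packages three claims: that $(\mathcal{W}_X,\phi_X)$ is a chart, that its inverse admits two equivalent closed forms, and that this inverse is semialgebraic. I plan to prove the inverse identity first, since granted the identity the chart property follows almost for free, and the semialgebraic conclusion is then a routine inspection of the closed-form polar-retraction expression.

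The crux is the identity $\tilde\phi_X(R_X(B_Xv))=Ev$ for every sufficiently small $v$. I would substitute $Y=R_X(\eta)=(X+\eta)(I_p+\eta^T\eta)^{-1/2}$ with $\eta=B_Xv\in\T_X\St(p,n)$ into the defining formula for $\tilde\phi_X$ in Lemma~\ref{RPG:le17}. The first summand $\tfrac{1}{2}\vvec(X(Y^TY-I_p))$ vanishes because $Y\in\St(p,n)$. For the second summand, set $S=(I_p+\eta^T\eta)^{1/2}$, so $YS=X+\eta$. Combining this with $Y^TY=I_p$ and the tangency relation $X^T\eta+\eta^TX=0$ yields the Sylvester equation $(X^TY)S+S(Y^TX)=2I_p$, which in vectorized form reads $(I_p\oplus X^TY)\vvec(S)=\vvec(2I_p)$. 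Hence $(I_p\oplus X^TY)^{-1}\vvec(2I_p)=\vvec(S)$, and $(I_p\otimes Y)\vvec(S)=\vvec(YS)=\vvec(X+\eta)$. Subtracting $\vvec(X)$ leaves $\vvec(\eta)$, and because $\eta=B_Xv\in\sspan B_X$, applying $[B_X,H_X]^T$ yields $\begin{bmatrix}v\\0\end{bmatrix}=Ev$, as claimed.

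Granted the identity, $\phi_X(R_X(B_Xv))=E^TEv=v$, so $v\mapsto R_X(B_Xv)$ is a right inverse of $\phi_X$ on $\mathcal{E}$, and it coincides with $\tilde\phi_X^{-1}(Ev)$ by injectivity of $\tilde\phi_X$ on $\mathfrak{B}(X,\delta_X)$. Injectivity of $\phi_X$ on $\mathcal{W}_X$ is inherited from $\tilde\phi_X$, and smoothness from that of $\tilde\phi_X$ and $R_X$. Since $R_X\circ B_X$ is a local diffeomorphism from a neighborhood of $0$ in $\mathbb{R}^{np-\frac{1}{2}p(p+1)}$ into $\St(p,n)$, its image $\mathcal{W}_X$ is open in $\St(p,n)$ by invariance of domain, so $(\mathcal{W}_X,\phi_X)$ is indeed a chart, with the domain $\mathcal{E}$ as stated. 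For semialgebraicity, the closed form $\phi_X^{-1}(v)=(X+B_Xv)(I_p+(B_Xv)^T(B_Xv))^{-1/2}$ is a composition of polynomial operations in $v$ with the map $A\mapsto A^{-1/2}$ on symmetric positive-definite $A$; the graph $\{(A,S):A\succ 0,\ S\succ 0,\ S^2A=I_p\}$ is cut out by polynomial equalities together with the semialgebraic positive-definite conditions (e.g., positivity of leading principal minors), so this map is semialgebraic, and item~\ref{RPG:semi:p2} of Proposition~\ref{RPG:pr1} then gives that $\phi_X^{-1}$ is semialgebraic.

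The hardest step is the Sylvester-equation identification: recognising that the vectorization of the polar factor $(I_p+\eta^T\eta)^{1/2}$ is exactly what $(I_p\oplus X^TY)^{-1}\vvec(2I_p)$ produces when $Y$ lies on the Stiefel manifold is the geometric content that makes the definition of $\tilde\phi_X$ compatible with the polar retraction; once this is isolated, the remainder is routine matrix algebra and standard manifold bookkeeping.
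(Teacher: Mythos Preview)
Your Sylvester computation is correct and is exactly the key observation the paper uses. However, there is a genuine gap in your chart argument. You write that ``injectivity of $\phi_X$ on $\mathcal{W}_X$ is inherited from $\tilde{\phi}_X$'', but $\phi_X=E^T\tilde\phi_X$ discards the last $\tfrac{1}{2}p(p+1)$ coordinates, so injectivity of $\tilde\phi_X$ does \emph{not} give injectivity of $\phi_X$ on $\mathcal{W}_X$ unless you already know those last coordinates vanish for every $Y\in\mathcal{W}_X$. You only established this for $Y$ of the special form $R_X(B_Xv)$. The same issue shows up when you equate the image of $R_X\circ B_X$ with $\mathcal{W}_X$: you have only shown the image is \emph{contained} in $\mathcal{W}_X$, not equal to it. The fix is to rerun your Sylvester computation for an arbitrary $Y\in\St(p,n)\cap\mathfrak{B}(X,\delta_X)$: let $S$ solve $(X^TY)S+S(Y^TX)=2I_p$; then $X^T(YS-X)+(YS-X)^TX=0$, so $YS-X\in\T_X\St(p,n)$ and $H_X^T\vvec(YS-X)=0$. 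This gives $H_X^T\tilde\phi_X(Y)=0$ for \emph{every} $Y\in\mathcal{W}_X$, which closes both gaps at once. The paper does precisely this (and also checks the complementary case $Z\notin\St(p,n)$, needed to invoke the slice-chart criterion~\cite[Proposition~3.3.2]{AMS2008}).

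For the semialgebraicity of $\phi_X^{-1}$, your route genuinely differs from the paper's. You use the closed form $\phi_X^{-1}(v)=(X+B_Xv)(I_p+(B_Xv)^T(B_Xv))^{-1/2}$ and observe that $A\mapsto A^{-1/2}$ on SPD matrices has a semialgebraic graph; composing with the polynomial pieces via Proposition~\ref{RPG:pr1}\ref{RPG:semi:p2} finishes. The paper instead works with the graph of $\tilde\phi_X|_{\mathcal{W}_X}$: it clears the Kronecker-sum inverse so the graph is cut out by polynomial constraints, projects (item~\ref{RPG:semi:p4}) to get semialgebraic domain and codomain, and then invokes item~\ref{RPG:semi:p1} (inverses of semialgebraic maps are semialgebraic). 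Your argument is more direct and exploits the explicit inverse formula you just proved; the paper's argument is more indirect but avoids having to discuss the inverse-square-root map separately. Both are valid.
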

\begin{proof}
The proof relies on the submanifold property given in~\cite[Proposition~3.3.2]{AMS2008}. Note that $\tilde{\phi}_X$ is a chart of $\mathbb{R}^{n \times p}$, which is the embedding space of $\St(p, n)$. We only need to show that for any $Y$ in $\mathcal{U}_X \cap \St(p, n)$, the last $\frac{1}{2} p (p + 1)$ entries of $\tilde{\phi}_X(Y)$ are zeros \whthirev{}{, and for any $Z \notin \mathcal{U}_X \cap \St(p, n)$, the last $\frac{1}{2} p (p + 1)$ entries of $\tilde{\phi}_X(Y)$ are not all zeros}. To the end, for any $Y \in \mathcal{U}_X \cap \St(p, n)$, it holds that
\begin{align*}
\tilde{\phi}_X(Y) =& \begin{bmatrix} B_X & H_X \end{bmatrix}^T \left( (I_p \otimes Y ) (I_p \oplus (X^T Y) )^{-1} \vvec(2 I_p) - \vvec(X) \right) \\
=& \begin{bmatrix} B_X & H_X \end{bmatrix}^T \vvec \left( Y S - X \right),
\end{align*}
where $S$ is the solution of the Lyapunov equation $(X^T Y) S + S (Y^T X) = 2 I_p$. Since $X^T (Y S - X) + (Y S - X)^T X = 0$, we have $Y S - X \in \T_X \St(p, n)$. Therefore, it holds that $H_X^T \vvec(Y S - X) = 0$, which implies that the last $\frac{1}{2} p (p + 1)$ entries of $\tilde{\phi}_X(Y)$ are zeros. 
\whthirev{}{If $Z \notin \mathcal{U}_X \cap \St(p, n)$, then
\begin{align*}
\tilde{\phi}_X(Z) =& \begin{bmatrix} B_X & H_X \end{bmatrix}^T \left( \frac{1}{2} \mathrm{vec}(X (Z^T Z - I_p)) +  (I_p \otimes Y ) (I_p \oplus (X^T Y) )^{-1} \vvec(2 I_p) - \vvec(X) \right) \\
=& \begin{bmatrix} B_X & H_X \end{bmatrix}^T \left( \frac{1}{2} \mathrm{vec}(X (Z^T Z - I_p)) + \vvec \left( Y \tilde{S} - X \right) \right),
\end{align*}
where $\tilde{S}$ is the solution of the Lyapunov equation $(X^T Y) S + S (Y^T X) = 2 I_p$. Since $X (Z^T Z - I_p)$ is a non-zero matrix in $\N_X \St(p, n)$, and $Y \tilde{S} - X \in \T_X \St(p, n)$, we have that the last the last $\frac{1}{2} p (p + 1)$ entries of $\tilde{\phi}_X(Z)$ are not all zeros. }
Lastly, it is easy to verify that $\phi_X \circ R_X(B_x v) = v$ for $v \in \mathcal{W}_X$. Therefore, $\phi_X^{-1}(v) = R_X(B_X v)$.

Let $\tilde{\varphi}_X$ denote the function by restricting $\tilde{\phi}_X$ onto $\mathfrak{B}(X, \delta_X) \cap \St(p, n)$. 
The graph $\mathcal{G}_X$ of $\tilde{\varphi}_X$ is given by
\begin{align*}
\mathcal{G}_X=\{&(Y,Z) \in \mathbb{R}^{n \times {2 p}} \mid \\
&  \begin{bmatrix} B_X & H_X \end{bmatrix}^T \left( \frac{1}{2} \vvec( X (Y^T Y - I_p) ) + (I_p \otimes Y ) (I_p \oplus (X^T Y) )^{-1} \vvec(2 I_p) - \vvec(X) \right)=\vvec(Z), \\
&\|Y - X\|_{\E}^2 - \delta_X^2 < 0, Y^T Y = I_p
\},
\end{align*}
which is equivalent to
\begin{align}
\mathcal{G}_X=\{&(Y, Z) \in \mathbb{R}^{n \times {2 p}} \mid \nonumber \\
& (I_p \oplus (X^T Y) ) (I_p \otimes Y^T )\left( \begin{bmatrix} B_X & H_X \end{bmatrix} \vvec(Z) - \frac{1}{2} \vvec( X (Y^T Y - I_p) ) + \vvec(X)  \right) = \vvec(2 I_p), \nonumber \\
& (I_{np} - I_p \otimes (Y Y^T) ) \left( \begin{bmatrix} B_X & H_X \end{bmatrix} \vvec(Z) - \frac{1}{2} \vvec( X (Y^T Y - I_p) ) + \vvec(X) \right) = 0, \nonumber \\
&\|Y - X\|_{\E}^2 - \delta_X^2 < 0, Y^T Y = I_p
\}. \label{RPG:e90}
\end{align}
Since all the constraints in~\eqref{RPG:e90} are given by polynomials, the set $\mathcal{G}_X$ is semialgebraic by Definition~\ref{RPG:def:Semi}. Define the projection $\pi_Z: \mathbb{R}^{n \times 2 p} \rightarrow \mathbb{R}^{n \times p}: (Z, Y) \mapsto Z$ and the projection $\pi_Y: \mathbb{R}^{n \times 2 p} \rightarrow \mathbb{R}^{n \times p}: (Z, Y) \mapsto Y$. It follows from~\ref{RPG:semi:p4} in Proposition~\ref{RPG:pr1} that the set $\mathcal{Y}_X = \pi_Y(\mathcal{G}_X)$ and $\mathcal{Z}_X = \pi_Z(\mathcal{G}_X)$ are semialgebraic sets. Therefore, by definition, $\tilde{\varphi}_X: \mathcal{Y}_X \rightarrow \mathcal{Z}_X$ is a semialgebraic mapping. By~\ref{RPG:semi:p1} in Proposition~\ref{RPG:pr1}, its inverse $\tilde{\varphi}_X^{-1}: \mathcal{Z}_X \rightarrow \mathcal{Y}_X$ is also a semialgebraic mapping. By the definition of $\phi_X^{-1}$ in Lemma~\ref{RPG:le18}, we have $\phi_X^{-1} = \tilde{\varphi}_X^{-1} E$. Therefore, {by \ref{RPG:semi:p2} in Proposition~\ref{RPG:pr1}}, $\phi_X^{-1}$ is a semialgebraic mapping .

\end{proof}

\whsecrev{}{
Combining Theorem~\ref{RPG:th6} and Lemma~\ref{RPG:le18}, we see that the restriction of  a semialgebraic function onto the Stiefel manifold satisfies the Riemannian KL property.
}
\begin{theorem} \label{RPG:th5}
If a continuous function $F: \mathbb{R}^{n \times p} \rightarrow \mathbb{R}$ is semialgebraic, then the function by restricting $F$ onto $\St(p, n)$, $\tilde{F}: \St(p, n) \rightarrow \mathbb{R}$, has the Riemannian KL property at any point $X$ of $\St(p, n)$ with desingularising function in the form of $\varsigma(t) = \frac{C_X}{\theta_X} t^{\theta_X}$, where $\theta_X \in (0, 1]$ and $C_X > 0$.
\end{theorem}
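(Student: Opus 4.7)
The plan is to obtain Theorem~\ref{RPG:th5} as a direct corollary of Theorem~\ref{RPG:th6} and Lemma~\ref{RPG:le18}, since Theorem~\ref{RPG:th6} already reduces the question to producing, at every point $X \in \St(p,n)$, a chart whose inverse is a semialgebraic mapping into $\mathbb{R}^{n\times p}$, and Lemma~\ref{RPG:le18} constructs exactly such a chart for the Stiefel manifold.

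More precisely, I would proceed as follows. Fix an arbitrary $X \in \St(p,n)$. First, invoke Lemma~\ref{RPG:le18} to obtain the chart $(\mathcal{W}_X, \phi_X)$ of $\St(p,n)$ covering $X$, together with the fact that the inverse map $\phi_X^{-1}: \mathcal{E} \subset \mathbb{R}^{np - \frac{1}{2}p(p+1)} \to \St(p,n) \subset \mathbb{R}^{n\times p}$ is semialgebraic. Second, since $\St(p,n)$ is an embedded submanifold of $\mathbb{R}^{n\times p}$ of dimension $d = np - \frac{1}{2}p(p+1)$, the hypotheses of Theorem~\ref{RPG:th6} are met with $n$ replaced by $n\times p$ (viewed as $\mathbb{R}^{np}$) and with the chart $(\mathcal{W}_X,\phi_X)$ just produced. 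Third, apply Theorem~\ref{RPG:th6} to conclude that $\tilde F$ satisfies the Riemannian KL property at $X$ with a desingularising function of the form $\varsigma(t) = \frac{C_X}{\theta_X} t^{\theta_X}$ for some $\theta_X \in (0,1]$ and $C_X > 0$. Since $X \in \St(p,n)$ was arbitrary, this yields the claim for every point of the Stiefel manifold.

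There is essentially no obstacle in this proof beyond verifying that the ingredients match: the continuity and semialgebraicity of $F$ come directly from the hypothesis, the semialgebraicity of $\phi_X^{-1}$ is the nontrivial content of Lemma~\ref{RPG:le18}, and the exponent form of $\varsigma$ is inherited from property~\ref{RPG:semi:p3} of Proposition~\ref{RPG:pr1} as propagated through the proof of Theorem~\ref{RPG:th6}. The one thing worth noting in passing is that the constants $C_X$ and $\theta_X$ may in principle depend on $X$; this is consistent with the statement of Theorem~\ref{RPG:th5}, and it is precisely why Lemma~\ref{RPG:le15} is needed elsewhere in the paper in order to obtain a single uniform desingularising function on compact sets of accumulation points.
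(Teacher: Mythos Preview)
Your proposal is correct and follows exactly the paper's approach: the paper states Theorem~\ref{RPG:th5} with the single sentence ``Combining Theorem~\ref{RPG:th6} and Lemma~\ref{RPG:le18}, we see that the restriction of a semialgebraic function onto the Stiefel manifold satisfies the Riemannian KL property,'' and your argument simply spells out this combination in detail.
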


\begin{remark}\label{remark:KL}{
Note that the result in Theorem~\ref{RPG:th5} can be extended to product of Stiefel manifolds. In other words, if a continuous function $F: \mathbb{R}^{n_1 \times p_1} \times \mathbb{R}^{n_2 \times p_2} \times \ldots \times \mathbb{R}^{n_t \times p_t}$ is semialgebraic, then the function by restricting $F$ onto $ \St(p_1, n_1) \times \St(p_2, n_2) \times \ldots \times \St(p_t, n_t) $ has the Riemannian KL property at any point with a desingularising function in the form of $\varsigma(t) = \frac{C}{\theta} t^{\theta}$. The proofs follow the same spirit and therefore are omitted here.
}
\end{remark}

\whsecrev{}{
\begin{remark}\label{remark:localdef}
For embedded submanifolds of vector spaces (e.g., $\mathbb{R}$), they can be defined based on local defining functions, see for example ~\cite[Definition~3.6]{boumal2020intromanifolds}. If local defining functions are polynomials, by the same  arguments  used  in  the  proof  of  Lemma~\ref{RPG:le18}, one can show  that the  charts  constructed in~\cite[(8.1) and (8.2)]{boumal2020intromanifolds} that satisfy the conditions of Theorem~\ref{RPG:th6}. In this case, the restriction  of a  semialgebraic function onto the submanifolds satisfies the Riemannian KL property.
\end{remark}
}

\subsection{Solving the Riemannian Proximal Mapping \kwcomm{title may be improved} \whcomm{[Any suggestions? Why do you think this title is not sufficient?]}{}} \label{RPG:sect:subproblem}

As we have mentioned already, in~\cite{CMSZ2019} Chen et. al propose a Riemannian proximal gradient method based on a different proximal mapping,
\begin{equation} \label{RPG:subprobChen}
\eta_{x_k}^* = \arg\min_{\eta \in \T_{x_k} \mathcal{M}} \inner[\E]{\grad f(x_k)}{\eta} + \frac{\tilde{L}}{2} \|\eta\|_{\E}^2 + g(x_k + \eta),
\end{equation}
where the manifold $\mathcal{M}$ is assumed to be an embedded submanifold of a Euclidean space so that the addition $x_k + \eta$ is meaningful \kwcomm{may be move this to Section~3.1}\whcomm{[As the comment in Section 3, I prefer to keep the current structure.]}{}. If $g$ is a convex function in a Euclidean space, then~\eqref{RPG:subprobChen} is a constrained convex programming problem. In particular, when  $\mathcal{M}$ is the Stiefel manifold, a semismooth Newton method can be used to solve~\eqref{RPG:subprobChen} efficiently~\cite{CMSZ2019}.  
In this section we present an algorithm for solving \eqref{RPG:subproblem2}, which is an iterative descent method starting from $0_x$. 
For notational convenience, we first restate~\eqref{RPG:subproblem2} as \kwcomm{not appropriate expression for local min} \whcomm{[Done]}{}
\begin{align} \label{RPG:subprobRestate}
\hbox{find a \whfirrev{}{stationary point} } \eta_{x}^* \hbox{ of } \ell_{x}(\eta)\hbox{ on }\T_x \mathcal{M} \hbox{ such that }  \ell_{x}(0) \geq \ell_{x}(\eta_{x}^*),
\end{align}
where $\ell_x(\eta) = \inner[x]{\grad f(x)}{\eta} + \frac{\tilde{L}}{2} \|\eta\|_x^2 + g(R_{x}(\eta))$.  
The following assumption will be used in the derivation of the algorithm.
\begin{assumption} \label{RPG:as15}
	(i) The manifold $\mathcal{M}$ is an embedded submanifold of $\mathbb{R}^n$ or is a quotient manifold whose total space is an embedded submanifold of $\mathbb{R}^n$. 
	(ii) The function $g$ is Lipschitz continuous with constant $L_g$ and convex in the classical setting.
	(iii) The function $g$ is bounded from below.
\end{assumption}

Suppose $\eta_k$ is the current estimate of $\eta_x^*$. Our goal is to find a descent direction. Towards this end, we note that
\begin{align*}
\ell_x(\eta_k + \tilde{\xi}_k) =& \inner[x]{\grad f(x)}{\eta_k + \tilde{\xi}_k} + \frac{\tilde{L}}{2} \|\eta_k + \tilde{\xi}_k\|_x^2 + g(R_{x}(\eta_k + \tilde{\xi}_k)) \\
=& \inner[x]{\grad f(x)}{\eta_k} + \frac{\tilde{L}}{2} \|\eta_k\|_x^2 + \inner[x]{\grad f(x) + \tilde{L} \eta_k}{\tilde\xi_k} +\frac{\tilde{L}}{2} \|\tilde\xi_k \|_x^2 +  g(R_{x}(\eta_k + \tilde\xi_k))
\end{align*}
for any $\tilde\xi_k \in \T_x \mathcal{M}$.
Let $y_k=R_x(\eta_k)$ and $\xi_k=\mathcal{T}_{R_{\eta_k}} \tilde{\xi}_k$. Since $R$ is \whfirrev{}{smooth} by definition, there holds $R_x(\eta_k + \tilde\xi_k) = y_k + \xi_k + O(\|\xi_k\|_x^2)$, where $y = x + O(z)$ means \whfirrev{}{$\limsup_{z \rightarrow 0} \|y - x\| / \|z\| < \infty$.} It follows that
\begin{align*}
\ell_x(\eta_k + \tilde{\xi}_k) =& \inner[x]{\grad f(x)}{\eta_k} + \frac{\tilde{L}}{2} \|\eta_k\|_x^2 \nonumber \\
&+ \inner[x]{\grad f(x) + \tilde{L} \eta_k}{\mathcal{T}_{R_{\eta_k}}^{-1} \xi_k} + \frac{\tilde{L}}{2} \| \mathcal{T}_{R_{\eta_k}}^{-1} \xi_k \|_x^2 + g(y_k + \xi_k + O(\|\xi_k\|_x^2)) \nonumber \\
=& \inner[x]{\grad f(x)}{\eta_k} + \frac{\tilde{L}}{2} \|\eta_k\|_x^2 \nonumber \\
&+ \inner[x]{\grad f(x) + \tilde{L} \eta_k}{\mathcal{T}_{R_{\eta_k}}^{-1} \xi_k} + \frac{\tilde{L}}{2} \| \mathcal{T}_{R_{\eta_k}}^{-1} \xi_k \|_x^2 + g(y_k + \xi_k) + O(\|\xi_k\|_x^2). \nonumber \\
=& \inner[x]{\grad f(x)}{\eta_k} + \frac{\tilde{L}}{2} \|\eta_k\|_x^2 \nonumber \\
&+ \inner[x]{\grad f(x) + \tilde{L} \eta_k}{\mathcal{T}_{R_{\eta_k}}^{-1} \xi_k} + \frac{\tilde{L}}{2} \| \xi_k \|_{\E}^2 + g(y_k + \xi_k) + O(\|\xi_k\|_x^2),
\end{align*}
where the second equation is from the Lipschitz continuity of $g$ and the last equation is from the equivalence between any two norms in a finite dimensional space \whcomm{}{and that both $\| \mathcal{T}_{R_{\eta_k}}^{-1} \xi_k \|_x^2$ and $\| \xi_k \|_{\E}^2$ are second order terms}.\kwcomm{how? more details} \whcomm{[Done]}{}Letting $\tilde{\ell}_{y_k}(\xi_k)$ denote
\begin{equation}\label{eq:ke00}
\inner[x]{\grad f(x) + \tilde{L} \eta_k}{\mathcal{T}_{R_{\eta_k}}^{-1} \xi_k} + \frac{\tilde{L}}{2} \| \xi_k \|_{\E}^2 + g(y_k + \xi_k),
\end{equation}
we may interpret  it as a  simple local model   \kwcomm{seems not quite right} of $\ell_x(\eta_k + \mathcal{T}_{R_{\eta_k}}^{-1} \xi_k )$. Therefore, in order to find a new estimate from $\eta_k$, we can first compute a search direction by minimizing \eqref{eq:ke00} on $\T_{y_k}\mathcal{M}$, denoted $\xi_k^*$, and then update $\eta_k$ along the direction $\mathcal{T}_{R_{\eta_k}}^{-1} \xi_k^*$; see Algorithm~\ref{RPG:a3}.

\begin{algorithm}[h]
\caption{Solving the Riemannian Proximal Mapping}
\label{RPG:a3}
\begin{algorithmic}[1]
\Require Initial iterate $\eta_0 \in \T_x \mathcal{M}$; a small positive constant $\sigma$ for line search;
\For {$k = 0, \ldots$}
\State $y_k = R_{x}(\eta_k)$;
\State \label{RPG:a3:st1} Compute $\xi_k^*$ by solving
\begin{equation} \label{RPG:subdir}
\xi_k^* = \arg\min_{\xi \in \T_{y_k} \mathcal{M}} \inner[y_k]{\mathcal{T}_{R_{\eta_k}}^{-\sharp} (\grad f(x) + \tilde{L} \eta_k)}{\xi} + \frac{\tilde{L}}{2} \|\xi\|_{\E}^2 + g(y_k + \xi);
\end{equation}
\State $\alpha = 1$;
\While {$\ell_x(\eta_k + \alpha \mathcal{T}_{R_{\eta_k}}^{-1} \xi_k^*) \geq \ell_x(\eta_k) - \sigma \alpha \|\xi_k^*\|_x^2 $} \label{RPG:a3:st2}
\State $\alpha = \frac{1}{2} \alpha$;
\EndWhile
\State $\eta_{k+1} = \eta_k + \alpha \mathcal{T}_{R_{\eta_k}}^{-1} \xi_k^*$;
\EndFor
\end{algorithmic}
\end{algorithm}

Let $y=R_x(\eta)$ and suppose $R_x^{-1}(y)$ is well defined. Then~\eqref{RPG:subprobRestate} can be rewritten as
\begin{equation} \label{RPG:e44}
\arg\min_{y \in \mathcal{M}} \inner[x]{\grad f(x)}{R_x^{-1}(y)} + \frac{\tilde{L}}{2} \|R_x^{-1}(y)\|_x^2 + g(y).
\end{equation}
Interestingly, it is not hard to see that Algorithm~\ref{RPG:a3} can be interpreted as the application of the Riemannian proximal gradient method in~\cite{CMSZ2019} to the cost function in~\eqref{RPG:e44} under a proper choice of the retraction. \kwcomm{more details for non-experts?}\whcomm{[Done.]}{Specifically, the gradient of $\inner[x]{\grad f(x)}{R_x^{-1}(y)} + \frac{\tilde{L}}{2} \|R_x^{-1}(y)\|_x^2$ with respect to the Euclidean metric is $M_{x} \mathcal{T}_{R_{\eta_k}}^{-\sharp} (\grad f(x) + \tilde{L} \eta_k)$, where $M_x$ is the matrix expression of the Riemannian metric at $x$, i.e., $\inner[x]{\eta}{\xi} = \eta^T M_x \xi$. Thus, if we choose the retraction to be $R_y(\eta_y) = R_x(\xi_x + \mathcal{T}_{\xi_x}^{-1} \eta_y)$, where $\xi_x$ satisfies $R_x(\xi_x) = y$, one can easily see that Algorithm~\ref{RPG:a3} is indeed an application of~\cite[Algorithm~1]{CMSZ2019}.
}

Algorithm~\ref{RPG:a3} provides a general method for solving the Riemannian proximal mapping~\eqref{RPG:subdir} under Assumption~\ref{RPG:as15}. However, it is by no means the only method to do so. For example, another efficient algorithm can be developed when $\mathcal{M}$ is an oblique manifold (i.e., a Cartesian  product of unit spheres), see  Section~\ref{RPG:sect:Oblique} for more details.

\section{\whfirrev{}{Accelerating the Riemannian Proximal Gradient Method}} \label{sec:ARPG}


\whfirrev{}{In this section, we attempt to develop an acceleration of Algorithm~\ref{RPG:a1} based on the idea of FISTA.}
The \whfirrev{}{vanilla} Riemannian generalization of the FISTA method~\eqref{RPG:EAPG} is presented in Algorithm~\ref{RPG:a2}, where
 the Riemannian proximal mapping and the update scheme are the same as those in Algorithm~\ref{RPG:a1}. Similarly to the FISTA method in a Euclidean space, an auxiliary sequence $\{y_k\}$ is generated, see~\eqref{RPG:e21} in Algorithm~\ref{RPG:a2}. \whcomm{}{In the Euclidean setting, the exponential mapping and its inverse are given by $R_x(\eta_x) = x + \eta_x$ and $R_x^{-1}(y) = y - x$. The definition of $y_{k+1}$ in~\eqref{RPG:e21} then becomes
$$
y_{k+1} = y_k + \frac{t_{k+1} + t_k - 1}{t_{k+1}} (x_{k+1} - y_k) - \frac{t_k-1}{t_{k+1}} (x_k - y_k) = x_{k+1} + \frac{t_k - 1}{t_{k+1}} (x_{k+1} - x_k),
$$
which coincides with the definition in~\eqref{RPG:EAPG}.
}
\whfirrev{}{As mentioned in the introduction, the accelerated $O(1/k^2)$ convergence rate can be established for FISTA in the Euclidean setting. 
Establishing the theoretical $O(1/k^2)$ convergence rate for the V-APRG method is very challenging since we essentially deal with a nonconvex problem and a rigorous analysis of V-ARPG is beyond the scope of this paper. Indeed, a simple numerical experiment  shows that the V-APRG method may diverge when the RPG method converges. Here, we test an optimization problem for the sparse principal component analysis in~\cite{GHT2015}:
$$\min_{X \in \mathrm{OB}(p, n)} \|X^T A^T A X - D^2\|_{\E}^2 + \lambda \|X\|_1,$$ where $\mathrm{OB}(p, n)$ denotes the oblique manifold, i.e., the product manifold of $p$ number of $n-1$ dimensional spheres. The data matrix $A$ is generated randomly. See more details about the problem and experiment setup in Section~\ref{RPG:sect:NumExp}. Figure~\ref{RPG:RPGvsVARPG} reports the results of  two typical random instances. The left plot shows an instance when both RPG and V-APRG converge. In this case, it is evident that V-ARPG has the desirable acceleration behavior, as the Euclidean FISTA. On the other hand, RPG always converges in our tests while V-ARPG may not converge as shown in the right plot. 
\begin{figure}[ht!]
\centering
\includegraphics[width=0.8\textwidth]{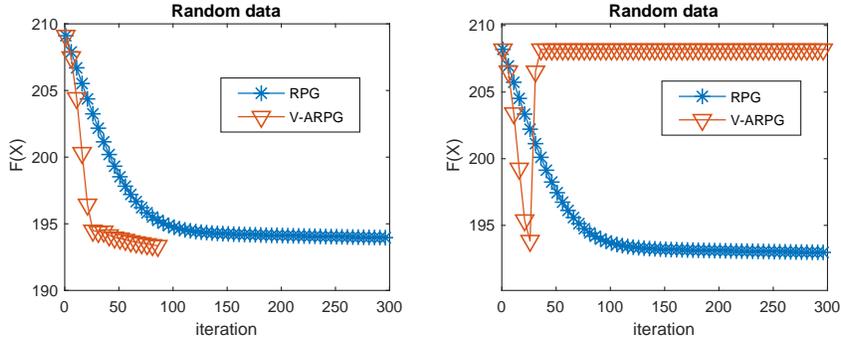}
\caption{
\whfirrev{}{
Comparisons of RPG and V-ARPG. The constant $\tilde{L} = 1.35\|A\|_F^2$. RPG terminates when the number of iterations reaches 1000. V-ARPG terminates when its function value is smaller than that obtained by RPG. Note that in the right plot, the iterates of V-ARPG actually diverge. The function value of V-ARPG  in the right plot remains constant as we plot the minimum of the function values with a prescribed upper bound. 
}
}
\label{RPG:RPGvsVARPG}
\end{figure}

}

\begin{algorithm}[h]
\caption{Vanilla Accelerated Riemannian Proximal Gradient Method (V-ARPG)}
\label{RPG:a2}
\begin{algorithmic}[1]
\Require A constant $\tilde{L} > L$; an initial iterate $x_0$; 
\State $t_0 = 1$, $y_0 = x_0$;
\For {$k = 0, \ldots$}
\State Find $\eta_{y_k}^*{\in \T_{y_k}\mathcal{M}}$ such that
\begin{align*}
\eta_{y_k}^* \hbox{ is a \whfirrev{}{stationary point} of } \ell_{y_k}(\eta) {\hbox{ on }\T_{y_k}\mathcal{M}}\hbox{ and }  \ell_{y_k}(0) \geq \ell_{y_k}(\eta_{y_k}^*);
\end{align*}
\State $x_{k+1} = R_{y_k}(\eta_{y_k}^*)$;
\State Let $t_{k+1} = \frac{1 + \sqrt{1 + 4 t_k^2}}{2}$; 
\State Compute $y_{k+1} \in \mathcal{M}$ by
\begin{equation} \label{RPG:e21}
y_{k+1} = R_{y_k}\left(\frac{t_{k+1} + t_k - 1}{t_{k+1}} \eta_{y_k}^* - \frac{t_k-1}{t_{k+1}} R_{y_k}^{-1}(x_k) \right);
\end{equation}
\EndFor
\end{algorithmic}
\end{algorithm}

\whfirrev{}{ The above observation motivates  us to develop a practical accelerated Riemannian proximal gradient method. To develop an empirically accelerated algorithm which enjoys the basic global convergence rate analysis, we adopt a restarting technique 
by combining Algorithms~\ref{RPG:a1} and~\ref{RPG:a2} together, which gives the practical accelerated Riemannian proximal gradient method, see Algorithm~\ref{RPG:alg:ARPG}.} Specifically, a safeguard is introduced in every $N$ iterations to check whether there is a sufficiently large decrease in the cost function, in contrast to the result given by one iteration of the proximal gradient method from the current reference point. If the function value decrement is sufficient the iteration continues, otherwise the algorithm will be restarted; see Step~\ref{RPG:ARPG:st1} to Step~\ref{RPG:ARPG:st2} of Algorithm~\ref{RPG:alg:ARPG}.



\begin{algorithm}[ht!]
\caption{Practical Accelerated Riemannian Proximal Gradient Method (P-ARPG)}
\label{RPG:alg:ARPG}
\begin{algorithmic}[1]
\Require Initial iterate $x_0$; positive integers $N, N_{\min}$, and $N_{\max}$ for safeguard; an upper bound $\tilde{L}$ of the Lipschitz constant; a lower bound $\mathfrak{L}$ of the Lipschitz constant; Enlarging parameter $\tau \in (1, \infty)$ for updating $\mathfrak{L}$; line search parameter $\sigma \in (0, 1)$, shrinking parameter in line search $\nu \in (0, 1)$; maximum number of iterations in line search $N_{\mathrm{ls}} > 0$;
\State $t_0 = 1$, $y_{0} = x_0$, $z_0 = x_0$, $j_1 = 0$, and $j_2 = N$;
\For {$k = 0, \ldots$}
\If {$k == j_2$} \Comment{Invoke safeguard} \label{RPG:ARPG:st1}
\State  Invoke Algorithm~\ref{alg:Safeguard}:
$
[z_{j_2}, x_k, y_k, t_k, \mathfrak{L}, N] = Algo\ref{alg:Safeguard}(z_{j_1}, x_k, y_k, t_k, F(x_k), \mathfrak{L}, N);
$
\State Set $j_1 = j_2$ and $j_2 = j_2 + N$;
\EndIf \label{RPG:ARPG:st2}
\State Find $\eta_{y_k}^*{\in\T_{y_k}\mathcal{M}}$ such that
\begin{align*}
\eta_{y_k}^* \hbox{ is a \whfirrev{}{stationary point} of } \ell_{y_k}(\eta){\hbox{ on }\T_{y_k}\mathcal{M}} \hbox{ and }  \ell_{y_k}(0) \geq \ell_{y_k}(\eta_{y_k}^*);
\end{align*}

\State $x_{k+1} = R_{y_k}(\eta_{y_k}^*)$;
\State Let $t_{k+1} = \frac{1 + \sqrt{1 + 4 t_k^2}}{2}$; 
\State Compute $y_{k+1} \in \mathcal{M}$ by
\begin{equation*} 
y_{k+1} = R_{y_k}\left(\frac{t_{k+1} + t_k - 1}{t_{k+1}} \eta_{y_k}^* - \frac{t_k-1}{t_{k+1}} R_{y_k}^{-1}(x_k) \right);
\end{equation*}
\EndFor
\end{algorithmic}
\end{algorithm}

Since the constant $L$ for $f$ to be $L$-retraction-smooth is usually not known, an update strategy is also introduced to find an appropriate estimation of $L$.  The idea is to enlarge the estimation if the line search fails (Steps~\ref{alg:Safeguard:st6w} to~\ref{alg:Safeguard:st8w} in Algorithm~\ref{alg:Safeguard}) or the safeguard takes effect often (Steps~\ref{alg:Safeguard:st5} to~\ref{alg:Safeguard:st6} in Algorithm~\ref{alg:Safeguard}). In Algorithm~\ref{RPG:alg:ARPG}, solving the Riemannian proximal mapping dominates its computational cost, and invoking the safeguard requires at least one more Riemannian proximal mapping computation. Thus, in order to reduce the computational cost of the algorithm, an adaptive strategy is adopted to determine the frequency of invoking the safeguard. If the safeguard takes effect, then it will take effect more often. Otherwise, it will take effect less often, see Steps~\ref{alg:Safeguard:st7} and~\ref{alg:Safeguard:st8} in Algorithm~\ref{alg:Safeguard}.


\whfirrev{}{
Figure~\ref{RPG:RPGvsVARPGvsPARPG} compares P-ARPG with RPG and V-ARPG using the same problem as  Figure~\ref{RPG:RPGvsVARPG}. 
It can be observed that the practical APRG method converges in both of the random instances and  at the same time it can still achieve significant acceleration over the RPG method. 
\begin{figure}[ht!]
\centering
\includegraphics[width=0.8\textwidth]{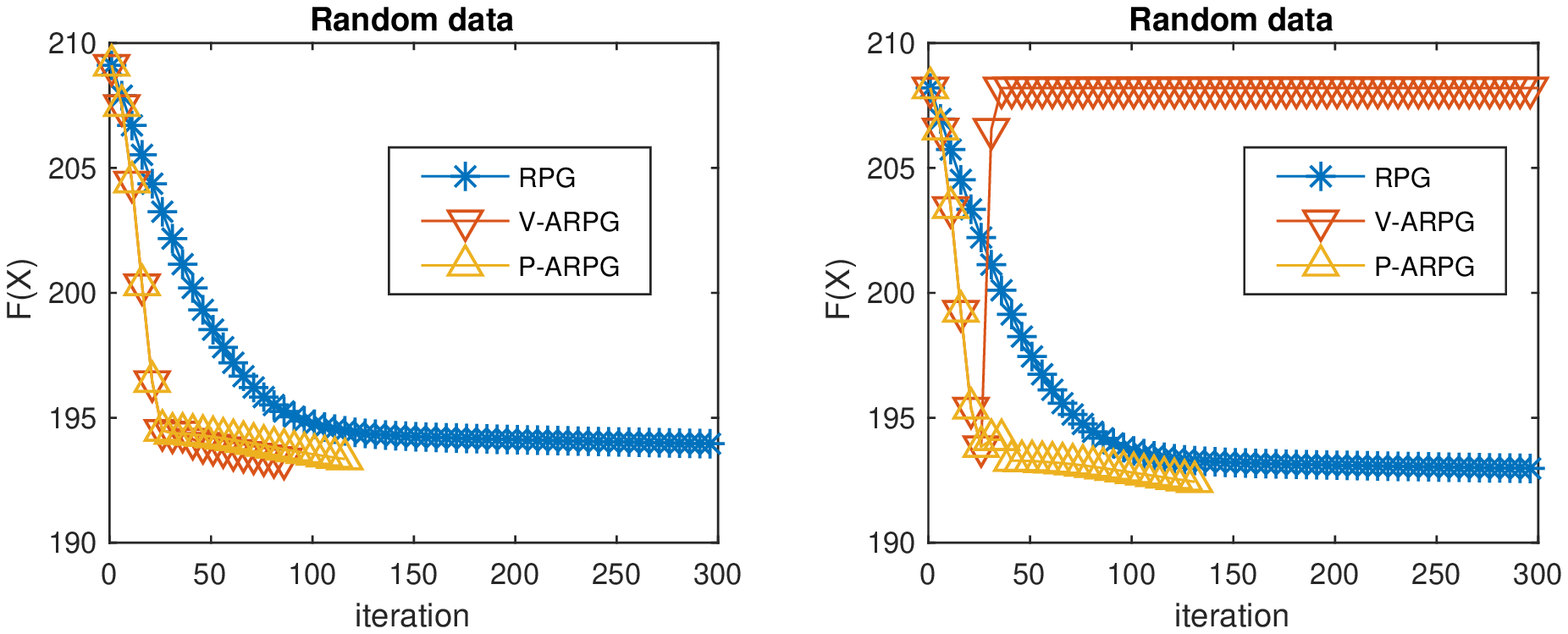}
\caption{
\whfirrev{}{
Comparisons of RPG and V-ARPG. The constant $\tilde{L} = \mathfrak{L} = 1.35\|A\|_F^2$. RPG terminates when the number of iterations reaches 1000. V-ARPG and P-ARPG terminate when their function values are smaller than that obtained by RPG. Left: the numbers of iterations of V-ARPG and P-ARPG are, respectively, 86 and 115. The number of restarts in P-ARPG is 1. The computations times of RPG, V-ARPG and P-ARPG are, respectively, 0.263, 0.042 and 0.056 second. Right: the number of iterations of P-ARPG is 133. The number of restarts in P-ARPG is 2. 
}
}
\label{RPG:RPGvsVARPGvsPARPG}
\end{figure}

}

\begin{algorithm}[ht!]
\caption{Safeguard for Algorithm~\ref{RPG:alg:ARPG}}
\label{alg:Safeguard}
\begin{algorithmic}[1]
\Require $(z_{j_1}, x_k, y_k, t_k, F(x_k), \mathfrak{L}, N)$;
\Ensure $[z_{j_2}, x_k, y_k, t_k, \mathfrak{L}, N]$;
%
\State \label{alg:Safeguard:st1} Find $\eta_{z_{j_1}}^*$ such that
\begin{align*}
\eta_{z_{j_1}}^* \hbox{ is a stationary point of } \ell_{z_{j_1}}(\eta) \hbox{ and }  \ell_{z_{j_1}}(0) \geq \ell_{z_{j_1}}(\eta_{z_{j_1}}^*);  
\end{align*}
\State Set $\alpha = 1$ and $i_{\mathrm{ls}} = 0$;
\While {$F(R_{z_{j_1}}(\alpha\eta_{z_{j_1}})) > F(z_{j_1}) - \sigma \alpha \|\eta_{z_{j_1}}\|^2$ and $i_{\mathrm{ls}} < N_{\mathrm{ls}}$} \Comment{Line search}  \label{alg:Safeguard:st2}
\State $\alpha = \nu \alpha$, $i_{\mathrm{ls}} = i_{\mathrm{ls}} + 1$;\Comment{If \whcomm{}{$\mathfrak{L} > L + 2 \sigma$}, then no backtracking is performed by Lemma~\ref{RPG:le3}. \kwcomm{1) not quite true; 2) bound for $\sigma\alpha$ when terminating for stationary point analysis}}
\EndWhile
\If {$i_{\mathrm{ls}} == N_{\mathrm{ls}}$} \Comment{Line search fails}\label{alg:Safeguard:st6w}
\State $\mathfrak{L} = \tau \mathfrak{L}$ and goto Step~\ref{alg:Safeguard:st1}; \Comment{The estimation $\mathfrak{L}$ is too small; }
\EndIf\label{alg:Safeguard:st8w}
\If {$F(R_{z_k}(\alpha \eta_{z_k})) < F(x_k)$} \Comment{Safeguard takes effect} \label{alg:Safeguard:st3}
\If {$N \neq N_{\max}$} \label{alg:Safeguard:st5}
\State $\mathfrak{L} = \tau \mathfrak{L}$; \Comment{$\mathfrak{L}$ is not sufficiently large;\kwcomm{not clear to me; cannot shrink?} \whcomm{[right. As we discussed on wechat, I found this idea already works well.]}{}}
\EndIf \label{alg:Safeguard:st6}
\State $x_k = R_{z_k}(\eta_{z_k})$, $y_k = x_k$, and $t_k=1$;
\State $N = \max(N - 1, N_{\min})$; \label{alg:Safeguard:st7} \Comment{Check safeguard more often;}
\Else
\State $x_k$, $y_k$ and $t_k$ keep unchanged;
\State $N = \min(N + 1, N_{\max})$; \label{alg:Safeguard:st8} \Comment{Check safeguard less often;}
\EndIf \label{alg:Safeguard:st4}
\State $z_{j_2} = x_k$; \Comment{Update the compared iterate;}
\end{algorithmic}
\end{algorithm}

\section{Numerical Experiments}\label{RPG:sect:NumExp}

In this section we conduct numerical experiments on sparse principal component analysis (PCA) to demonstrate the performance of the proposed Riemannian proximal gradient methods. PCA is an important data processing technique which aims for linear combinations of  variables that can capture the maximal variance. In order to achieve the maximal variance, PCA tends to use a linear combination of all the variables which typically yields a dense solution. Alternatively, sparse PCA attempts to achieve a better trade-off between the data variance and  solution sparsity by incorporating the sparse structure into the mathematical models.

We consider two models for sparse PCA. The first one, aiming to find weakly correlated low dimensional representations~\cite{GHT2015}, considers the optimization problem on the oblique manifold
\begin{align} \label{RPG:spcaob}
&\min_{X \in \mathrm{OB}(p, n)} \|X^T A^T A X - D^2\|_{\E}^2 + \lambda \|X\|_1,
\end{align}
where  we recall that  $\mathrm{OB}(p, n) \whfirrev{}{= \{ X \in \mathbb{R}^{n \times p} \mid x_i^T x_i = 1, i = 1, \ldots, p, \hbox{ and $x_i$ is the $i$-th column of $X$} \}}$ denotes the oblique manifold,  $A \in \mathbb{R}^{m \times n}$ is the data matrix, $D$ is the diagonal matrix whose diagonal entries are the dominant singular values of $A$.
The second one is a penalized version of the ScoTLASS model introduced in~\cite{JoTrUd2003a} and it has been used in~\cite{CMSZ2019,HuaWei2019} to examine the performance of the proposed algorithms. The optimization problem is 
\begin{align} \label{RPG:spcast}
&\min_{X \in \mathrm{St}(p, n)} - \trace(X^T A^T A X) + \lambda \|X\|_1,
\end{align}
where $\St(p,n)$ denotes the Stiefel manifold, defined as
\begin{align*}
\St(p,n) = \{X\in\mathbb{R}^{n\times p}~|~X^TX=I_p\}.
\end{align*}


\subsection{\whfirrev{}{Convergence of RPG for Sparse PCA}} \label{sec:verify}

\whfirrev{}{
Here we verify that the above two objective functions satisfy the conditions for the global convergence (Theorem~\ref{RPG:globaltheo}), as well as the Riemannian KL property that guarantees the convergence to a single stationary point with a local convergence rate (Theorems~\ref{RPG:single} and \ref{RPG:LocalRateKL}).
Let $f_1 = \|X^T A^T A X - D^2\|_{\E}^2$, $f_2 = - \trace(X^T A^T A X)$, and $g = \lambda \|X\|_1$. Then the objective function~\eqref{RPG:spcaob} is $F_1(X) = f_1(X) + g(X)$ and the objective function~\eqref{RPG:spcast} is $F_2(X) = f_2(X) + g(X)$.

Since $F_1(X) \geq 0$ for all $X \in \mathrm{OB}(p, n)$, $F_2(X) \geq - p \sigma_{\max}^2$ for all $X \in \St(p, n)$ and the oblique manifold and the Stiefel manifold are compact, Assumption~\ref{RPG:as10} holds, where $\sigma_{\max}$ denotes the largest singular value of $A$. Assumption~\ref{RPG:as3} also holds by verifying the assumptions in~\cite[Lemma~2.7]{BAC2018}: (i) the Stiefel manifold and the oblique manifold are compact, (ii) the \whsecrev{}{retractions} 
(see Sections~\ref{RPG:sect:Oblique} and~\ref{RPG:sect:Stiefel}) are globally defined, (iii) $F_1$ and $F_2$ are well-defined in $\mathbb{R}^{n \times p}$, and (iv) $F_1$ and $F_2$ are, respectively, $L$-smooth in the convex hull of the oblique manifold and the Stiefel manifold. Therefore, any accumulation point of the sequence generated by Algorithm~\ref{RPG:a1} is a stationary point by Theorem~\ref{RPG:globaltheo}.

It has been shown in~\cite[Proposition~7.4.5 and Corollary~7.4.6]{AMS2008} that any smooth function on a compact manifold is Lipschitz continuously differentiable. Therefore, the $f_1$ and $f_2$ are, respectively, Lipschitz continuously differentiable on the oblique manifold and the Stiefel manifold. The function $g$ is obviously continuous. Therefore, Assumption~\ref{RPG:as19} holds. Since $f_1$ and $f_2$ are polynomial functions, they are semialgebraic. The function $g$ is also semialgebraic by~\cite[Example~4]{BST2014}. Thus, $F_1$ and $F_2$ are semialgebraic. Since the oblique manifold can be viewed as a product manifold of $p$ number of $\St(1, n)$, it follows from Theorem~\ref{RPG:th5} that the function by restricting $F_1$ ($F_2$) to the oblique (Stiefel) manifold satisfies the Riemannian KL property at any point with the desingularising function in the form of $\varsigma(t) = \frac{C}{\theta} t^{\theta}$ for $\theta \in (0, 1]$ and $C > 0$. 
Therefore, the convergence rate analysis given in Theorem~\ref{RPG:LocalRateKL} holds. Note that the local convergence rate is faster than $O(1/k)$ for any legitimate $\theta$.

}

\subsection{Computations Related to Oblique Manifold} \label{RPG:sect:Oblique}
Let $\mathcal{M}$ be a submanifold of a Euclidean space and $f$ be a smooth function defined on $\mathcal{M}$. Then the Riemannian gradient of $f$ at $X$ is simply the projection of $\nabla f(X)$ onto the tangent space $\T_X\mathcal{M}$. Note that $\mathrm{OB}(p, n)$ is a submanifold of $\mathbb{R}^{n\times p}$ and the tangent space of $\mathrm{OB}(p, n)$ at a matrix $X\in \mathrm{OB}(p, n)$ is given by
\begin{align*}
\T_X\mathrm{OB}(p, n) = \{\eta_X~|~\diag(X^T\eta_X) = 0\},
\end{align*}
Thus, under
the Euclidean metric, i.e., $\inner[X]{\eta_X}{\xi_X} = \trace(\eta_X^T \xi_X)$, the Riemannian gradient of the smooth term $f$ in~\eqref{RPG:spcaob} is
 $$\grad f(X) = \nabla f(X) - X \diag(X^T \nabla f(X)),$$ where $\nabla f(X) = 4 A^T A X (X^T A^T A X - D^2)$ is the Euclidean gradient of $f$.

In this section we choose the exponential  mapping as the retraction. Since $\mathrm{OB}(p, n)$ is a product manifold of unit spheres, the exponential mapping from $\T_X\mathrm{OB}(p, n)$ to $\mathrm{OB}(p, n)$ is given by applying the exponential mapping on the unit sphere $\mathbb{S}^{n - 1}$, \whfirrev{}{see e.g.,~\cite{AMS2008}}
\begin{align} \label{eq:ke001}
\Exp_x(\eta_x) = x \cos(\|\eta_x\|_2) + \eta_x \sin(\|\eta_x\|_2) / \|\eta_x\|_2, \quad x \in \mathbb{S}^{n - 1}, \quad \eta_x \in \T_x \mathbb{S}^{n - 1},
\end{align}
to each column of a tangent vector separately. That is, with a slight abuse of notation, we have
\begin{align} \label{RPG:obexp}
\Exp_X(\eta_X) =[\Exp_{X_1}(\eta_X)_1,\cdots,\Exp_{X_p}(\eta_X)_p],
\end{align}
where $(M)_i$ denotes the $i$-th column of  $M$. Likewise, the inverse exponential mapping can also be computed by applying the inverse exponential mapping on the unit sphere $\mathbb{S}^{n - 1}$, \whfirrev{}{see e.g.,~\cite{SriKla2016}}
\begin{align}
\mathrm{Log}_x(y) = \Exp_x^{-1}(y) = \frac{\cos^{-1}(x^T y)}{\sqrt{1 - (x^T y)^2}} (I - x x^T) y, \quad x, y\in \mathbb{S}^{n - 1} \label{eq:inv_exp_ob}
\end{align}
in a column-wise manner, i.e.,
\begin{align*}
\Exp^{-1}_X(Y) = [\mathrm{Log}_{X_1}Y_1,\cdots, \mathrm{Log}_{X_p}Y_p].
\end{align*}

When using the Riemannian proximal gradient method to solve \eqref{RPG:spcaob}, the Riemannian proximal mapping has the form
\begin{equation} \label{RPG:e60}
	\min_{\eta_X \in \T_X\mathrm{OB}(p, n)} \frac{\tilde{L}}{2} \left\|\eta_X + \frac{1}{\tilde{L}} \grad f(X)\right\|_{\E}^2 + \lambda \|\Exp_X(\eta_X)\|_1.
\end{equation}
Due to the separability of $\Exp_X(\eta_X)$, one can easily see that the solution to \eqref{RPG:e60} can be computed with respect to each column of $\eta_X$ separately.
Therefore, without loss of generality, we consider~\eqref{RPG:e60} with $p = 1$. After making the following substitutions $\tilde{\lambda} = \lambda / \tilde{L}$, $y = \Exp_x(\eta)$, and $\xi_x = \frac{1}{\tilde{L}} \grad f(x)$, ~\eqref{RPG:e60} can be rewritten as
\begin{equation} \label{RPG:subprobSphere}
\min_{y\in\mathbb{S}^{n-1}} u(y),\hbox{ where } u(y)= \underbrace{\frac{1}{2 \tilde\lambda} \left\|\mathrm{Log}_x(y) + \xi_x \right\|_2^2}_{h(y)} + \|y\|_1.
\end{equation}

We will present a conditional gradient method to compute the solution of \eqref{RPG:subprobSphere}. 
Letting $y_k$ be the current estimate of the minimizer of $u(y)$ over the unit sphere, a  new estimate $y_{k+1}$ is then computed by solving the following optimization problem
\begin{align}\label{eq:ke002}
\min_{y\in\mathbb{S}^{n-1}} h(y_k)+\nabla h(y_k)^T(y-y_k)+\|y\|_1\Leftrightarrow \min_{y\in\mathbb{S}^{n-1}} \nabla h(y_k)^Ty+\|y\|_1.
\end{align}
In other words,  we approximate $h(y)$ by its first order Taylor expansion around $y_k$ in each iteration.

It  remains to see how to solve \eqref{eq:ke002}. Actually, it has a closed-form solution. To see this, note that \eqref{eq:ke002} is  further equivalent to
\begin{align}\label{eq:ke003}
\min_{y\in\mathbb{S}^{n-1}}  \frac{1}{2}\|y+\nabla h(y_k)\|_2^2+\|y\|_1
\end{align}
since $\|y\|_2=1$ for all $y\in\mathbb{S}^{n-1}$.
By  Lemma~\ref{RPG:le14} in the appendix we know that the solution to  \eqref{eq:ke003} is given by
 \begin{equation} \label{RPG:e61}
{y}_* =
\left\{
\begin{array}{ll}
\frac{z}{\|z\|_{2}},  & \hbox{ if $\|z\|_{2} \neq 0$; } \\
\sign(\tilde{x}_{i_{\max}}) e_{i_{\max}} & \hbox{ otherwise, }
\end{array}
\right.
\end{equation}
where $i_{\max}$ is the index of the largest magnitude entry of $\nabla h(y_k)$, $e_i$ denotes the $i$-th column in the canonical basis of $\mathbb{R}^n$, and $z$ is defined by
$$
z_i =
\left\{
\begin{array}{ll}
0 & \hbox{ if $|(\nabla h(y_k))_i| \leq 1$; } \\
-(\nabla h(y_k))_i - 1 & \hbox{ if $-(\nabla h(y_k))_i > 1$; } \\
-(\nabla h(y_k))_i + 1 & \hbox{ if $-(\nabla h(y_k))_i < - 1$.} \\
\end{array}
\right.
$$
Note that the gradient of $h(y)$ is
\begin{equation*} 
\nabla h(y) = \frac{1}{\tilde\lambda} \underbrace{\left( - \frac{\cos^{-1}(x^T y)}{\sqrt{1 - (x^T y)^2}} - \frac{\xi_x^T y}{1 - (x^T y)^2} + \frac{\cos^{-1}(x^T y) \xi_x^T y x^T y}{(1 - (x^T y)^2)^{\frac{3}{2}}} \right)}_{s(y)} x + \frac{1}{\tilde\lambda} \underbrace{\frac{\cos^{-1}(x^T y)}{\sqrt{1 - (x^T y)^2}}}_{t(y)} \xi_x.
\end{equation*}
Putting it all together, we obtain the algorithm for solving~\eqref{RPG:subprobSphere}, see Algorithm~\ref{alg:RPMOB}. Suppose the sequence $\{y_k\}$ generated by Algorithm~\ref{alg:RPMOB} converges to a point $y_*$. Then by the first order optimality condition of \eqref{eq:ke003}, it is easy to see that there exists a constant $c$ such that $cy_*\in\partial u(y_*)$, where $\partial u$ denotes the subdifferential of $u$. Hence, $y_*$ is a critical point of~\eqref{RPG:subprobSphere}. In our experiments, two iterations are usually sufficient for the algorithm to achieve high accuracy.

\begin{algorithm}[ht!]
\caption{Solving the Riemannian Proximal Mapping for Oblique Manifold}
\label{alg:RPMOB}
\begin{algorithmic}[1]
\Require initial iterate $y_0$; $k = 0$;
\For {$k = 0, \ldots$}
\State Compute $s(y_k)$ and $t(y_k)$;
\State Compute $y_{k+1}$ via~\eqref{eq:ke003} with $\nabla h(y_k) = \left[s(y_k) x + t(y_k) \xi_x\right]/\tilde{\lambda}$;
\EndFor
\end{algorithmic}
\end{algorithm}

\subsection{Computations Related to Stiefel Manifold} \label{RPG:sect:Stiefel}
The Stiefel manifold $\St(p,n)$ is also a submanifold of $\mathbb{R}^{n\times p}$, and the tangent space of $\St(p,n)$ at a matrix $X\in\St(p,n)$ is given by
\begin{align*}
\T_X\St(p,n) =\{\eta\in\mathbb{R}^{n\times p}~|~X^T\eta+\eta^TX=0\}.
\end{align*}
Here we use the \whsecrev{}{Euclidean} metric
\begin{equation} \label{RPG:stiemetric}
\inner[X]{\eta_X}{\xi_X} = \trace\left(\eta_X^T \xi_X \right)
\end{equation}
as the Riemannian metric. The Riemannian gradient of the smooth term $f$ in~\eqref{RPG:spcast}  under the canonical metric is
\begin{equation*}
\grad f(X) = \nabla f(X) - \frac{1}{2} X (X^T \nabla f(X) + \nabla f(X)^T X),
\end{equation*}
where $\nabla f(X) = -2 A^T A X$ is the Euclidean gradient of $f$. \whsecrev{}{The retraction by polar decomposition
\begin{equation} \label{RPG:stieretr}
R_X(\eta_X) = (X + \eta_X) (I_p + \eta_X^T \eta_X)^{-1/2}
\end{equation}
is used. 
The vector transport by differentiated the retraction~\eqref{RPG:stieretr} is given in~\cite[Lemma~10.2.1]{HUANG2013} by
\begin{equation} \label{RPG:e108}
\mathcal{T}_{\eta_X} \xi_X = Y \Omega + (I_n - Y Y^T) \xi_X (Y^T (X + \eta_X))^{-1},
\end{equation}
where $Y = R_X(\eta_X)$ and $\Omega$ is the solution of the Sylvester equation $(Y^T(X + \eta_X)) \Omega + \Omega (Y^T(X + \eta_X)) = Y^T \xi_X - \xi_X^T Y$. The inverse vector transport by differentiated retraction and the adjoint of the inverse vector transport by differentiated retraction are derived in the Appendix~\ref{app1}. Specifically, we have
\[
\mathcal{T}_{{\eta_X}}^{-1} \zeta_Y = YA + P,
\]
where $Y = R_X(\eta_X)$, $P = (I_n - Y Y^T) \zeta_Y (Y^T(X + \eta_X))$ and $A$ is the solution of the Sylvester equation $X^T Y A + A Y^T X = [(Y^T \zeta_Y) (Y^T (X + \eta_X)) + (Y^T (X + \eta_X)) (Y^T \zeta_Y)] Y^T X - X^T P - P^T X$; and
\[
\mathcal{T}_{\eta_X}^{- \sharp} \xi_X = Y [B (X^T Y) (Y^T(X + \eta_X)) + (Y^T (X + \eta_X)) B (X^T Y) ] - (I_n - Y Y^T) (X B + X B^T - \xi_X) (Y^T (X + \eta_X)),
\]
where $B$ is the solution of the Sylvester equation $ Y^T X B + B X^T Y = Y^T \xi_X$.

}

%
%
%
%
In the case of the Stiefel manifold, Algorithm~\ref{RPG:a3}  in Section~\ref{RPG:sect:subproblem} will be  used to solve the Riemannian proximal mapping~\eqref{RPG:subproblem2}. Note that the subproblem~\eqref{RPG:subdir} can be solved by semismooth Newton method, which has been discussed in~\cite{CMSZ2019} in details.

\subsection{Experimental Setup} \label{RPG:sect:ExpSetup}
We will compare  RPG (Algorithm~\ref{RPG:a1}) and ARPG  (\whfirrev{}{Algorithms~\ref{RPG:a2} and \ref{RPG:alg:ARPG}})  with
the Riemannian proximal gradient methods  from~\cite{CMSZ2019}   and~\cite{HuaWei2019}. As stated previously, the Riemannian proximal gradient method introduced in \cite{CMSZ2019} (denoted ManPG) is based on a different Riemannian proximal mapping, namely the one in \eqref{RPG:subprobChen}.
Furthermore, a more practical variant called ManPG-Ada is also presented in \cite{CMSZ2019}, which can achieve faster empirical convergence  by adaptively adjusting the weight of the quadratic term in the cost function of the Riemannian proximal mapping. In contrast, similar to Algorithm~\ref{RPG:alg:ARPG}, the method proposed in~\cite{HuaWei2019} (denoted AManPG) attempts to accelerate ManPG using the Nesterov momentum technique. In our experiments, unless otherwise stated, RPG and ManPG terminate when the search direction $\eta_{x_k}^*$ satisfies $\|\eta_{x_k}^* \tilde{L}\|^2 < 10^{-8} n p$. The other algorithms terminate when their function values are smaller than the minimum of the function values obtained from RPG and ManPG. All experiments are performed in Matlab R2018b on a 64 bit Ubuntu platform with 3.5GHz CPU (Intel Core i7-7800X).
\kwcomm{note: RPG-Ada maybe not needed so related statements are deleted} \whcomm{[Done]}{}

The parameters in ManPG, ManPG-Ada and AManPG are set to their default values, as in the corresponding papers. It is worth noting that, since each column of a matrix on the oblique manifold is a point on the unit sphere, the Riemannian proximal mapping~\eqref{RPG:subprobChen}  for the optimization problem~\eqref{RPG:spcaob} can be solved by the semismooth Newton method column by column.

The parameters in RPG and ARPG are chosen as follows. For the problem on the oblique manifold, the constants $\tilde{L}$ and $\mathfrak{L}$ are set to be $4 \|A\|_F^2$ and $\frac{1}{2} \|A\|_F^2$, respectively. The parameters $N$, $N_{\min}$ and $N_{\max}$ for the safeguard algorithm are set to be $5$, $2$, and $10$, respectively. The enlarging parameter $\tau$, line search parameter $\sigma$, shrinking parameter $\nu$ for step size, and the maximum number of iterations $N_{\mathrm{ls}}$ in the line search are set to be $1.1$, $0.0001$, $0.5$, and $3$, respectively. Algorithm~\ref{alg:RPMOB} terminates when the maximum value of $|x^T y_k - x^T y_{k+1}|$ and $|\xi_x^T y_k - \xi_x^T y_{k+1}|$ is smaller than $10^{-10}$.  
For the problem on the Stiefel manifold, the constants $\tilde{L}$ and $\mathfrak{L}$ are set to be $2 \|A\|_2$ and $1.6 \|A\|_F^2$, respectively. The parameters $N$, $N_{\min}$ and $N_{\max}$ for the safeguard algorithm are set to be $5$, $3$, and $5$, respectively.
Algorithm~\ref{RPG:a3} terminates whenever one of the following three conditions is reached:  $\|\xi_k^*\| < {\whsecrev{}{ 0.003 }}$,  $\|\alpha \mathcal{T}_{R_{\eta_k}}^{-1} \xi_k^* \| < {\whsecrev{}{ 0.003 }}$, or the number of iterations exceeds 50. 
The remaining settings are the same as those for the problem on the oblique manifold.



\whfirrev{}{
Two different types of data matrices $A$ are tested:
\begin{enumerate}
\item \textbf{Random data:} Generate $A$ such that its entries are drawn from the standard normal distribution $\mathcal{N}(0, 1)$. Then the matrix $A$ is shifted and normalized such that their columns have mean zero and standard deviation one.
\item \textbf{Synthetic data:} Five principal components shown in Figure~\ref{figPCs} are used. We repeat each of them $m/5$ times to obtain an $m$-by-$n$ noise-free matrix. The matrix $A$ is computed by further adding a random noise matrix, where each entry of the noise matrix is drawn from~$\mathcal{N}(0, 0.25)$. Finally the matrix $A$ is shifted and normalized such that their columns have mean zero and standard deviation one. Such idea has been used in~\cite{SCLEE2018} for constructing data that are close to real data.
\end{enumerate}
\begin{figure}[ht!]
\centering
\includegraphics[width=0.8\textwidth]{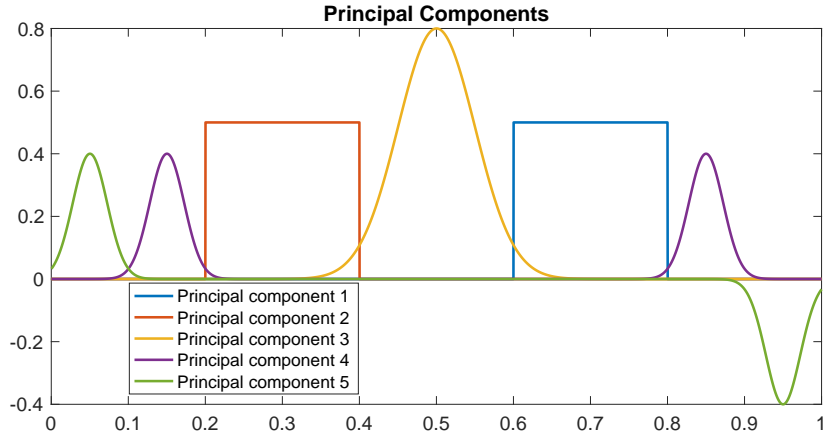}
\caption{
The five principal components used in the synthetic data.
}
\label{figPCs}
\end{figure}
The initial iterate is the leading $r$ right singular vectors of the matrix~$A$.
}


\subsection{Comparing RPG and P-ARPG with ManPG(-Ada) and AManPG }
\whfirrev{}{This sections compares the algorithms developed in this paper with those in \cite{CMSZ2019}. and~\cite{HuaWei2019}. Even though \whsecrev{}{V-ARPG} is empirically slightly faster than \whsecrev{}{P-ARPG} as can be seen from Section~\ref{sec:ARPG}, we choose to compare other algorithms with \whsecrev{}{P-ARPG} here since it has the basic global convergence. }
\whfirrev{}{Figures~\ref{RPG:SPCAOB} and~\ref{RPG:SPCAOB_artificial} show} the performance of the aforementioned algorithms under multiple values of $n$, $p$ and $\lambda$ for the sparse PCA model on the oblique manifold \whfirrev{}{with random data and synthetic data}. As illustrated from the left and middle plots \whfirrev{}{of the two figures},  AManPG and \whfirrev{}{P-ARPG} take fewer number of iterations and less computational time to converge than the other algorithms. Moreover, since it is more efficient to solve~\eqref{RPG:subproblem2} than to solve~\eqref{RPG:subprobChen}, \whfirrev{}{P-ARPG} is slightly faster than AManPG.
Note that the solutions found by all the test algorithms have \whfirrev{}{similar percentage of non-zero entries}, as shown in the right plots of \whfirrev{}{Figures~\ref{RPG:SPCAOB} and~\ref{RPG:SPCAOB_artificial}}, \whfirrev{}{where the sparsity level is the portion of entries that are less than $10^{-5}$ in magnitude}. The figure also suggests that compared to ManPG and RPG the adaptive scheme used in ManPG-Ada is able to reduce the number of iterations upon convergence. However, the Nestrerov momentum acceleration technique used in AManPG and \whfirrev{}{P-ARPG} can further reduce the number of iterations without noticeably increasing the per iteration cost when solving the sparse PCA problem \eqref{RPG:spcaob}.

In addition, \whfirrev{}{Figures~\ref{RPG:SPCAOB_f} and~\ref{RPG:SPCAOB_f_artificial}, respectively,} display two {\em function values versus iterations} plots from two typical random instances, \whfirrev{}{of random data and synthetic data}.  Together with the middle plots in \whfirrev{}{Figures~\ref{RPG:SPCAOB} and~\ref{RPG:SPCAOB_artificial}}, it suggests that in the case of the oblique manifold the Riemannian proximal mappings~\eqref{RPG:subproblem2} and~\eqref{RPG:subprobChen} (the one used in~\cite{CMSZ2019}) perform similarly in the sense that it takes ManPG and RPG (respectively, AManPG and APRG)  approximately the same number of iterations to converge.


\begin{figure}
\scalebox{1.0}{
\hspace{-0.12\textwidth}\includegraphics[width=1.2\textwidth]{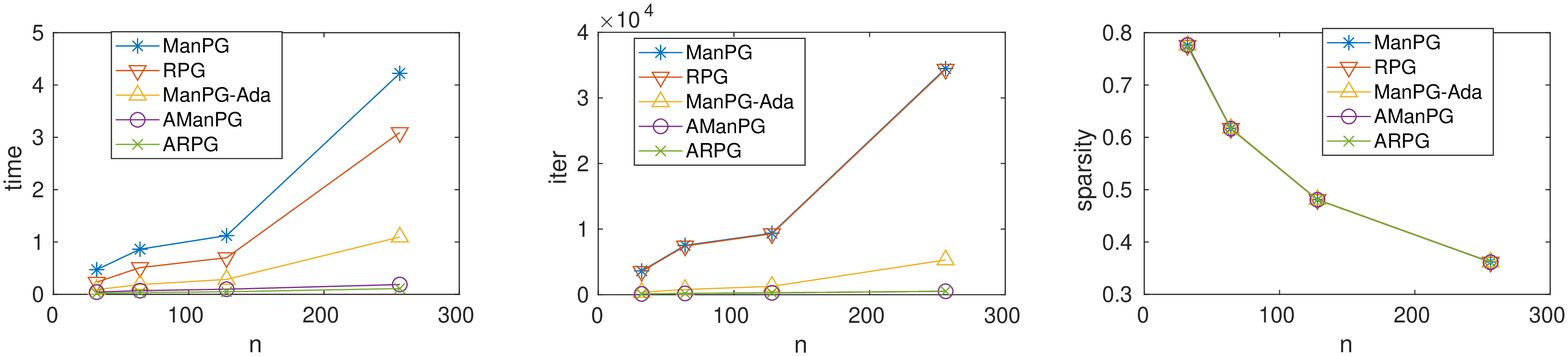} \\
}
\scalebox{1.0}{
\hspace{-0.12\textwidth}\includegraphics[width=1.2\textwidth]{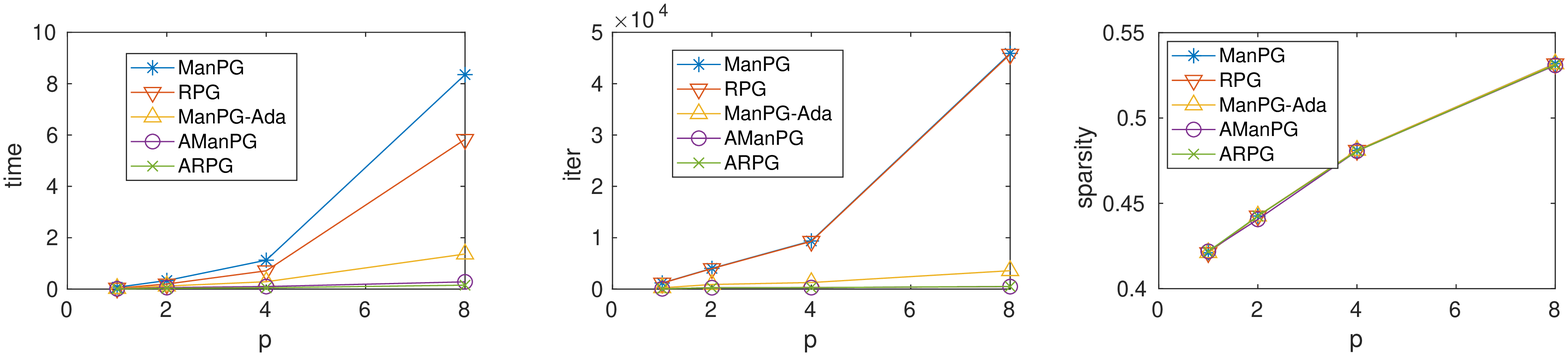} \\
}
\scalebox{1.0}{
\hspace{-0.12\textwidth}\includegraphics[width=1.2\textwidth]{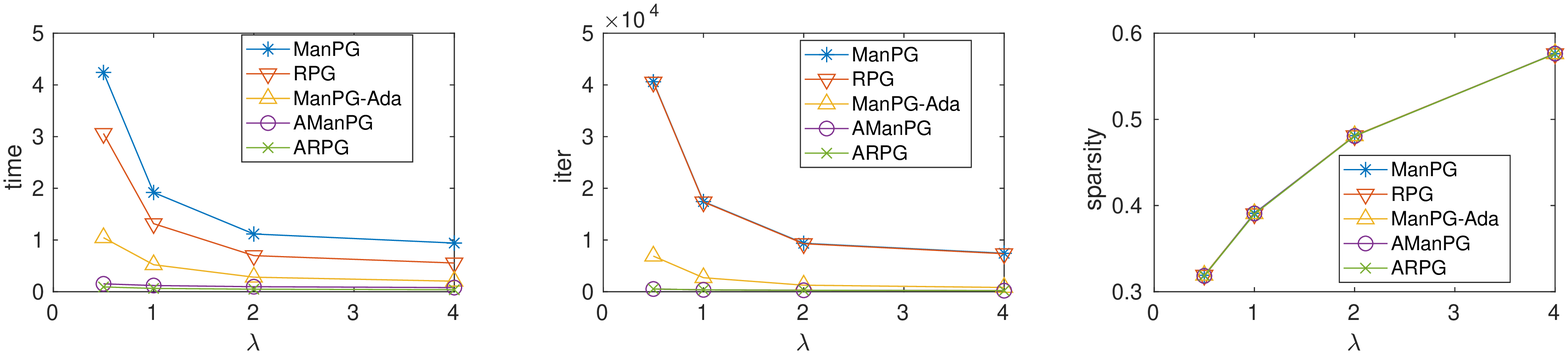}
}
\caption{
\whfirrev{}{Random data:} Average results of 10 random runs for the sparse PCA on the oblique manifold~\eqref{RPG:spcaob}. Top: multiple values $n = \{32, 64, 128, 256\}$ with $p = 4$, $m = 20$, and $\lambda = 2$; Middle: multiple values $p = \{1, 2, 4, 8\}$ with $n = 128$, $m = 20$, and $\lambda = 2$; Bottom: Multiple values $\lambda = \{0.5, 1, 2, 4\}$ with $n = 128$, $p = 4$, and $m = 20$.
}
\label{RPG:SPCAOB}
\end{figure}

\begin{figure}
\centering
\includegraphics[width=0.8\textwidth]{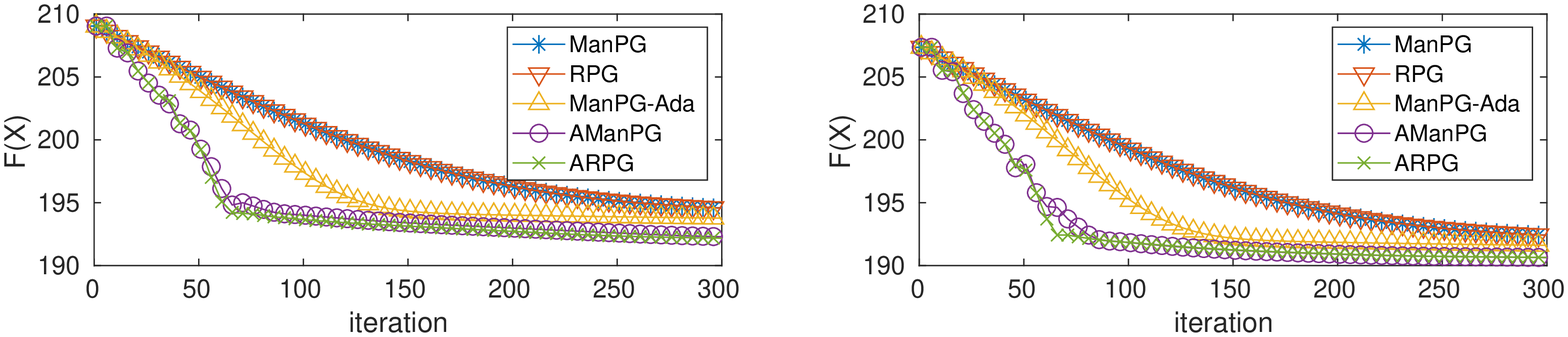}
\caption{
\whfirrev{}{Random data:} Comparison of the tested methods using two typical instances for the sparse PCA on the oblique manifold~\eqref{RPG:spcaob}. $n = 1024$, $p = 4$, $\lambda = 2$, $m = 20$.
}
\label{RPG:SPCAOB_f}
\end{figure}

\begin{figure}
\scalebox{1.0}{
\hspace{-0.12\textwidth}\includegraphics[width=1.2\textwidth]{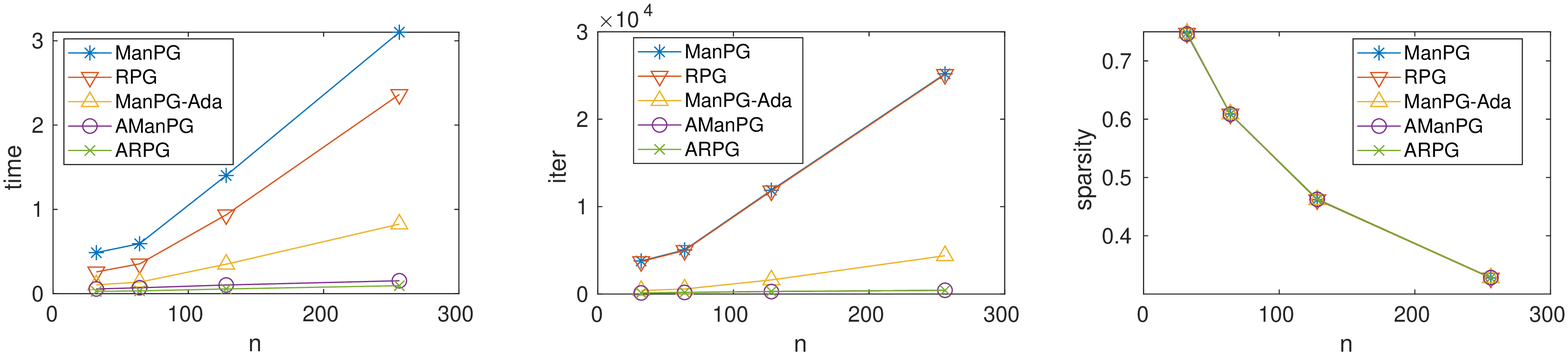} \\
}
\scalebox{1.0}{
\hspace{-0.12\textwidth}\includegraphics[width=1.2\textwidth]{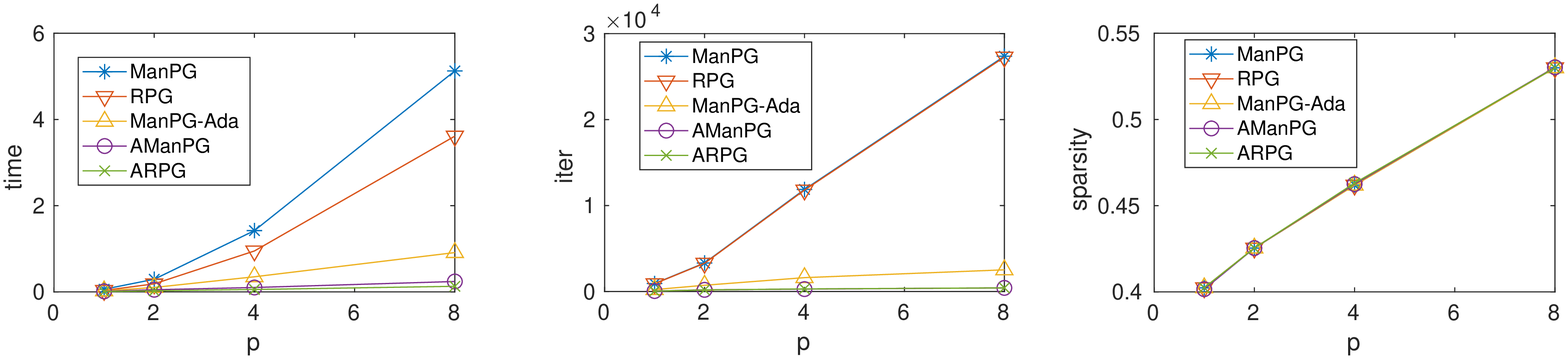} \\
}
\scalebox{1.0}{
\hspace{-0.12\textwidth}\includegraphics[width=1.2\textwidth]{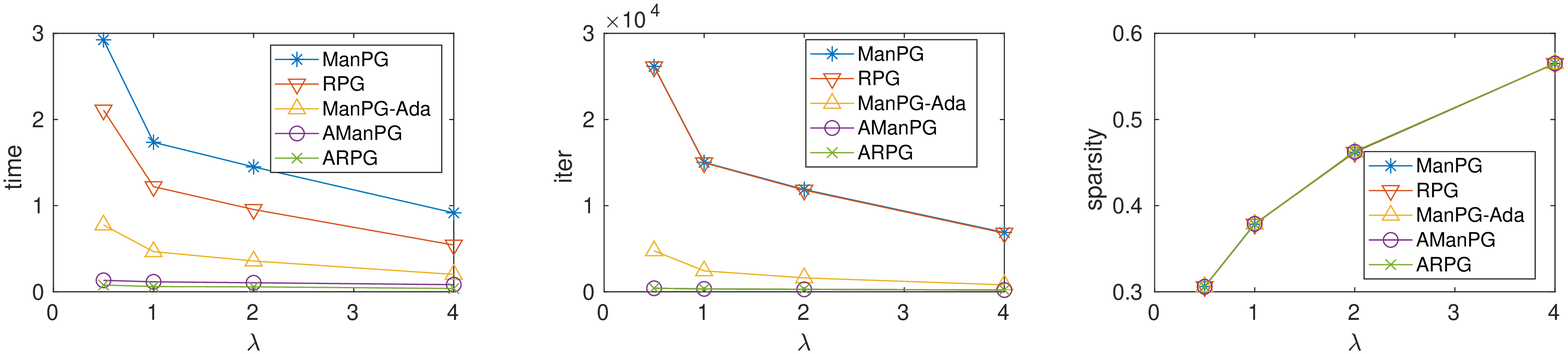}
}
\caption{
\whfirrev{}{Synthetic data: Average results of 10 random runs for the sparse PCA on the oblique manifold~\eqref{RPG:spcaob}. Top: multiple values $n = \{32, 64, 128, 256\}$ with $p = 4$, $m = 20$, and $\lambda = 2$; Middle: multiple values $p = \{1, 2, 4, 8\}$ with $n = 128$, $m = 20$, and $\lambda = 2$; Bottom: Multiple values $\lambda = \{0.5, 1, 2, 4\}$ with $n = 128$, $p = 4$, and $m = 20$.}
}
\label{RPG:SPCAOB_artificial}
\end{figure}

\begin{figure}
\centering
\includegraphics[width=0.8\textwidth]{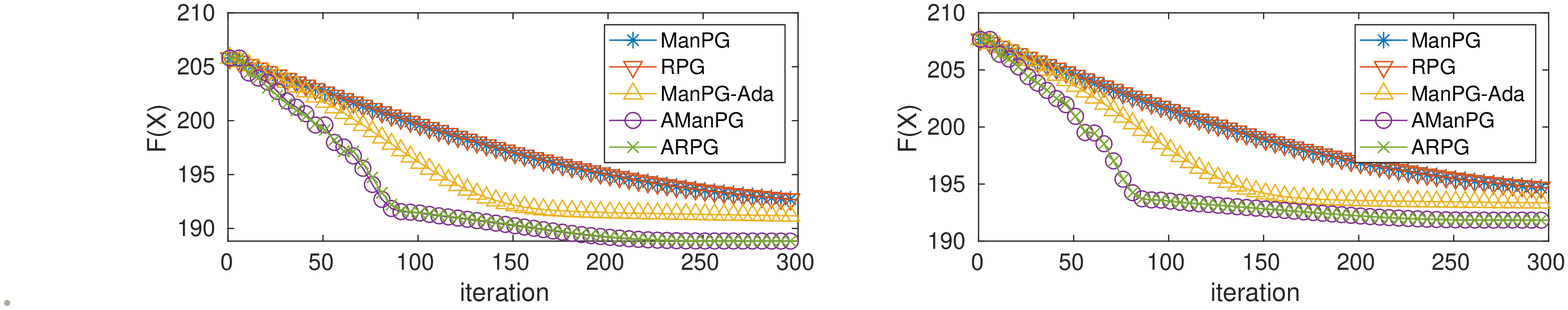}
\caption{
\whfirrev{}{Synthetic data: Comparison of the tested methods using two typical instances for the sparse PCA on the oblique manifold~\eqref{RPG:spcaob}. $n = 1024$, $p = 4$, $\lambda = 2$, $m = 20$.}
}
\label{RPG:SPCAOB_f_artificial}
\end{figure}

The comparisons are then repeated for the sparse PCA model on the Stiefel manifold, see \whfirrev{}{Figures~\ref{RPG:SPCAST}, ~\ref{RPG:SPCAST_f}, \ref{RPG:SPCAST_artificial}, and~\ref{RPG:SPCAST_f_artificial}} for the computational results.  
In this case, it is readily observed that the Riemannian proximal mappings~\eqref{RPG:subproblem2} and~\eqref{RPG:subprobChen} have different effects on the convergence of the algorithms. Figures~\ref{RPG:SPCAST_f} and~\ref{RPG:SPCAST_f_artificial} show that the Riemannian proximal gradient methods with~\eqref{RPG:subproblem2}  need fewer number of iterations to converge than those  with~\eqref{RPG:subprobChen}.
However, the Riemannian proximal gradient methods with~\eqref{RPG:subproblem2} are more costly \whsecrev{}{since the subproblem is solved by Algorithm~\ref{RPG:a3} that may involve multiple runnings of semi-smooth Newton algorithms.} 
Therefore, for problem~\eqref{RPG:spcast}, using the new Riemannian proximal mapping~\eqref{RPG:subproblem2} can  reduce the number of iterations required for the algorithms to converge, but will increase the overall computational time due to the excessive cost for solving the new Riemannian proximal mapping.

\begin{figure}
\scalebox{1.0}{
\hspace{-0.12\textwidth}\includegraphics[width=1.2\textwidth]{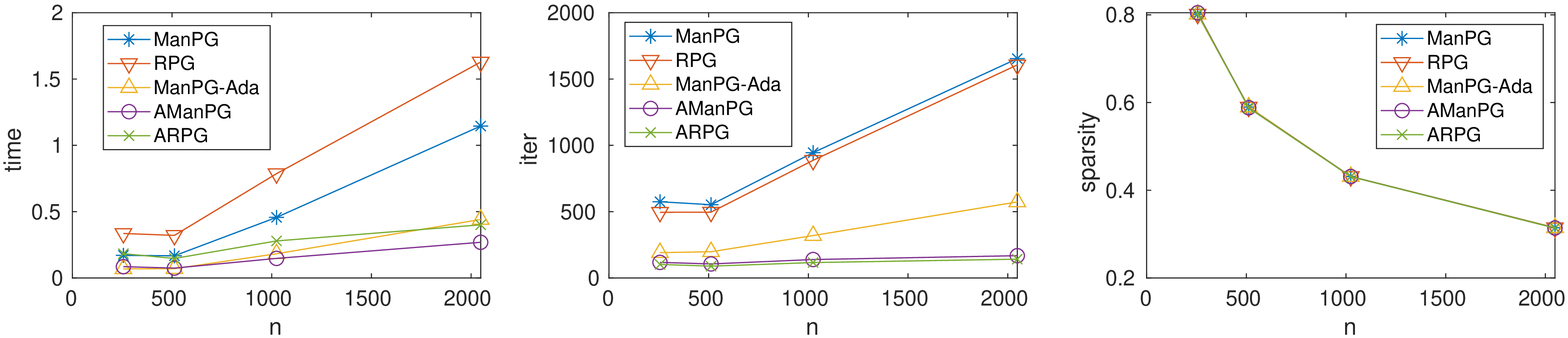} \\
}
\scalebox{1.0}{
\hspace{-0.12\textwidth}\includegraphics[width=1.2\textwidth]{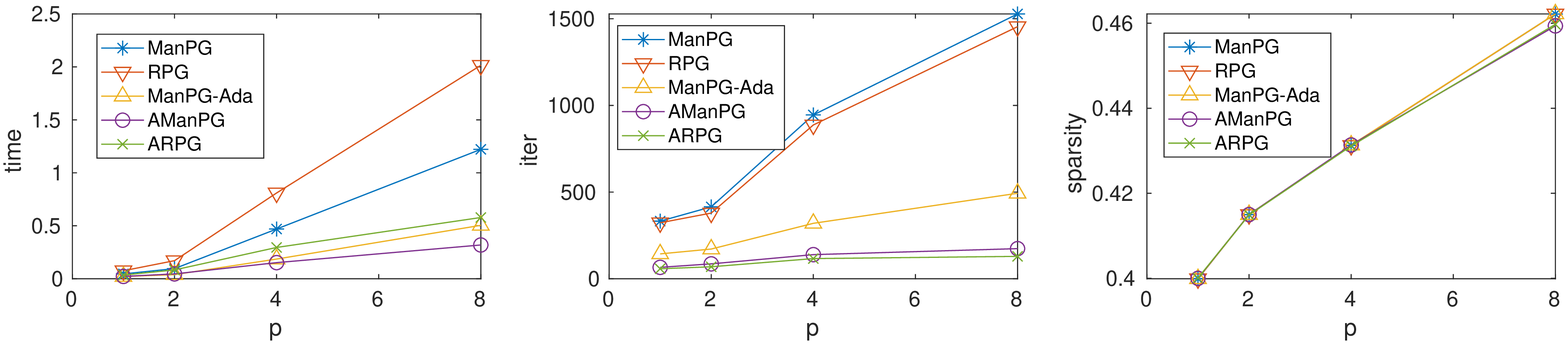} \\
}
\scalebox{1.0}{
\hspace{-0.12\textwidth}\includegraphics[width=1.2\textwidth]{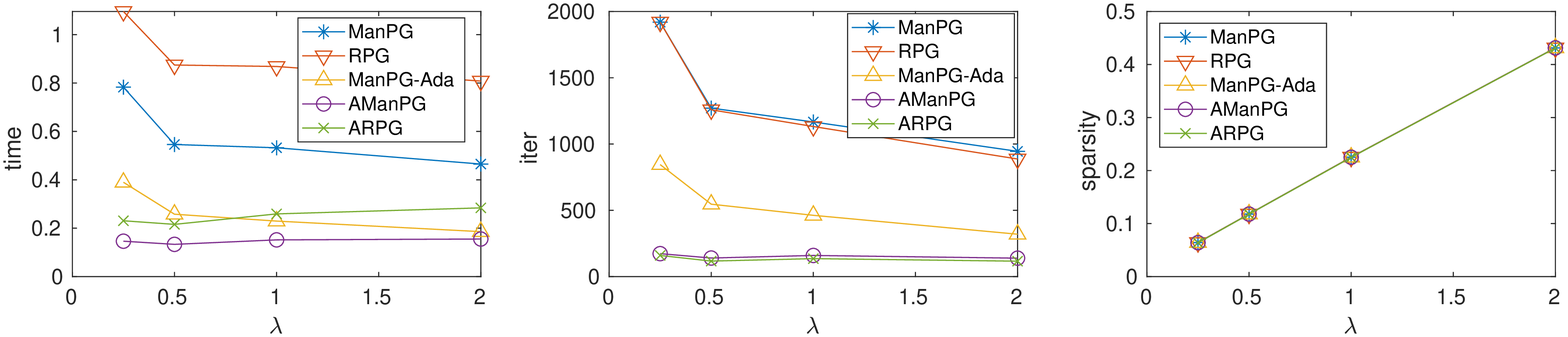}
}
\caption{
\whfirrev{}{Random data:} Average results of 10 random runs for the sparse PCA on the Stiefel manifold~\eqref{RPG:spcast}. Top: multiple values $n = \{256, 512, 1024, 2048\}$ with $p = 4$, $m = 20$, and $\lambda = 2$; Middle: multiple values $p = \{1, 2, 4, 8\}$ with $n = 1024$, $m = 20$, and $\lambda = 2$; Bottom: Multiple values $\lambda = \{0.25, 0.5, 1, 2\}$ with $n = 1024$, $p = 4$, and $m = 20$.
}
\label{RPG:SPCAST}
\end{figure}

\begin{figure}
\centering
\includegraphics[width=0.99\textwidth]{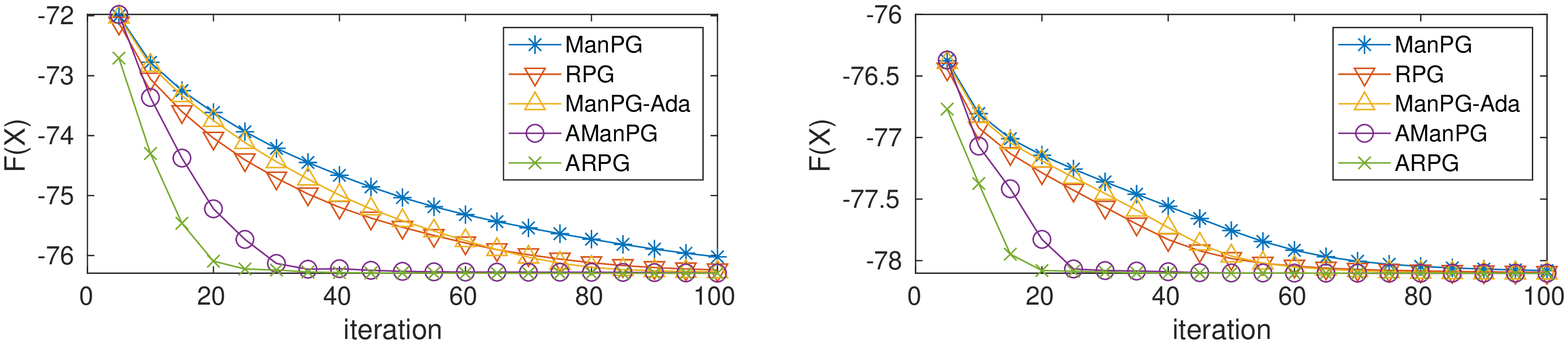}
\caption{
\whfirrev{}{Random data:} Comparison of the tested methods using two typical instances for the sparse PCA on the Stiefel manifold~\eqref{RPG:spcast}. $n = 1024$, $p = 4$, $\lambda = 2$, $m = 20$.
}
\label{RPG:SPCAST_f}
\end{figure}
\begin{figure}
\scalebox{1.0}{
\hspace{-0.12\textwidth}\includegraphics[width=1.2\textwidth]{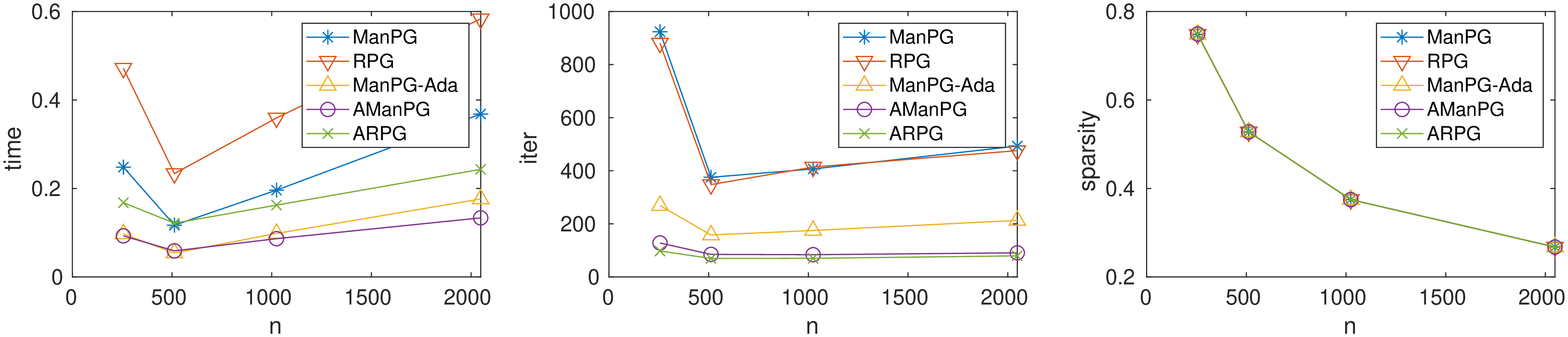} \\
}
\scalebox{1.0}{
\hspace{-0.12\textwidth}\includegraphics[width=1.2\textwidth]{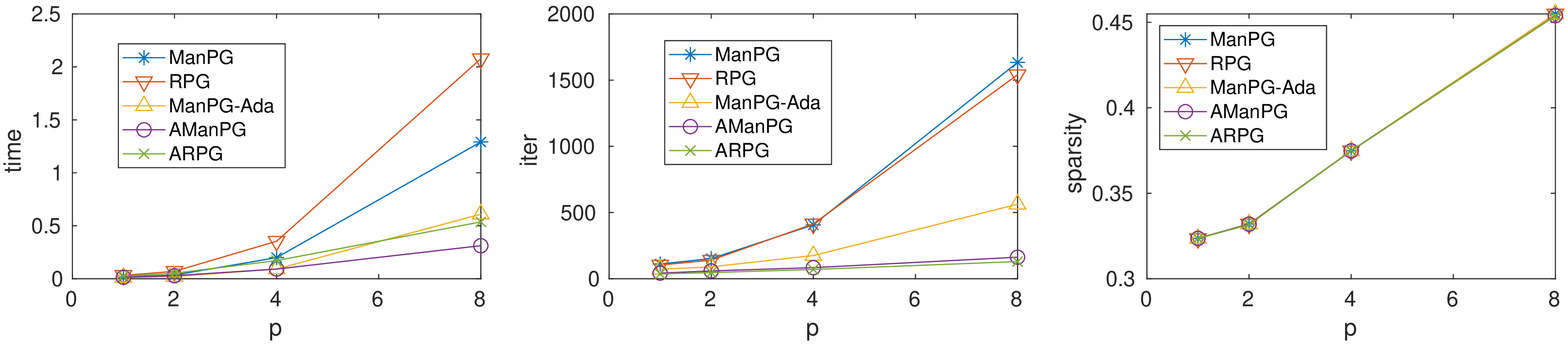} \\
}
\scalebox{1.0}{
\hspace{-0.12\textwidth}\includegraphics[width=1.2\textwidth]{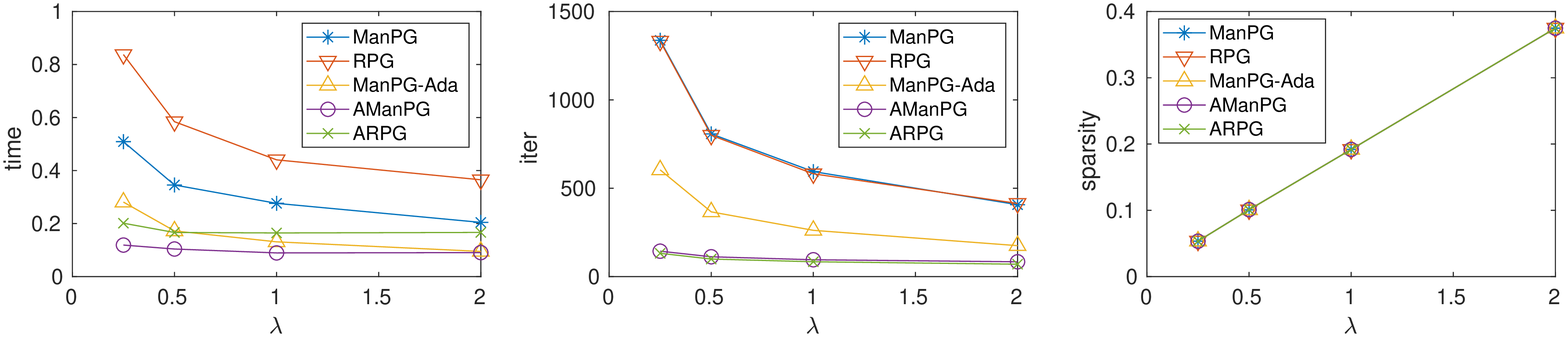}
}
\caption{
\whfirrev{}{Synthetic data: Average results of 10 random runs for the sparse PCA on the Stiefel manifold~\eqref{RPG:spcast}. Top: multiple values $n = \{256, 512, 1024, 2048\}$ with $p = 4$, $m = 20$, and $\lambda = 2$; Middle: multiple values $p = \{1, 2, 4, 8\}$ with $n = 1024$, $m = 20$, and $\lambda = 2$; Bottom: Multiple values $\lambda = \{0.25, 0.5, 1, 2\}$ with $n = 1024$, $p = 4$, and $m = 20$.}
}
\label{RPG:SPCAST_artificial}
\end{figure}

\begin{figure}
\centering
\includegraphics[width=0.99\textwidth]{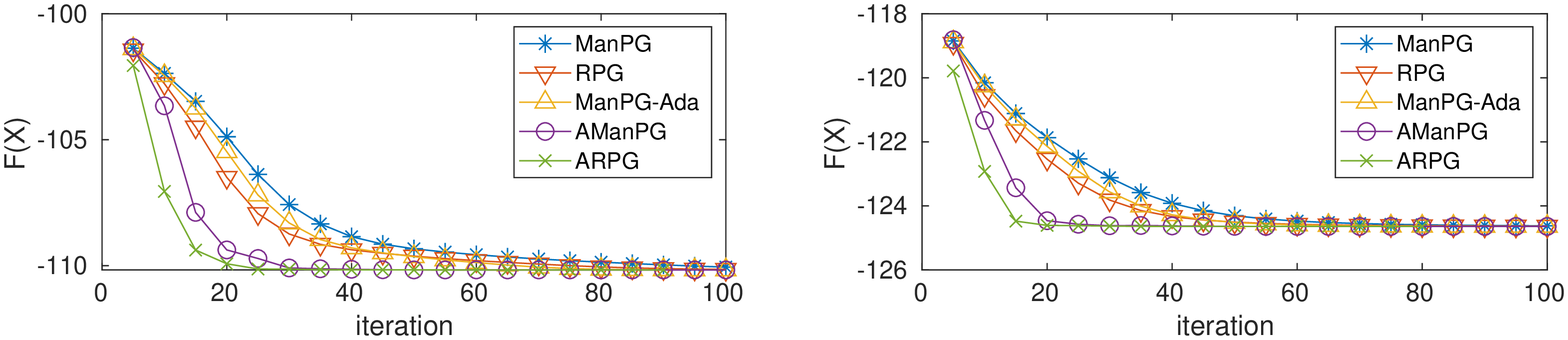}
\caption{
\whfirrev{}{Synthetic data: Comparison of the tested methods using two typical instances for the sparse PCA on the Stiefel manifold~\eqref{RPG:spcast}. $n = 1024$, $p = 4$, $\lambda = 2$, $m = 20$.}
}
\label{RPG:SPCAST_f_artificial}
\end{figure}

\section{Conclusion and Future Work}\label{RPG:sect:Con}

In this paper we propose a  Riemannian proximal gradient method 
as well as its accelerated 
 for solving nonsmooth optimization problems on a Riemannian manifold. 
\whfirrev{}{Convergence analysis has been established for the Riemannian proximal gradient method. In particular,  the convergence analysis based on the Riemannian KL property is provided, which applies to the sparse PCA problem. }
A practical Riemannian proximal gradient method is also constructed  which guarantees the global convergence under the minimum requirements, and at the same time can achieve an empirical accleration.  We compare our methods with the Riemannian proximal gradient methods in~\cite{CMSZ2019} and~\cite{HuaWei2019} using two  optimization problems from sparse PCA. Numerical results show that our methods are superior in terms of the number of iterations for both the optimization problems, and they are also superior in terms of the runtime for the optimization problem  on the oblique manifold. However, for the optimization problem  on the Stiefel manifold the Riemannian proximal methods in~\cite{CMSZ2019} and~\cite{HuaWei2019} have the advantage of solving the Riemannian proximal mapping more efficiently, hence are faster. 


As suggested by the numerical experiments, the efficacy of the proposed methods hinges  on the  efficient solution to the Riemannian proximal mapping. For future work we will look for new algorithms for solving the Riemannian proximal mapping, possibly those based on different retractions and vector transports. In this paper numerical tests focus primarily on optimization problems based on the embedded submanifolds. It is also interesting to see how the algorithms work for other manifolds, for example the Grassman manifolds.
\whfirrev{}{On the theoretical side, we would like to study  the  convergence behavior of the accelerated Riemannian proximal gradient methods. In addition, Theorem~\ref{RPG:th5} shows that the restriction of a semialgebraic function onto the Stiefel manifold satisfies the Riemannian KL property and there exists a $\theta\in(0,1]$ such that the corresponding 
desingularising function has the form $\frac{C}{\theta}t^{\theta}$. For future work it will be interesting to   calculate the exact value of $\theta$ for typical applications problems.  }
\section*{Acknowledgments}
The authors would like to thank Zirui Zhou for fruitful discussions on the KL property, and thank Shiqian Ma for kindly sharing their codes with us.

\bibliographystyle{plain}
\kw{reference WH19 needed to be edited later}
\bibliography{WHlibrary}

\appendix

\section{Proofs of Lemmas~\ref{RPG:le16} and \ref{RPG:le19}}\label{app:proofs}
\subsection{Proof of Lemma~\ref{RPG:le16}}
\begin{proof}
{

Since $R$ is smooth and therefore $C^2$, the mapping $m: \T \mathcal{M} \times \mathbb{R} \rightarrow \T \mathcal{M}: (\eta, t) \mapsto \frac{D}{d t} \frac{d}{d t} R\left(t \eta \right) $ is continuous where $\frac{D}{d t}$ denotes the covariant derivative along the curve $t \mapsto R(t \eta)$, see definition of covariant derivative in e.g.,~\cite[Proposition~2.2]{dC92}. 
In addition, since the set $\mathcal{D} = \{ (\eta_x, t) \mid x \in \bar{\Omega}, \|\eta_x\|_x = 1, 0 \leq t \leq \delta_T \}$ is compact, there exists a positive constant $b_2$ such that 
\begin{equation} \label{RPG:e87}
\|m(\eta, t)\| \leq b_2
\end{equation}
for all $(\eta, t) \in \mathcal{D}$.

If $\eta_x = 0_x$, then the conclusion holds. Otherwise, let $\tilde{\eta}_x = \eta_x / \|\eta_x\|_x$. Since $\dist(x, y)$ is the shortest distance of a curve connecting $x$ and $y$, we have
\begin{align} \label{RPG:e88}
\dist(x, y) \leq& \int_0^{\|\eta_x\|_x} \left\|\frac{d}{d t} R_{x} \left(t \tilde{\eta}_x\right)\right\|_{R_x(t \tilde{\eta}_x)} d t,
\end{align}
where the right side is the length of the curve $R_x(t \eta_x)$. Using the Cauchy-Schwarz inequality and the invariance of the metric by the Riemannian affine connection, we have
\begin{align*}
\left| \frac{d}{d t} \left\|\frac{d}{d t} R_{x} (t \tilde{\eta}_x)\right\| \right| =& \left| \frac{d}{d t} \sqrt{ \inner[]{\frac{d}{d t} R_{x} (t \tilde{\eta}_x)}{\frac{d}{d t} R_{x} (t \tilde{\eta}_x)} } \right|
= \left| \frac{ \inner[]{ \frac{D}{d t} \frac{d}{d t} R_{x} (t \tilde{\eta}_x) }{ \frac{d}{d t} R_{x} (t \tilde{\eta}_x) } }{ \left\| \frac{d}{d t} R_{x} (t \tilde{\eta}_x) \right\| } \right| \\
\leq& \left\| \frac{D}{d t} \frac{d}{d t} R_{x} (t \tilde{\eta}_x) \right\| \leq b_2. \qquad \hbox{ (by~\eqref{RPG:e87})}
\end{align*}
It follows that
\begin{equation} \label{RPG:e89}
\int_0^{\|\eta_x\|_x} \left\|\frac{d}{d t} R_{x} (t \tilde{\eta}_x)\right\|_{R_x(t \eta_x)} d t \leq \int_0^{\|\eta_x\|_x} (1 + b_2 t) d t = \|\eta_x\|_x + \frac{b_2}{2} \|\eta_x\|_x^2 \leq b_3 \|\eta_x\|_x,
\end{equation}
where $b_3 = 1 + b_2 \delta_T / 2$. Combining~\eqref{RPG:e88} and~\eqref{RPG:e89} yields the result.
}
\end{proof}
\subsection{Proof of Lemma~\ref{RPG:le19}}
\begin{proof}

{
For any $x \in \bar{\Omega}$, there exists a positive constant $\varrho_x$ and a neighborhood $\mathcal{U}_x$ of $x$ such that $\mathcal{U}_x$ is a totally restrictive set with respect to $\varrho_x$. Since $\bar{\Omega}$ is compact, there exists finite number of $x_i$ such that their totally restrictive sets covering $\bar{\Omega}$, i.e., $\cup_{i = 1}^t \mathcal{U}_{x_i} \supset \bar{\Omega}$. Let $\delta = \frac{1}{2} \min(\varrho_{x_i}, i = 1, \ldots, t)$. We have that for any $x \in \bar{\Omega}$, the retraction $R$ is a diffeomorphism on $\mathbb{B}(x, 2 \delta)$. Therefore, $\mathcal{T}_{R_{\eta_x}}^{\sharp}$ is invertible for any $\eta_x$ satisfying $\|\eta_x\|_x < 2 \delta$.

Since $\mathcal{T}_{R_{\eta_x}}^{-\sharp}$ is smooth with respect to $\eta_x$ and the set $\{\eta_x \mid x \in \bar\Omega, \|\eta_x\| \leq \delta\}$ is compact, there exists a constant $L_t > 0$ such that
\begin{equation} \label{RPG:e95}
\|\mathcal{T}_{R_{\eta_x}}^{-\sharp}\| \leq L_t, \forall \eta_x \in \{\eta_x \mid x \in \bar\Omega, \|\eta_x\| \leq \delta\}.
\end{equation}
By Lemma~\ref{RPG:le16}, there exists a positive constant $\kappa$ such that 
\begin{equation} \label{RPG:e94}
\dist(x, R_x(\eta_x)) \leq \kappa \|\eta_x\|_x
\end{equation}
for all $x \in \bar{\Omega}$ and for all $\eta_x \in \mathcal{B}(0_x, \delta)$. Let $\tilde{\delta} = \min(\delta, i(\bar{\Omega}) / \kappa)$. For all $\eta_x \in \mathcal{B}(0_x, \tilde{\delta})$ it holds that
\begin{equation} \label{RPG:e93}
\dist(x, R_x(\eta_x)) \leq \kappa \|\eta_x\|_x \leq i(\bar{\Omega}).
\end{equation}
By the definition of locally Lipschitz continuity of a vector field, we have 
$\|\mathcal{P}_{\gamma}^{0 \leftarrow 1} \xi_y - \xi_x\|_x \leq L_v \dist(x, y)$
for any $x, y \in \bar\Omega$ and $\dist(x, y) < i(\bar{\Omega})$. Since the parallel translation is isometric, it holds that $\|\xi_y - \mathcal{P}_{\gamma}^{1 \leftarrow 0} \xi_x\|_y \leq L_v \dist(x, y)$. Using~\eqref{RPG:e94} and~\eqref{RPG:e93} yields
\begin{equation} \label{RPG:e90}
\|\xi_y - \mathcal{P}_{\gamma}^{1 \leftarrow 0} \xi_x\|_x \leq L_v \dist(x, y) \leq L_v \kappa \|\eta_x\|_x,
\end{equation}
for all $\eta_x \in \mathcal{B}(0_x, \tilde{\delta})$, where $y = R_x(\eta_x)$.
%
%

By~\cite[Lemma~3.5]{HGA2014}, for any $\bar{x} \in \mathcal{M}$, there exists a neighborhood $\mathcal{U}_{\bar{x}}$ of $\bar{x}$ and a positive number $L_{\bar{x}}$ such that for all $x, y \in \mathcal{U}_{\bar{x}}$ it holds that
\[
\|\mathcal{P}_{\gamma}^{1 \rightarrow 0} \xi_x - \mathcal{T}_{\eta_x}^{-\sharp} \xi_x\|_y \leq L_x \|\xi_x\|_x \|\eta_x\|_x.
\]
Since $\bar\Omega$ is compact, there exist finite number of $\bar{x}$, denoted by $\bar{x}_1, \ldots, \bar{x}_t$, such that $\cup_{i=1}^t \mathcal{U}_{\bar{x}_i} \supset \Omega$. Let $L_{cc}$ denote $\max(L_{\bar{x}_i}, i = 1, \ldots, t)$, and $\sigma = \sup_{r}\left\{ r \in \mathbb{R} \mid \exists i, \hbox{ such that } \mathbb{B}(z, r) \subseteq \mathcal{U}_{\bar{x}_i} \forall z \in \bar\Omega \right\}$. Since the number of $\bar{x}_i$ is finite, we have $L_{cc} < \infty$ and $\sigma > 0$. Therefore, for any $x, y \in \bar\Omega$ satisfying $\dist(x, y) < \sigma$, it holds that
\begin{equation} \label{RPG:e106}
\|\mathcal{P}_{\gamma}^{1 \rightarrow 0} \xi_x - \mathcal{T}_{\eta_x}^{-\sharp} \xi_x\|_y \leq L_{cc} \|\xi_x\|_x \|\eta_x\|_x.
\end{equation}
Note that $\|\eta_x\|_x < \sigma / \kappa$ implies $\dist(x, y) < \sigma$ by~\eqref{RPG:e94}.
It follows from~\eqref{RPG:e95}, ~\eqref{RPG:e90} and~\eqref{RPG:e106} that for any $x, y \in \bar\Omega$ satisfying $\|\eta_x\|_x < \min(\sigma / \kappa, \tilde{\delta})$,
\[
\|\xi_y - \mathcal{T}_{\eta_x}^{-\sharp} (\xi_x + a \eta_x)\|_y \leq \|\xi_y - \mathcal{P}_{\gamma}^{1 \leftarrow 0} \xi_x\|_y + \|\mathcal{P}_{\gamma}^{1 \leftarrow 0} \xi_x - \mathcal{T}_{\eta_x}^{-\sharp} \xi_x\|_y + \|\mathcal{T}_{\eta_x}^{-\sharp} a \eta_x\|_y \leq L_c \|\eta_x\|_x,
\]
where $L_{c} = L_v \kappa + L_{cc} \sup_{x \in \bar\Omega} \|\xi_x\|_x + a L_t$.
}

\end{proof}
\section{Proof of Theorem~\ref{RPG:LocalRateKL}}\label{appen:localKL}
\begin{proof}
Applying $\varsigma(t) = \frac{C}{\theta} t^{\theta}$ to~\eqref{RPG:e99} yields
\begin{equation} \label{RPG:e100}
\|\eta_{x_{k}}^*\|_{x_{k}}^2 \leq \|\eta_{x_{k-1}}^*\|_{x_{k-1}} \frac{C L_c}{\theta \beta} \left( (F(x_k) - F(x_*))^{\theta} - (F(x_{k+1} - F(x_*)))^{\theta} \right), \forall k > \hat{l}.
\end{equation}
Taking square root to the both sides of~\eqref{RPG:e100} and noting $2 \sqrt{a b} \leq a + b$ for all $a, b\geq 0$, we have
\begin{equation} \label{RPG:e101}
2 \|\eta_{x_{k}}^*\|_{x_{k}} \leq \|\eta_{x_{k-1}}^*\|_{x_{k-1}} + \frac{C L_c}{\theta \beta} \left( (F(x_k) - F(x_*))^{\theta} - (F(x_{k+1} - F(x_*)))^{\theta} \right), \forall k > \hat{l}.
\end{equation}
Summing the both sides from $p > \hat{l}$ to $\infty$ yields
\begin{equation} \label{RPG:e103}
\sum_{k = p}^{\infty} \|\eta_{x_k}^*\|_{x_k} \leq \|\eta_{x_{p-1}}^*\|_{x_{p-1}} + \frac{C L_c}{\theta \beta} (F(x_p) - F(x_*))^{\theta}.
\end{equation}
By~\eqref{eq:kwrev01}, we have $ \frac{1}{C} (F(x_k) - F(x_*))^{1 - \theta} \leq \dist(0, \partial F(x_k)) $. Combining this inequality with~\eqref{RPG:e66} yields 
\begin{equation} \label{RPG:e102}
\frac{1}{C} (F(x_k) - F(x_*))^{1 - \theta} \leq L_c \|\eta_{x_{k-1}}^*\|_{x_{k-1}}.
\end{equation}
It follows from~\eqref{RPG:e103} and~\eqref{RPG:e102} that
\begin{equation} \label{RPG:e104}
\sum_{k = p}^{\infty} \|\eta_{x_k}^*\|_{x_k} \leq \|\eta_{x_{k-1}}^*\|_{x_{k-1}} + \frac{C L_c}{\theta \beta} \left( C L_c \|\eta_{x_{k-1}}^*\|_{x_{k-1}} \right)^{\frac{\theta}{1 - \theta}}, \quad \forall p > \hat{l}.
\end{equation}
Define $\Delta_k = \sum_{i = k}^{\infty} \|\eta_{x_i}^*\|_{x_i}$. Therefore, inequality~\eqref{RPG:e104} becomes
\begin{equation} \label{RPG:e105}
\Delta_k \leq (\Delta_{k-1} - \Delta_k) + b_1 (\Delta_{k-1} - \Delta_k)^{\frac{\theta}{1 - \theta}}, \forall k > \hat{l},
\end{equation}
where $b_1 = \frac{C L_c}{\theta \beta} (C L_c)^{\frac{\theta}{1 - \theta}}$. Noting that~\eqref{RPG:e105} has the same form as~\cite[(12)]{AB2009}, we can follow the same derivations in~\cite[Theorem~2]{AB2009} and show that (i) if $\theta = 1$, then Algorithm~\ref{RPG:a1} terminates in finite steps; (ii) if $\theta \in [\frac{1}{2}, 1)$, then $\Delta_k < C_r d^k$ for $C_r > 0$ and $d \in (0, 1)$; and (iii) if $\theta \in (0, \frac{1}{2})$, then $\Delta_k < \tilde{C}_r k^{\frac{1}{1 - 2 \theta}}$ for $\tilde{C}_r > 0$. It only remains to show that $\dist(x_k, x_*) < C_p \Delta_k$ for a positive constant $C_p$. This can be obtained by
\[
\dist(x_k, x_*) \leq \sum_{i = k}^{\infty} \dist(x_k, x_{k+1}) \leq \kappa \sum_{i = k}^{\infty} \|\eta_{x_k}^*\|_{x_k} = \kappa \Delta_k,
\]
where the first inequality is by triangle inequality and the second inequality is from Lemma~\ref{RPG:le16}. This completes the proof.
\end{proof}

\section{Operations on Stiefel Manifold}\label{app1}

\subsection{Differentiated Retraction of the retraction by polar decomposition}

The retraction by polar decomposition is given by
\begin{equation} \label{RPG:e107}
R_X(\eta_X) = (X + \eta_X) (I_p + \eta_X^T \eta_X)^{-1/2}.
\end{equation}
The vector transport by differentiated the retraction~\eqref{RPG:e107} is given in~\cite[Lemma~10.2.1]{HUANG2013} by
\begin{equation} \label{RPG:e108}
\mathcal{T}_{\eta_X} \xi_X = Y \Omega + (I_n - Y Y^T) \xi_X (Y^T (X + \eta_X))^{-1},
\end{equation}
where $Y = R_X(\eta_X)$ and $\Omega$ is the solution of the Sylvester equation $(Y^T(X + \eta_X)) \Omega + \Omega (Y^T(X + \eta_X)) = Y^T \xi_X - \xi_X^T Y$.

\subsection{Inverse Differentiated Retraction of the retraction by polar decomposition}

\begin{lemma} \label{app:le2}
The inverse differentiated retraction of~\eqref{RPG:e107} is
\begin{equation}
\mathcal{T}_{{\eta_X}}^{-1} \zeta_Y = YA + P,
\end{equation}
where $Y = R_X(\eta_X)$, $P = (I_n - Y Y^T) \zeta_Y (Y^T(X + \eta_X))$ and $A$ is the solution of the Sylvester equation $X^T Y A + A Y^T X = [(Y^T \zeta_Y) (Y^T (X + \eta_X)) + (Y^T (X + \eta_X)) (Y^T \zeta_Y)] Y^T X - X^T P - P^T X$.
\end{lemma}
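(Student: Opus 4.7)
The plan is to invert the formula \eqref{RPG:e108} by imposing $\mathcal{T}_{\eta_X}\xi_X = \zeta_Y$ and solving for $\xi_X$ in two steps: first determining its component along $Y$ and its component orthogonal to $Y$ directly from the equation, then enforcing the tangent-space constraint $X^T \xi_X + \xi_X^T X = 0$ at $X$.

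First, I would use two preliminary identities for the polar retraction. Writing $N = X + \eta_X$, the polar decomposition gives $Y = N (N^T N)^{-1/2}$, so $Y^T N = (N^T N)^{1/2}$ is symmetric positive definite; and since $\eta_X \in \T_X\St(p,n)$, one has $N^T N = I_p + \eta_X^T \eta_X$. With these in hand, left-multiplying \eqref{RPG:e108} by $Y^T$ and using $Y^T Y = I_p$ together with $Y^T(I_n - YY^T) = 0$, the equation $\mathcal{T}_{\eta_X}\xi_X = \zeta_Y$ yields $\Omega = Y^T \zeta_Y$. Multiplying by $(I_n - YY^T)$ on the left then gives $(I_n - YY^T)\xi_X (Y^T N)^{-1} = (I_n - YY^T)\zeta_Y$, i.e., the component of $\xi_X$ orthogonal to the column span of $Y$ is precisely $P = (I_n - YY^T)\zeta_Y (Y^T N)$.

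This suggests the decomposition $\xi_X = Y A + P$ with $A := Y^T \xi_X \in \mathbb{R}^{p\times p}$, since $Y^T P = 0$. Substituting $\Omega = Y^T \zeta_Y$ into the original Sylvester equation for $\Omega$ and using $Y^T \xi_X = A$ gives the skew-symmetric constraint
\begin{equation*}
A - A^T = (Y^T N)(Y^T \zeta_Y) + (Y^T \zeta_Y)(Y^T N).
\end{equation*}
Next, imposing the tangent condition $X^T \xi_X + \xi_X^T X = 0$ on $\xi_X = YA + P$ produces
\begin{equation*}
X^T Y A + A^T Y^T X = - X^T P - P^T X.
\end{equation*}
Eliminating $A^T$ between these two identities gives exactly the Sylvester equation stated in the lemma,
\begin{equation*}
X^T Y A + A Y^T X = \bigl[(Y^T \zeta_Y)(Y^T N) + (Y^T N)(Y^T \zeta_Y)\bigr] Y^T X - X^T P - P^T X.
\end{equation*}

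The main technical point will be showing that this Sylvester equation admits a unique solution, which is equivalent to the spectra of $X^T Y$ and $-Y^T X$ being disjoint, or equivalently that $X^T Y$ has no purely imaginary spectrum paired with its own negative. I would argue this by noting that $X^T Y = (I_p + X^T \eta_X)(I_p + \eta_X^T \eta_X)^{-1/2}$, and on the domain where $R_X$ is a diffeomorphism (i.e., for $\eta_X$ in a neighborhood of $0_X$) this matrix is close to the identity, hence has spectrum in the right half plane, which ensures solvability. Once $A$ is obtained, reversing the construction shows that $\xi_X = YA + P$ lies in $\T_X \St(p,n)$ and satisfies $\mathcal{T}_{\eta_X}\xi_X = \zeta_Y$, proving the claim.
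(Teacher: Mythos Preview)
Your proposal is correct and follows essentially the same route as the paper: decompose $\xi_X = YA + P$ with $P$ orthogonal to the columns of $Y$, read off $P$ and $\Omega = Y^T\zeta_Y$ from \eqref{RPG:e108}, obtain the skew constraint on $A$ from the Sylvester equation defining $\Omega$, and combine it with the tangent-space condition $X^T\xi_X + \xi_X^T X = 0$ to arrive at the stated Sylvester equation for $A$. Your added discussion of unique solvability (via the spectrum of $X^T Y$ lying in the right half-plane for $\eta_X$ near $0_X$) is a point the paper does not address explicitly.
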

\begin{proof}
Let $\xi_X$ denote $\mathcal{T}_{{\eta_X}}^{-1} \zeta_Y$ and $Y_{\perp}$ denote a $n$-by-$(n-p)$ orthonormal matrix such that $[Y \, Y_{\perp}]^T [Y \, Y_{\perp}] = I_n$. Therefore, there exist matrices $A \in \mathbb{R}^{p \times p}$ and $P \in \mathbb{R}^{n \times (n - p)}$ such that $\xi_X = YA + P$, where $\mathrm{span}(P) \subseteq \mathrm{span}(Y_{\perp})$.

Substituting $\xi_X = YA + P$ into~\eqref{RPG:e108} and using $\mathcal{T}_{\eta_X} \xi_X = \zeta_Y$, we have
\[
\zeta_Y = Y \Omega + (I_n - Y Y^T) P (Y^T (X + \eta_X))^{-1}.
\]
It follows that $P = (I_n - Y Y^T) \zeta_Y (Y^T(X + \eta_X))$ and 
\begin{equation} \label{RPG:e109}
	A - A^T = (Y^T \zeta_Y) (Y^T (X + \eta_X)) + (Y^T (X + \eta_X)) (Y^T \zeta_Y).
\end{equation}
Since $\xi_X \in \T_X \St(p, n)$, we have $X^T \xi_X + \xi_X^T X = 0$. Combining this equation with~\eqref{RPG:e109} yields that
\[
X^T Y A + A Y^T X = [(Y^T \zeta_Y) (Y^T (X + \eta_X)) + (Y^T (X + \eta_X)) (Y^T \zeta_Y)] Y^T X - X^T P - P^T X.
\]

\end{proof}

\subsection{Adjoint of the Inverse Differentiated Retraction of the retraction by polar decomposition}

\begin{lemma} \label{app:le5}
The adjoint operator of the inverse differentiated retraction is
\[
\mathcal{T}_{\eta_X}^{- \sharp} \xi_X = Y [B (X^T Y) (Y^T(X + \eta_X)) + (Y^T (X + \eta_X)) B (X^T Y) ] - (I_n - Y Y^T) (X B + X B^T - \xi_X) (Y^T (X + \eta_X)),
\]
where $B$ is the solution of the Sylvester equation $ Y^T X B + B X^T Y = Y^T \xi_X$.
\end{lemma}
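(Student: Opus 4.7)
The plan is to establish the formula by a direct algebraic computation starting from the defining relation of the adjoint. Let me denote $N = Y^T X$ and $M = Y^T(X+\eta_X)$ for brevity. A key preliminary observation is that, since $\eta_X \in \T_X\St(p,n)$ satisfies $X^T\eta_X + \eta_X^T X = 0$, direct computation gives $M = (I_p + \eta_X^T\eta_X)^{1/2}$, so $M = M^T$; this symmetry will be used repeatedly in the trace manipulations. Under the Euclidean metric~\eqref{RPG:stiemetric}, I need to verify that for every $\xi_X\in\T_X\St(p,n)$ and every $\zeta_Y\in\T_Y\St(p,n)$, $\langle \xi_X, \mathcal{T}_{\eta_X}^{-1}\zeta_Y\rangle = \langle \mathcal{T}_{\eta_X}^{-\sharp}\xi_X, \zeta_Y\rangle$, where the proposed adjoint is the right-hand side of the claim. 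Substituting the formula $\mathcal{T}_{\eta_X}^{-1}\zeta_Y = YA + P$ from Lemma~\ref{app:le2} and splitting the inner product, I obtain $\langle \xi_X, \mathcal{T}_{\eta_X}^{-1}\zeta_Y\rangle = \langle Y^T\xi_X, A\rangle + \langle \xi_X, P\rangle$, and I will handle the two pieces separately.

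The key trick for the first piece is to exploit the adjoint of the Sylvester operator. Define $S_1(A) = X^T Y A + A Y^T X$, which is the linear operator appearing in the Sylvester equation from Lemma~\ref{app:le2}. Its adjoint with respect to the Frobenius inner product is $S_1^\ast(B) = Y^T X B + B X^T Y$. If $B$ is defined as the unique solution of $S_1^\ast(B) = Y^T\xi_X$ (which is precisely the Sylvester equation stated in the lemma), then $\langle Y^T\xi_X, A\rangle = \langle S_1^\ast(B), A\rangle = \langle B, S_1(A)\rangle = \langle B, C(\zeta_Y)\rangle$, where $C(\zeta_Y) = [(Y^T\zeta_Y)M + M(Y^T\zeta_Y)]N - X^T P(\zeta_Y) - P(\zeta_Y)^T X$ is the right-hand side of the Sylvester equation determining $A$, and $P(\zeta_Y) = (I_n - YY^T)\zeta_Y M$.

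The remaining step is to expand $\langle B, C(\zeta_Y)\rangle + \langle \xi_X, P(\zeta_Y)\rangle$ term by term, using the cyclic property of the trace and $M = M^T$, to rewrite everything as $\langle W, \zeta_Y\rangle$ for a matrix $W$ which will then be read off as the formula for $\mathcal{T}_{\eta_X}^{-\sharp}\xi_X$. I expect each of the five pieces to contribute exactly one term of the claimed answer: the two parts of $[(Y^T\zeta_Y)M + M(Y^T\zeta_Y)]N$ yield the two summands inside the bracket of $Y\bigl[B(X^TY)(Y^T(X+\eta_X)) + (Y^T(X+\eta_X))B(X^TY)\bigr]$; the $-X^T P$ and $-P^T X$ contributions produce $-(I_n - YY^T)XB(Y^T(X+\eta_X))$ and $-(I_n - YY^T)XB^T(Y^T(X+\eta_X))$, respectively; and the remaining $\langle \xi_X, P\rangle$ term gives $+(I_n - YY^T)\xi_X(Y^T(X+\eta_X))$. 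Collecting these completes the computation.

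The main obstacle is the careful bookkeeping in the trace manipulations, where for each term I must move matrices around by cyclicity to isolate $\zeta_Y$ on the right and then transpose the remaining block to identify the coefficient. A secondary subtlety is that the closed-form expression lies in $\T_Y\St(p,n)$ only modulo the normal space $\{YS : S = S^T\}$; however, because any $\zeta_Y \in \T_Y\St(p,n)$ satisfies $Y^T\zeta_Y + \zeta_Y^T Y = 0$, so $Y^T\zeta_Y$ is skew-symmetric, pairing any element $YS$ of the normal space with such a $\zeta_Y$ yields $\trace(S^T Y^T \zeta_Y) = 0$. Consequently the identity on inner products against arbitrary $\zeta_Y \in \T_Y\St(p,n)$ is exactly what is needed to establish the lemma.
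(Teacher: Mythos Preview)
Your proposal is correct and follows essentially the same route as the paper: start from $\langle \xi_X, \mathcal{T}_{\eta_X}^{-1}\zeta_Y\rangle = \langle Y^T\xi_X, A\rangle + \langle \xi_X, P\rangle$, use the adjoint of the Sylvester operator to replace $\langle Y^T\xi_X, A\rangle$ by $\langle B, C(\zeta_Y)\rangle$, and then expand each piece via trace cyclicity to isolate $\zeta_Y$. Your explicit remark that $M = Y^T(X+\eta_X) = (I_p+\eta_X^T\eta_X)^{1/2}$ is symmetric is a useful clarification (the paper uses it tacitly when dropping transposes in the final formula), and your closing observation about the normal-space ambiguity is a nice sanity check not spelled out in the paper.
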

\begin{proof}
For any $\xi_X \in \T_X \St(p, n)$, we have
\begin{align*}
\inner[]{\xi_X}{\mathcal{T}_{{\eta_X}}^{-1} \zeta_Y}	 =& \inner[]{\xi_X}{Y A + P} = \inner[]{\xi_X}{Y A + (I_n - Y Y^T) \zeta_Y (Y^T(X + \eta_X))} \\
=& \inner[]{Y^T \xi_X}{A} + \inner[]{(I_n - Y Y^T) \xi_X (Y^T(X + \eta_X))^T }{ \zeta_Y } \\
=& \inner[]{B}{ [(Y^T \zeta_Y) (Y^T (X + \eta_X)) + (Y^T (X + \eta_X)) (Y^T \zeta_Y)] Y^T X - X^T P - P^T X } \\
&+ \inner[]{(I_n - Y Y^T) \xi_X (Y^T(X + \eta_X))^T }{ \zeta_Y } \\
=& \inner[]{Y B (Y^T X)^T (Y^T (X + \eta_X)))^T}{\zeta_Y} + \inner[]{Y (Y^T (X + \eta_X)))^T B (Y^T X)^T }{\zeta_Y} \\
&- \inner[]{ (I_n - Y Y^T) X (B + B^T) (Y^T (X + \eta_X))^T }{ \zeta_Y } + \inner[]{(I_n - Y Y^T) \xi_X (Y^T(X + \eta_X))^T }{ \zeta_Y } 
\end{align*}
where $B$ is the solution of the Sylvester equation $ Y^T X B + B X^T Y = Y^T \xi_X$. The conclusion follows from the above equation.
\end{proof}

\section{Solution of a Proximal Subproblem on Unit Sphere}

\begin{lemma} \label{RPG:le14}
For any $x \in \mathbb{R}^n$ and $\lambda > 0$, the minimizer of the  optimization problem
\begin{equation} \label{RPG:e62}
\min_{y\in\mathbb{S}^{n-1}} \frac{1}{2\lambda} \|y- x\|_2^2 + \|y\|_1
\end{equation}
is given by
\begin{equation}
y_* =
\left\{
\begin{array}{ll}
\frac{z}{\|z\|_2},  & \hbox{ if $\|z\|_2 \neq 0$; } \\
\sign(x_{i_{\max}}) e_{i_{\max}} & \hbox{ otherwise, }
\end{array}
\right.
\end{equation}
where $i_{\max}$ is the index of the largest magnitude entry of $x$ (break ties arbitrarily), $e_i$ denotes the $i$-th column in the canonical basis of $\mathbb{R}^n$, and $z$ is defined by
$$
z_i =
\left\{
\begin{array}{ll}
0 & \hbox{ if $|x_i| \leq \lambda$; } \\
x_i - \lambda & \hbox{ if $x_i > \lambda$; } \\
x_i + \lambda & \hbox{ if $x_i < - \lambda$.} \\
\end{array}
\right.
$$
\end{lemma}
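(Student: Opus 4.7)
My plan is to first reduce the problem to a simpler form by using the constraint $\|y\|_2 = 1$. Expanding gives $\frac{1}{2\lambda}\|y-x\|_2^2 = \frac{1}{2\lambda} - \frac{1}{\lambda} x^T y + \frac{1}{2\lambda}\|x\|_2^2$, so up to an additive constant and the positive factor $1/\lambda$, problem~\eqref{RPG:e62} is equivalent to
\[
\min_{y \in \mathbb{S}^{n-1}}\; \varphi(y) := -x^T y + \lambda \|y\|_1 = \sum_{i=1}^n \bigl(-x_i y_i + \lambda |y_i|\bigr).
\]
Thus I only need to show that the candidate $y_*$ minimizes $\varphi$ on the unit sphere.

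Next I would establish a coordinate-wise inequality relating $\varphi$ to the soft-thresholded vector $z$: for each $i$,
\[
-x_i y_i + \lambda |y_i| \;\geq\; -z_i y_i,
\]
by checking the three cases $|x_i| \leq \lambda$, $x_i > \lambda$, $x_i < -\lambda$ directly from the definition of $z$. Summing over $i$ gives $\varphi(y) \geq -z^T y$ for every $y$, and equality holds precisely when, coordinate by coordinate, $y_i$ agrees in sign with $x_i$ whenever $|x_i| > \lambda$, and $y_i = 0$ whenever $|x_i| < \lambda$ (with $y_i$ free when $|x_i| = \lambda$).

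For the main case $\|z\|_2 \neq 0$, I would then apply Cauchy-Schwarz: $-z^T y \geq -\|z\|_2 \|y\|_2 = -\|z\|_2$, with equality iff $y = z/\|z\|_2$. The candidate $y_* = z/\|z\|_2$ achieves equality in both the coordinate-wise inequality (since $y_{*,i} = 0$ exactly when $z_i = 0$, and otherwise shares the sign of $z_i$, hence of $x_i$) and in Cauchy-Schwarz, so it is the minimizer.

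For the degenerate case $\|z\|_2 = 0$, i.e., $|x_i| \leq \lambda$ for every $i$, the bound $-z^T y = 0$ is not informative. Instead I would use
\[
\varphi(y) \geq \sum_i (\lambda - |x_i|)|y_i| \geq (\lambda - \|x\|_\infty)\|y\|_1 \geq \lambda - |x_{i_{\max}}|,
\]
where the last step uses $\|y\|_1 \geq \|y\|_2 = 1$ together with $\lambda - \|x\|_\infty \geq 0$. A direct computation shows that $y_* = \mathrm{sign}(x_{i_{\max}}) e_{i_{\max}}$ attains this lower bound, which concludes the proof. The only mildly delicate point is book-keeping in the equality cases (especially when some $|x_i|$ equals $\lambda$ exactly, or when the maximizer $i_{\max}$ is not unique), but these do not affect optimality of the stated closed-form expression.
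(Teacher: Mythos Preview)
Your proof is correct. Both you and the paper begin with the same reduction: on $\mathbb{S}^{n-1}$ the quadratic expands to a constant plus $-\frac{1}{\lambda}x^Ty$, so the problem becomes $\min_{y\in\mathbb{S}^{n-1}} -x^Ty + \lambda\|y\|_1$. In the degenerate case $\|z\|_2=0$ (equivalently $\|x\|_\infty\le\lambda$) your argument and the paper's are essentially identical: bound $-x_iy_i\ge -|x_i||y_i|$, pull out $(\lambda-\|x\|_\infty)$, and use $\|y\|_1\ge\|y\|_2=1$.

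Where you diverge is in the main case $\|z\|_2\neq 0$. The paper argues via first-order optimality on the sphere: any critical point satisfies $cy = x - \lambda\nabla\|y\|_1$ for some scalar $c$, and then splits into $c=0$, $c>0$, $c<0$, enumerating the resulting candidates and ruling out the spurious ones by a separate sign argument (the global minimizer must satisfy $(y_*)_ix_i\ge 0$ entrywise). Your route is purely variational: the coordinate-wise inequality $-x_iy_i+\lambda|y_i|\ge -z_iy_i$ combined with Cauchy--Schwarz gives the sharp lower bound $\varphi(y)\ge -\|z\|_2$ directly, and equality at $y_*=z/\|z\|_2$ is immediate. This is shorter and avoids classifying critical points; the paper's approach, on the other hand, makes the full critical-point structure visible. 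Your equality-condition bookkeeping is slightly loose (when $|x_i|=\lambda$, equality requires $y_i$ to have the sign of $x_i$ or be zero, not be entirely free), but as you note this does not affect the verification that $y_*$ attains the bound.
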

\begin{proof} Since $\|y\|_2=1$ for any $y\in\mathbb{S}^{n-1}$, the optimization problem \eqref{RPG:e62} is equivalent to
\begin{align}\label{eq:ke004}
\min_{y\in\mathbb{S}^{n-1}} u(y), \hbox{ where }u(y)= -\frac{1}{\lambda}y^Tx+\|y\|_1.
\end{align}
The subdifferential of the cost function in~\eqref{eq:ke004}, denoted by $\partial u(y)$, is
\begin{equation}
\partial u(y) = - \frac{1}{\lambda}x + \partial \|y\|_1,
\end{equation}
where $\partial \|y\|_1$ is given by
\begin{equation*}
\left(\partial \|y\|_1\right)_i =
\left\{
\begin{array}{ll}
 1 & \hbox{ if ${y}_i > 0$}; \\
- 1 & \hbox{ if ${y}_i < 0$}; \\
\hbox{$[-1, 1]$} & \hbox{ if ${y}_i = 0$}. \\
\end{array}
\right.
\end{equation*}
Assume $y$ is a critical point for \eqref{eq:ke004}. By the first order optimality condition on the unit sphere, there exists a subgradient at $y$, denoted $\nabla u(y)$, such that $\nabla u(y)$ is a multiple of $y$. In other words, there exists a constant $c$ such that
\begin{equation} \label{RPG:e63}
c y = x - \lambda \nabla \|y\|_1,
\end{equation}
where $\nabla \|y\|_1$ denotes a subgradient of $\|\cdot\|_1$ at $y$.

\paragraph{Case 1:} $\|x\|_\infty > \lambda$, where $\|x\|_\infty = \max_i(|x_i|)$.

If $c = 0$, then equation~\eqref{RPG:e63} can not hold due to the assumption that $\|x\|_\infty > \lambda$. If $c > 0$, then the corresponding critical point $y_*$ is unique and can be expressed as
\begin{equation} \label{RPG:e64}
y_* = z / \|z\|_2,
\end{equation}
where
$$
z_i =
\left\{
\begin{array}{ll}
0 & \hbox{ if $|x_i| \leq \lambda$; } \\
x_i - \lambda & \hbox{ if $x_i > \lambda$; } \\
x_i + \lambda & \hbox{ if $x_i < - \lambda$.} \\
\end{array}
\right.
$$
If $c < 0$, then there exist multiple critical points, which can be expressed as $v_* = w / \|w\|_2$, where
$$
w_i =
\left\{
\begin{array}{ll}
0 \hbox{ or } (- x_i + \lambda) \hbox{ or } (- x_i - \lambda) & \hbox{ if $|x_i| \leq \lambda$; } \\
- x_i - \lambda & \hbox{ if $x_i > \lambda$; } \\
- x_i + \lambda & \hbox{ if $x_i < - \lambda$.} \\
\end{array}
\right.
$$
One can easily verify that the global minimizer $y_*$ of~\eqref{RPG:e62} must have the same signs as $x$ in the sense that $(y_*)_i x_i \geq 0$ for all $i$. Otherwise, let us define
$$
\tilde{y}_* =
\begin{bmatrix}
(y_*)_1 & (y_*)_2 & \ldots & (y_*)_{j-1} & -(y_*)_j & (y_*)_{j+1} & \ldots, (y_*)_n
\end{bmatrix},
$$
where $(y_*)_j x_j < 0$. It follows that $u(\tilde{y}_*) < u(y_*)$, which conflicts with the global minimizer assumption of $y_*$. The only critical point that has the same sign as $x$ is $y_*$ in~\eqref{RPG:e64}. Therefore, $y_*$ is the global minimizer.

\paragraph{Case 2:}$\|x\|_\infty \leq \lambda$.

In this case we have
\begin{align*}
u(y) =& - \frac{1}{\lambda} y^T x + \|y\|_1 \geq - \frac{1}{\lambda} \|x\|_{\infty} \|y\|_1 + \|y\|_1 = \frac{1}{\lambda} \left( \lambda - \|x\|_\infty \right) \|y\|_1 \\
\geq& \frac{1}{\lambda} \left( \lambda - \|x\|_\infty \right) \|y\|_2 = \frac{1}{\lambda} \left( \lambda - \|x\|_\infty \right),
\end{align*}
where the equality  holds if $y = \sign(x_{i_{\max}}) e_{i_{\max}}$. Therefore, $\sign(x_{i_{\max}}) e_{i_{\max}}$ is a global minimizer.
\end{proof}

\end{document}